\theoremstyle{plain}
\newtheorem{thm}{Theorem}[section]
\newtheorem{lem}[thm]{Lemma}
\newtheorem{cor}[thm]{Corollary}
\newtheorem{prop}[thm]{Proposition}
\theoremstyle{definition}
\newtheorem{defn}[thm]{Definition}
\newtheorem{rem}[thm]{Remark}
\title[On a quadratic estimate]
{On a quadratic estimate related to  the Kato conjecture and boundary value problems}
\author{Pascal Auscher} \author{Andreas Axelsson} \author{Alan M{\lowercase {c}}Intosh}
\address{Pascal Auscher, Universit\'e de Paris-Sud, UMR du CNRS 8628, 91405 Orsay Cedex, France}
\email{pascal.auscher@math.u-psud.fr}
\address{Andreas Axelsson, Matematiska institutionen, Stockholms universitet, 106 91 Stockholm, Sweden}
\email{andax@math.su.se}
\address{Alan McIntosh, Centre for Mathematics and its Applications, Mathematical Sciences Institute, Australian National University, Canberra ACT 0200, Australia}
 \email{alan.mcintosh@anu.edu.au}
\newcommand{\qedend}{ }
\mathchardef\semic="303B
\newcommand{\wedg}{\mathbin{\scriptstyle{\wedge}}}
\newcommand{\R}{{\mathbf R}}
\newcommand{\C}{{\mathbf C}}
\newcommand{\Z}{{\mathbf Z}}
\newcommand{\N}{{\mathbf N}}
\newcommand{\mH}{{\mathcal H}}
\newcommand{\mC}{{\mathcal C}}
\newcommand{\mL}{{\mathcal L}}
\newcommand{\mD}{{\mathcal D}}
\DeclareMathOperator{\re}{Re}
\newcommand{\brac}[1]{\langle #1 \rangle}
\newcommand{\supp}{\text{{\rm supp}}\,}
\newcommand{\dist}{\text{{\rm dist}}\,}
\newcommand{\nul}{\textsf{N}}
\newcommand{\ran}{\textsf{R}}
\newcommand{\dom}{\textsf{D}}
\newcommand{\clos}[1]{\overline{#1}}
\newcommand{\dyadic}{\triangle}
\newcommand{\sgn}{\text{{\rm sgn}}}
\newcommand{\barint}{\mbox{$ave \int$}}
\newcommand{\divv}{{\text{{\rm div}}}}
\newcommand{\curl}{{\text{{\rm curl}}}}
\newcommand{\ud}{\underline{d}}
\newcommand{\hut}[1]{\check #1}
\newcommand{\ta}{{\scriptscriptstyle \parallel}}
\newcommand{\no}{{\scriptscriptstyle\perp}}
\newcommand{\oB}{{\overline B}}
\newcommand{\uB}{{\underline B}}
\newcommand{\qe}[1]{\int_0^\infty\|#1\|^2\,\frac{dt}t}
\newcommand{\qeinf}[2]{\int_{#2}^\infty\|#1\|^2\,\frac{dt}t}
\newcommand{\qezero}[2]{\int^{#2}_0\|#1\|^2\,\frac{dt}t}
\def\barint_#1{\mathchoice
            {\mathop{\vrule width 6pt
height 3 pt depth -2.5pt
                    \kern -8.8pt
\intop}\nolimits_{#1}}%
            {\mathop{\vrule width 5pt height
3 pt depth -2.6pt
                    \kern -6.5pt
\intop}\nolimits_{#1}}%
            {\mathop{\vrule width 5pt height
3 pt depth -2.6pt
                    \kern -6pt
\intop}\nolimits_{#1}}%
            {\mathop{\vrule width 5pt height
3 pt depth -2.6pt
          \kern -6pt \intop}\nolimits_{#1}}}
\DeclareMathOperator{\I}{I}
\definecolor{gr}{rgb}   {0.,   0.8,   0. } 
\definecolor{bl}{rgb}   {0.,   0.5,   1. } 
\definecolor{mg}{rgb}   {0.7,  0.,    0.7}
\begin{document}

\begin{abstract} We provide a direct proof of a quadratic estimate that plays a central role in the determination of domains of square roots of elliptic operators and, as   shown  more recently, in some boundary value problems with $L^2$ boundary data.   We develop the application to the Kato conjecture and to a Neumann problem. This quadratic estimate enjoys some equivalent forms in various settings.  This  gives new results in the  functional calculus of Dirac type operators on forms.  
\end{abstract}
\maketitle
%\tableofcontents

\subjclass{MSC classes: 35J25, 35J55, 47N20, 47F05, 42B25}

\subjclass{Keywords: Littlewood-Paley estimate, functional calculus, boundary value problems, second order elliptic equations and systems, square root problem}

\section{Introduction}

The  goal of this paper is  first  to present  a self-contained and simple proof of the following quadratic estimate,  and second,  to convince the reader that this is a central estimate in this area.

\begin{thm}\label{th:main}  Let $n,m$ be positive integers, $\mH= L^2(\R^n , \C^m)$ and  $D, B$ be operators  on $\mH$ satisfying the requirements ${\mathrm(H)}$. Then one has the quadratic estimate
\begin{equation}  \label{eq:sfBD}
  \int_0^\infty\| t^kBD(I+t^{2k}BDBD)^{-1} u \|^2 \, \frac{dt}t \lesssim \|u\|^2, \qquad  \text{for all }\ u\in \mH.
\end{equation}
 \end{thm}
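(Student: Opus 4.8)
\emph{Reductions.} A change of variables reduces \eqref{eq:sfBD} to the case $k=1$, so take $k=1$, write $\Theta_t:=tBD(I+t^2BDBD)^{-1}$, and prove $\qe{\Theta_t u}\lesssim\|u\|^2$. Since $Du=0$ forces $\Theta_t u=0$, and $\mH=\nul D\oplus\clos{\ran D}$ by self-adjointness of $D$, it suffices to treat $u\in\clos{\ran D}$. The point of departure is that $D$ is self-adjoint, so the \emph{unperturbed} estimate $\qe{tD(I+t^2D^2)^{-1}u}\simeq\|u\|^2$ on $\clos{\ran D}$ is immediate from the spectral theorem; the whole task is to transfer it to the non-self-adjoint operator $BD$. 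From ${\mathrm(H)}$ I will use only: the uniform bounds $\|(I+t^2(BD)^2)^{-1}\|+\|\Theta_t\|\lesssim1$; the off-diagonal (Gaffney) decay of these resolvent operators; the fact that $D$ is a first order differential operator, so that $[D,\eta I]$ is a bounded multiplication operator with norm $\lesssim\|\nabla\eta\|_\infty$ (hence localisation and finite propagation speed); and the accretivity of $B$ on $\clos{\ran D}$.

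\emph{Extracting the principal part.} Let $E_t$ be a dyadic averaging operator, replacing a function by its mean over the dyadic cube of side-length $\simeq t$ through a given point. Using the Gaffney bounds, define the \emph{principal part} $\gamma_t$, a bounded $\mathcal L(\C^m)$-valued function with $\|\gamma_t\|_\infty\lesssim1$, by $\gamma_t(x)e:=(\Theta_t\tilde e)(x)$ for $e\in\C^m$, where $\tilde e\equiv e$ and the right-hand side is read off as $\lim_{R\to\infty}(\Theta_t(\chi_{B(x,R)}e))(x)$. Putting $(\gamma_t E_t u)(x):=\gamma_t(x)(E_t u)(x)$, split
\begin{equation*}
 \qe{\Theta_t u}\ \le\ 2\,\qe{\Theta_t(I-E_t)u}\ +\ 2\,\qe{\Theta_t E_t u-\gamma_t E_t u}\ +\ 2\,\qe{\gamma_t E_t u}.
\end{equation*}
The first term is handled by a quasi-orthogonality argument (``Schur with Littlewood--Paley''): $\Theta_t$ carries a factor $tD$ and kills locally constant functions, $(I-E_t)$ removes local means (Poincar\'e), and combined with the Gaffney decay and the unperturbed square-function estimate this gives $\lesssim\|u\|^2$. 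The middle term is $\lesssim\|u\|^2$ by a similar, purely off-diagonal, estimate. Thus Theorem~\ref{th:main} reduces to the Carleson measure bound $\iint_{Q\times(0,\ell(Q))}\|\gamma_t(x)\|^2\,dx\,dt/t\lesssim|Q|$ for every cube $Q$: granting this, Carleson's embedding theorem, the pointwise estimate $\sup_{t>0}|(E_t u)(x)|\lesssim\mathcal M u(x)$ (with $\mathcal M$ the Hardy--Littlewood maximal operator) and $\|\mathcal M u\|_2\lesssim\|u\|_2$ give $\qe{\gamma_t E_t u}\le\iint|(E_t u)(x)|^2\|\gamma_t(x)\|^2\,dx\,dt/t\lesssim\|u\|^2$.

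\emph{The Carleson estimate — the crux.} Fix a cube $Q$ of side-length $\ell$ and set $R_Q:=Q\times(0,\ell)$. Choose once and for all finitely many unit vectors $w_1,\dots,w_N\in\C^m$ with $\|T\|\lesssim\max_j|Tw_j|$ for all $T\in\mathcal L(\C^m)$; it then suffices to bound $\iint_{R_Q}|\gamma_t(x)w|^2\,dx\,dt/t\lesssim|Q|$ for each $w=w_j$. I would attempt this by a local $T(b)$ argument with test functions adapted to $Q$, namely $f^w_Q:=(I+i\ell BD)^{-1}(\chi_{5Q}w)$, which lie in $\dom(BD)$ with $\|f^w_Q\|+\ell\|BDf^w_Q\|\lesssim|Q|^{1/2}$ and whose averages over $Q$ (and over suitable subcubes) stay close to $w$ — this last point being exactly where accretivity of $B$ is used. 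One has the algebraic identity, for $x\in Q$,
\begin{equation*}
\gamma_t(x)w-\Theta_t f^w_Q(x)\ =\ \Theta_t(E_t-I)f^w_Q(x)\ +\ \bigl(\gamma_t(x)(E_tf^w_Q)(x)-\Theta_t E_t f^w_Q(x)\bigr)\ +\ \gamma_t(x)\bigl(w-(E_tf^w_Q)(x)\bigr),
\end{equation*}
and one estimates the three terms on the right over $R_Q$: the first two are $\lesssim|Q|$ by the principal-part reduction of the previous paragraph applied to $f^w_Q$ (costing only $\|f^w_Q\|^2\lesssim|Q|$, and not circular, having used only the unperturbed estimate and the Gaffney bounds), while the third is controlled by a \emph{stopping-time} decomposition of $Q$ into maximal dyadic subcubes where the average of $f^w_Q$ first departs from $w$ (or where an auxiliary average of $\gamma$ first becomes large); accretivity of $B$ forces these stopping cubes to carry only a fixed fraction $1-\varepsilon_0$ of $|Q|$, so that on the complementary sawtooth $E_tf^w_Q\approx w$ while the bad part is summed as a geometric series via this packing.

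\emph{The main obstacle} is the leftover term $\iint_{R_Q}|\Theta_t f^w_Q(x)|^2\,dx\,dt/t$, which one must \emph{not} estimate by the square-function bound being proved. I expect this to be the delicate technical core, and would resolve it by the classical absorption scheme: work throughout with the truncated integrals $\int_\varepsilon^{1/\varepsilon}(\cdots)\,dt/t$, for which all the above quantities are a priori finite because $\|\Theta_t\|\lesssim1$; arrange the stopping time so that the good (sawtooth) contribution, including this term restricted to the sawtooth, is controlled by a fixed multiple of $|Q|$ using only the soft reductions and the test-function bounds, whereas the bad (stopping-cube) contribution is at most $(1-\varepsilon_0)$ times the same truncated Carleson constant; solving the resulting linear inequality bounds that constant independently of $\varepsilon$, and letting $\varepsilon\to0$ finishes the proof. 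Verifying that the sawtooth estimate is genuinely non-circular, and that accretivity really delivers the packing constant $\varepsilon_0$, is where the work lies.
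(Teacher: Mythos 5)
Your overall architecture (principal part approximation, reduction to a Carleson measure, local $T(b)$ with a stopping time and an absorption inequality) is the same as the paper's, but the crux --- the Carleson estimate --- has two genuine gaps, and they are precisely the historical difficulties of this problem. First, the test functions. Your $f^w_Q=(I+i\ell BD)^{-1}(\chi_{5Q}w)$ does not lie in $\ran(D)$: the resolvent acts as the identity on $\nul(BD)=\nul(D)$, so $f^w_Q$ keeps the null-space component of $\chi_{5Q}w$. But the principal-part approximation you then apply to $f^w_Q$ is only available for functions in $\ran(D)$ --- your own first reduction, bounding $\qe{\Theta_t(I-E_t)f}$, works by writing $f=Dv$, pulling the factor $tD$ through, and invoking (H3); that step has no analogue off $\clos{\ran(D)}$. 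Accordingly only the restriction of $\gamma_t(x)$ to the subspace $\mD=\{DL\}\subset\C^m$ can be (and need be) shown Carleson, and the test function for $w=DL\in\mD$ must be manufactured inside $\ran(D)$ as $D(I+i(\epsilon\ell)^kBD)^{-1}(\eta_QL)$ with $\eta_Q$ a smooth cutoff: then $BD(\eta_QL)$ is a bounded function supported in $3Q$, whereas $D(\chi_{5Q}w)$ is a surface measure, so your estimate of $|\,\text{ave}_Q f^w_Q-w|$ cannot even be started by integration by parts. The auxiliary small parameter $\epsilon$ (resolvent at scale $\epsilon\ell$, not $\ell$) is also indispensable: at scale $\ell$ the average of the test function over $Q$ differs from $w$ by $O(1)$, which is useless for absorption; one needs $O(\sqrt\epsilon)$ at the price of a Carleson bound $\epsilon^{-2k}|Q|$ for $\iint|\Theta_tb|^2\,dxdt/t$.

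Second, and more fundamentally, the absorption fails for a fixed finite norming set $\{w_j\}$. The stopping time can only guarantee $|w-E_tf^w_Q(x)|\lesssim\epsilon^{-1/2}$ on the sawtooth (its square average over $Q$ is merely bounded), so the error term $|\gamma_t(x)(w-E_tf^w_Q(x))|\lesssim\epsilon^{-1/2}|\gamma_t(x)|$ returns with a constant much larger than $1$ times the very quantity you are estimating, and nothing absorbs it. The cure is the conical decomposition of the matrix space $\mL(\mD,\C^m)$ into sectors $\{\kappa:|\kappa-|\kappa|\gamma|\le\nu|\kappa|\}$ together with a dual vector $w^*$ normalised by $(\gamma w,w^*)=1$: then only the scalar quantity $\re(\gamma(w-S_tb),w^*)$ needs to be small ($\lesssim\sqrt\epsilon$, enforceable by a stopping condition on that real part), while the large term $\epsilon^{-1/2}$ is multiplied by the aperture $\nu$, chosen \emph{after} $\epsilon$. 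Your alternative stopping rule (``an auxiliary average of $\gamma$ becomes large'') does not substitute for this. Two lesser remarks: the term $\iint_{R_Q}|\Theta_tf^w_Q|^2\,dxdt/t$ that you single out as the main obstacle is in fact the easy part (resolvent algebra gives $\|\Theta_tb^w_{Q,\epsilon}\|\lesssim(t/\epsilon\ell)^k$, hence a direct, non-circular bound $C\epsilon^{-2k}|Q|$); and the claimed ``change of variables to $k=1$'' only renames the integral --- (H1), (H3), the order-$k$ Poincar\'e inequality and the degree-$k$ polynomials $L$ in the test functions all depend genuinely on $k$, so the higher-order case is not reduced to the first-order one.
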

 
One uses $(\, , \, )$ and $\|\, \|$ for the hermitian product and norm on $\mH$. The hypotheses ({\textrm H}) consist of the following set of requirements. 
 \begin{itemize}
 \item[(H1)] The operator $D:\dom(D) \longrightarrow \mH$ is a  homogeneous $k$th order differential operator with constant coefficients. 
 \item[(H2)]  $D$ is self-adjoint.
\item[(H3)]  $D$ is strictly accretive on its range, i.e.
$$
 \|\nabla^k  u\| \lesssim \|Du\|  , \quad  \text{for all }\ u \in \dom(D) \cap \clos{\ran(D)}.
$$
\item[(H4)]  $B$ is a bounded  operator on $\mH$.
\item[(H5)] $B$ is strictly accretive on $\ran(D)$:  there is a constant $\delta>0$ such that 
$$
\re ( B Du, Du )\ \ge  \delta  \|Du\|^2, \quad   \text{for all } \ u \in \dom(D).
$$
\item[(H6)] (Off-diagonal decay)
 For every integer $N$ there exists $C_N>0$
such that
\begin{equation} \label{odn}
\|t^kBD(I+t^{2k}BDBD)^{-1} u\|_{L^2(E)} \le C_N \brac{\dist (E,F)/t}^{-N}\|u\|
\end{equation}
for all $t>0$, 
whenever $E,F \subset \R^n$ are closed sets,  $u \in \mH$
satisfies $\supp u\subset F$.  We have set $\brac x:=1+|x|$, and
$\dist(E,F) :=\inf\{|x-y|:x\in E,y\in F\}$.

\end{itemize}

\medskip

In this paper, if $A$ is a (densely defined) unbounded linear operator on $\mH$ then $\dom(A), \nul(A), \ran(A)$ denote respectively, its domain, null space and range.  In (H3),  $\nabla^k  u=(\partial^\alpha u_j)_{|\alpha|=k,1\leq j\leq m}$ consists of all the partial derivatives of $u$ of order $k$. 
 The assumptions (H2,4,5)  imply that $BD$ has  spectrum contained in a double sector of the complex plane centered around $\R$ and   give boundedness of the operator in (H6) (See Proposition \ref{prop:typeomega}).    The constant  in \eqref{eq:sfBD}  depends on the implicit constants in ({\textrm H}).  

 We mention right away that  our interest is in operators $B$  of multiplication by  $B(x)$, identified as a matrix having coefficients in $L^\infty(\R^n, \C)$ , in which case (H5) is a form of G\aa rding inequality. When $D$ is first order, i.e.~$k=1$, and $B$ is such a multiplication operator, then the off-diagonal decay (H6) holds true. Moreover, when $k>1$, then (H6) is still satisfied in the case of most interest to us. (See Section \ref{offdiag}.)  However, we wanted to enlighten the observation that only (H6) is needed (in our arguments).   We also stress that $D$ is not assumed to be one-to-one.

This theorem is proved in \cite{AKMc}  for first order $D$, i.e. $k=1$, as  a corollary of another quadratic estimate. Our direct proof is shorter and simpler from the algebraic point of view, and also from the  analysis point of view even though the same deep ideas are involved (Carleson measures, $T(b)$ argument).  Furthermore,  our proof   allows a simultaneous treatment  of higher order $D$, i.e. $k\ge 2$, which is new.

 The  interest of proving a quadratic estimate  is mainly in  the following proposition as a corollary of  results developed in \cite{Mc}.

\begin{prop}\label{cor:main} Assume that $B,D$ satisfy (H2,4,5) on a Hilbert space $\mH$, that $BD$ satisfies the quadratic estimate \eqref{eq:sfBD}, and that $B^*D$ satisfies the same quadratic estimate with $B^*$ in place of $B$. Then the operator $\sgn(BD)$ is bounded  on $\mH$ and  invertible on  $\clos{\ran(BD)}$. 
\end{prop}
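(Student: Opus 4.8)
The plan is to recognise \eqref{eq:sfBD}, together with its $B^*$-analogue, as exactly the pair of quadratic estimates that, through the abstract theory of \cite{Mc}, yields a bounded holomorphic functional calculus for $BD$ on $\clos{\ran(BD)}$, and then to evaluate that calculus at $f=\sgn$.

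First I would set the stage. By Proposition \ref{prop:typeomega} the hypotheses (H2,4,5) make $BD$ an $\omega$-bisectorial operator with $\omega<\pi/2$; in particular $\mH=\nul(BD)\oplus\clos{\ran(BD)}$ (a topological, not necessarily orthogonal, splitting), and the same holds for $(BD)^*$, which equals $DB^*$ because $D$ is self-adjoint. Fix $\mu$ with $\omega<\mu<\pi/2$ and put $\psi(z)=z(1+z^2)^{-1}$, a non-degenerate holomorphic function with the usual decay at $0$ and $\infty$ on the open double sector of half-angle $\mu$. After the change of variable $s=t^k$, \eqref{eq:sfBD} becomes (a constant multiple of) $\int_0^\infty\|\psi(sBD)u\|^2\,\tfrac{ds}{s}\lesssim\|u\|^2$, which is the quadratic estimate for $BD$ demanded by \cite{Mc}. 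What remains is the matching estimate for $(BD)^*=DB^*$, which I will extract from the assumed quadratic estimate for $B^*D$.

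For this transference step, write $N:=B^*D$ and $N':=DB^*$, so that $N=RS$ and $N'=SR$ with $R:=B^*$ bounded and $S:=D$ self-adjoint. Using $DN=N'D$ (so that $D$ intertwines $(1+s^2N^2)^{-1}$ and $(1+s^2N'^2)^{-1}$) together with $DB^*=N'$, one obtains the bounded-operator identity
\[ s\,N'(1+s^2N'^2)^{-1}\;=\;s\,D\,(1+s^2N^2)^{-1}\,B^* . \]
Next, (H5) forces $B^*$ to be bounded below on $\ran D$: for $g\in\ran D$ one has $\re(B^*g,g)=\re(Bg,g)\ge\delta\|g\|^2$, hence $\|B^*g\|\ge\delta\|g\|$. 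Since $sD(1+s^2N^2)^{-1}B^*w\in\ran D$, the identity yields, for every $w\in\mH$,
\[ \|s\,N'(1+s^2N'^2)^{-1}w\|\;\le\;\delta^{-1}\,\big\|\,B^*\,sD(1+s^2N^2)^{-1}B^*w\,\big\|\;=\;\delta^{-1}\,\big\|\,s\,N(1+s^2N^2)^{-1}B^*w\,\big\| . \]
Integrating $\tfrac{ds}{s}$ and applying the assumed quadratic estimate for $B^*D$ to the vector $B^*w$ (with $\|B^*w\|\le\|B\|\,\|w\|$) gives the quadratic estimate for $DB^*=(BD)^*$. This $RS\leftrightarrow SR$ transference could also be quoted from \cite{AKMc}.

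With quadratic estimates now available for both $BD$ and its adjoint, \cite{Mc} produces a bounded $H^\infty$ functional calculus for $BD$ on $\clos{\ran(BD)}$ over the open double sector of half-angle $\mu$. The function $\sgn$, equal to $+1$ on one component and $-1$ on the other, is holomorphic and bounded there, so $\sgn(BD)$ is bounded on $\clos{\ran(BD)}$; extending it by zero on $\nul(BD)$ along the splitting above gives a bounded operator on all of $\mH$. Finally $\sgn^2\equiv1$, so by multiplicativity of the calculus $\sgn(BD)^2=I$ on $\clos{\ran(BD)}$, which shows $\sgn(BD)$ is invertible there, being its own inverse. I expect the transference identity of the third paragraph — and the observation that (H5) makes $B^*$ coercive on $\ran D$ — to be the only point requiring care; the remainder is a direct appeal to Proposition \ref{prop:typeomega} and to \cite{Mc}.
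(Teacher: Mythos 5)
Your argument is correct and is essentially the paper's: your transference of the quadratic estimate from $B^*D$ to $DB^*=(BD)^*$ via the intertwining identity and the coercivity of $B^*$ on $\clos{\ran(D)}$ (a consequence of (H5)) is exactly the content of Proposition \ref{prop:typeomega2}(iv), which the paper obtains through the similarity $DB=(B')^{-1}(BD)B'$ on $\clos{\ran(DB)}$, and the remaining step is the standard quadratic-estimates-imply-bounded-$H^\infty$-calculus theorem. The only difference is one of presentation: where you cite \cite{Mc} as a black box, the paper writes out the relevant instance, representing $\sgn(BD)f=c\int_0^\infty (Q_t^B)^3f\,\frac{dt}{t}$ for $f\in\dom(BD)\cap\ran(BD)$ and bounding $|(\sgn(BD)f,g)|$ by Cauchy--Schwarz against $\big(\qe{Q_t^Bf}\big)^{1/2}\big(\qe{(Q_t^B)^*g}\big)^{1/2}$, which is precisely where the two quadratic estimates enter.
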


The operator $\sgn(BD)$ is zero on $\nul(BD)$ and satisfies $(BDBD)^{1/2}=\sgn(BD)BD$ on $\dom(D)$.
More is true, in particular $BD$ has a bounded holomorphic functional calculus on $\mH$. We remark that the specific nature of   $\mH$, $B$ and $D$ is not used in this proposition, which follows from operator theoretic considerations, once quadratic estimates for the operators  $BD$ and $B^*D$   have been proved.

When $k=1$, we obtain the following corollary to this result, once we have proved Proposition \ref{pseudoloc}. Note that if $B$ satisfies (H4,5), then so does $B^*$.

\begin{cor} Assume that $B,D$ satisfy (H1-5) on  $L^2(\R^n,\C^m)$, 
that $k=1$, 
and that $B$ is multiplication by a function $B\in L^\infty(\R^n,\mL(\C^m))$.
 Then the operator $\sgn(BD)$ is bounded  on $\mH$ and  invertible on  $\clos{\ran(BD)}$. 
\label{prop:firstorder}\end{cor}

 When $k>1$, we do not know if $B$ being a multiplication operator is enough, in addition to (H1-5), to conclude for the boundedness of $\sgn(BD)$. It is the case when $B,D$ are as in Section \ref{sec:K} and the boundedness of $\sgn(BD)$ appears new.

Known consequences  of the boundedness of  operators $\sgn(BD)$ are  short proofs of the Kato conjecture for  elliptic systems in divergence form \cite{AHLMcT, AHMcT}  and  the boundedness of the Cauchy integral on Lipschitz curves \cite{CMcM} (see Section \ref{sec:KC}).
 In Section \ref{ESSIBD}  we give a pedestrian account of one of the results obtained in  \cite{AAM} concerning boundary value problems for second order elliptic systems    which, in particular,   give new proofs of solvability for single equations with real symmetric coefficients established in \cite{JK1, DJK, KP}.   In Section \ref{sec:relops}, we show that the quadratic estimate for $BD$ has different  equivalent formulations with operators built by functional analytic considerations, including the one studied in \cite{AKMc}, and  we present a new application related to BVPs for differential forms.
 
  The quadratic estimate (\ref{eq:sfBD})  has some further interest. It is easily seen to be stable under perturbation with lower order terms. This implies a simple proof of the Kato conjecture for inhomogeneous    elliptic operators (or systems) in divergence form, where previously it required an interpolation procedure from pure  operators or a longer argument  \cite{AT} (See also \cite{AKMc1}).  The extension to inhomogeneous situations is motivated also by potential applicability to time-harmonic  Maxwell's equations. See the introduction of \cite{AAM}.

\medskip
 
\noindent{\bf Acknowledgments.}

 This work grew out from a visit of the last two named authors to the Universit\'e Paris-Sud. A.A. and A.Mc. thank this University  for partial support. This research was also supported by the Australian Government through the Australian Research  Council and through the International Science Linkages FAST program. 

 Thanks are also due to the organisers of the El Escorial 2008 conference for opportunity of presenting this work both in lectures and in these proceedings, and for a well organised and stimulating conference. We also thank the anonymous referee for suggestions that improved the presentation of this article.

\section{Kato and Cauchy}\label{sec:KC}

 We present two typical applications of the boundedness of $\sgn(BD)$ already in the literature (at least when $k=1$). We refer to \cite{AKMc} and the references therein for a number of further applications.  

\subsection{Kato}\label{sec:K}
 
 The application to the square root of elliptic systems  $L=  (\nabla^{k})^* A \nabla^k$ is as follows:   $A$ is multiplication by a bounded  matrix $A(x)$, and one assumes the G\aa rding inequality
$$
\re ( A \nabla^k  u   , \nabla^k  u ) \ge  \delta \|\nabla^k  u\|^2, \quad  \text{for all } \ u \in H^k(\R^n, \C^N).
$$
Here $u$ is $\C^N$-valued.
 Thus, we set $L^2(\R^n, \C^m) = L^2(\R^n, \C^N) \oplus L^2(\R^n, \C^{Np})$ where $m=N+Np$ and $p$ is the length of the array $\nabla^k$,
  \begin{equation}  \label{eq:defhigherD}
 D:=
     \begin{bmatrix}
        0 &  (\nabla^{k} )^* \\
        \nabla^k  & 0
     \end{bmatrix}, \quad 
        B:=  \begin{bmatrix}
       I & 0 \\
        0 & A
     \end{bmatrix}.
\end{equation}
One easily checks (H1-5).   For (H6), see Section \ref{offdiag}.
If $M= A ( \nabla^k  ) (\nabla^{k} )^*$, then
$$
(BD)^2=\begin{bmatrix}
       L   &  0 \\
         0 & M
     \end{bmatrix},  \quad  \sqrt{ (BD)^2}= \begin{bmatrix}
       \sqrt{L}   &  0 \\
         0 & \sqrt{M} 
     \end{bmatrix} .
     $$
     Since $\sqrt{ (BD)^2} =  \sgn(BD)  BD$, we get for $u\in L^2(\R^n,\C^N)$ under appropriate domain assumptions that,
          $$
  \| \sqrt { L }  u \| =  \left\|\sqrt{(BD)^2} \begin{bmatrix}   u \\ 0 \end{bmatrix}\right\|\approx  \left\|BD \begin{bmatrix}   u \\ 0 \end{bmatrix}\right\| = \|A(\nabla^{k}  u)\| \approx \| \nabla^{k}  u\|.
  $$

 \subsection{Cauchy}
As for the Cauchy integral,  assume $n=m=1$, $D= -i\frac d{dx}$ and $B$ is multiplication by $b(x)=\frac 1 {a(x)}$ where $a\in L^\infty(\R, \C)$ with $\re a \ge \delta>0$.  Then $\sgn (BD)$ is similar to  the Cauchy integral on the Lipschiz curve with parametrization $z(x)$ defined by $z'(x)=a(x)$.

\section{Proof of the main theorem}\label{sec:PMT}

\subsection{Functional calculus for $BD$}

First we need some review on functional calculus. Because of (H2),  $D$ is closed and densely defined and there is an orthogonal  splitting
\begin{equation}\label{eq:splitting}
\mH= \nul(D) \oplus \clos{\ran(D)}.
\end{equation}

Define closed  double sectors in the complex plane by
$$
S_{\omega} := \{z\in\C : {| \pm\arg z|\le\omega}\cup\{0\}\}, 
$$
and define the {\em angle of accretivity} of $B$ to be
$$
   \omega:= \sup_{v\not = 0} |\arg(Bv,v)|  <\pi/2.
$$
\begin{prop}   \label{prop:typeomega}
Under (H2,4,5), we have
\begin{itemize}
\item[{\rm (i)}]
The operator $BD$ is $\omega$-bisectorial, i.e. $\sigma(BD)\subset S_\omega$ and there are resolvent bounds 
$\|(\lambda I - BD)^{-1}\| \lesssim 1/ \dist(\lambda, S_\omega)$ when $\lambda\notin S_\omega$.
\item[{\rm (ii)}]
The operator $BD$ has range $\ran(BD)=B\ran(D)$ and null space $\nul(BD)=\nul(D)$ such that topologically (but in general non-orthogonally) one has
$$
\mH = \clos{\ran(BD)} \oplus \nul(BD).
$$
\item[{\rm (iii)}] 
The restriction of $BD$ to $\clos{\ran(BD)}$ 
is a closed and injective operator with dense range in
$\clos{\ran(BD)}$, with estimates on spectrum and resolvents as in (i).
\end{itemize}
\end{prop}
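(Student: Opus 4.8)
The plan is to establish (ii) and (i) by elementary Hilbert-space arguments and then to deduce (iii) from the two. I would use throughout the orthogonal splitting $\mH=\nul(D)\oplus\clos{\ran(D)}$ (valid because $D=D^{*}$), together with the observation that $B^{*}$ inherits (H5) and the accretivity angle $\omega$, since $\re(B^{*}v,v)=\re\conj{(Bv,v)}$; by continuity, strict accretivity of $B$ and of $B^{*}$ then holds on all of $\clos{\ran(D)}$.

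For (ii): from $\re(BDu,Du)\ge\delta\|Du\|^{2}$ the operator $B$ is injective on $\ran(D)$, so $\nul(BD)=\nul(D)$, while $\ran(BD)=B\ran(D)$ is trivial. Since $\|Bv\|\ge\re(Bv,v)/\|v\|\ge\delta\|v\|$ for $v\in\clos{\ran(D)}$, the restriction $B|_{\clos{\ran(D)}}$ is bounded below and hence an isomorphism onto the closed space $B\clos{\ran(D)}$; thus $\clos{\ran(BD)}=\clos{B\ran(D)}=B\clos{\ran(D)}$. To get the topological direct sum I would check that $B\clos{\ran(D)}\cap\nul(D)=\{0\}$ (if $Bv=z$ with $v\in\clos{\ran(D)}$, $z\in\nul(D)$, then $\delta\|v\|^{2}\le\re(Bv,v)=\re(z,v)=0$), that $B\clos{\ran(D)}+\nul(D)$ is dense (a $w$ orthogonal to both lies in $\clos{\ran(D)}$ with $B^{*}w\perp\clos{\ran(D)}$, i.e.\ $B^{*}w\in\nul(D)$, so $\re(B^{*}w,w)=0$ and $w=0$), and that the projection onto $B\clos{\ran(D)}$ along $\nul(D)$ is bounded (from $\re(Bv+z,v)=\re(Bv,v)\ge\delta\|v\|^{2}$ one gets $\|v\|\le\delta^{-1}\|Bv+z\|$, hence $\|Bv\|\le\delta^{-1}\|B\|\,\|Bv+z\|$). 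This yields $\mH=\clos{\ran(BD)}\oplus\nul(BD)$ and, since $\nul(D)\subset\dom(D)$, also $\dom(D)=(\dom(D)\cap\clos{\ran(BD)})\oplus\nul(D)$.

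For (i): fix $\lambda\notin S_{\omega}$, and for $v\in\dom(D)$ with $(\lambda I-BD)v=u$ pair the equation with $Dv$. Using $D=D^{*}$ this reads $\lambda a-b=(u,Dv)$ with $a:=(v,Dv)\in\R$ and $b:=(BDv,Dv)\in S_{\omega}$, $\re b\ge\delta\|Dv\|^{2}$. Since $S_{\omega}=-S_{\omega}$ is a cone, $\dist(\lambda a,S_{\omega})=|a|\dist(\lambda,S_{\omega})$, so $|a|\dist(\lambda,S_{\omega})\le|\lambda a-b|\le\|u\|\,\|Dv\|$; combining with $\delta\|Dv\|^{2}\le|b|\le|\lambda|\,|a|+\|u\|\,\|Dv\|$ gives $\|Dv\|\lesssim\|u\|(1+|\lambda|/\dist(\lambda,S_{\omega}))$, and then $\lambda v=u+BDv$ together with $\dist(\lambda,S_{\omega})\le|\lambda|$ gives $\|v\|\lesssim\|u\|/\dist(\lambda,S_{\omega})$. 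So $\lambda I-BD$ is injective with closed range and the stated resolvent bound; running the same estimate for $B^{*}D$ identifies $\nul((\lambda I-BD)^{*})=\nul(\bar\lambda I-DB^{*})=\{0\}$, so $\lambda I-BD$ is onto and $\sigma(BD)\subset S_{\omega}$.

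For (iii): $BD$ is closed ($B$ bounded, $D$ closed), and by (ii) both $\mH$ and $\dom(D)$ split into a $BD$-invariant null part and the $BD$-invariant part $\clos{\ran(BD)}$, so $BD|_{\clos{\ran(BD)}}$ is closed with spectrum and resolvent estimates inherited from (i); it is injective since $\nul(BD)\cap\clos{\ran(BD)}=\{0\}$, and its range is $B\ran(D)$ — given $w=Dg\in\ran(D)$, write $g=v+z$ with $v\in\dom(D)\cap\clos{\ran(BD)}$, $z\in\nul(D)$, so $BDv=Bw$ — which is dense in $B\clos{\ran(D)}=\clos{\ran(BD)}$. The step I expect to be most delicate is the resolvent estimate in (i): one has to squeeze a bound of exactly the form $\dist(\lambda,S_{\omega})^{-1}$, uniform as $\lambda$ approaches the sector, out of the single scalar identity obtained by testing against $Dv$, and then recover surjectivity via the adjoint $DB^{*}$ using that $B^{*}$ still satisfies (H5); a secondary point requiring care is the boundedness of the oblique projection that makes $B\clos{\ran(D)}+\nul(D)$ closed in (ii).
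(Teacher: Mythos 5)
Your proof is correct, and it is considerably more self-contained than what the paper offers: the paper's ``proof'' of this proposition is essentially a citation of \cite{ADMc} for the case where $D$ is one-to-one, together with the instruction to first establish (ii) from the orthogonal splitting $\mH=\nul(D)\oplus\clos{\ran(D)}$ and (H5) and then adapt that argument. Your route follows the intended skeleton — (ii) first via the splitting and accretivity, with the adjoint $B^*$ used to get density of $B\clos{\ran(D)}+\nul(D)$ and the accretivity inequality used to bound the oblique projection — but you then prove (i) directly by the numerical-range argument (pairing $(\lambda I-BD)v=u$ against $Dv$ and using that $(v,Dv)\in\R$ while $(BDv,Dv)\in S_\omega$), which is exactly the mechanism hidden inside the cited reference, and deduce (iii) from the invariance of the splitting. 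Two small points deserve tightening. First, ``$B$ bounded, $D$ closed'' does not by itself give closedness of $BD$; you need the lower bound $\|Bv\|\ge\delta\|v\|$ on $\clos{\ran(D)}$ (which you established in (ii)) to conclude that $BDu_n\to w$ forces $Du_n$ to converge. Second, in the surjectivity step the equation $DB^*w=\bar\lambda w$ does not immediately reduce to your estimate for $B^*D$: you should note that such a $w$ lies in $\ran(D)\subset\clos{\ran(D)}$, apply $B^*$ to get $B^*D(B^*w)=\bar\lambda B^*w$, and use that $B^*$ is bounded below on $\clos{\ran(D)}$ so that $B^*w\neq0$ whenever $w\neq0$, contradicting the injectivity of $\bar\lambda I-B^*D$. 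With these two sentences added, the argument is complete.
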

 These properties of closed operators of the form $BD$ have been known for
some time in the case when $D$ is one-one, see for example \cite{ADMc}. When $D$ is not one-one, first prove (ii), using (\ref{eq:splitting}) and (H5), and then adapt the proof in \cite{ADMc} to prove (iii). Part (i) follows. 
 Note that this proposition only uses the fact that $D$ is self-adjoint and $B$ bounded and strictly accretive on $\ran(D)$.

We set $R_s^B=(I+isBD)^{-1}$ for $s\in \R$. Then  
$$Q_t^B= \frac{1}{2i} (R_{-t^k}^B -R_{t^k}^B)= t^k BD(1+t^{2k}BDBD)^{-1}$$ and also $\frac{1}{2} (R_{-t^k}^B +R_{t^k}^B)= (1+t^{2k} BDBD)^{-1}$.  It follows from the previous result that $R_s^B$, hence $Q_t^B$ 
and $(1+t^{2k} BDBD)^{-1}$, are uniformly bounded operators on $\mH$. 

We  now come to the proof of Theorem \ref{th:main} and  assume all the requirements in (H).

\subsection{Reduction to a Carleson measure}\label{sec:reduction}

Observe that  by item (ii) of Proposition \ref{prop:typeomega}, as $Q_t^B$ vanishes on $ \nul(BD)$ it is enough to prove the quadratic estimate (\ref{eq:sfBD}) for $u\in \clos{\ran(BD)}$, hence for  $u\in {\ran(BD)}$. Setting $\Theta_t=Q_t^B B$, it amounts to showing  $\qe{\Theta_t Dv} \lesssim \|Dv\|^2$ for  all $v\in \dom(D)$. 

Let $P_t$ be a nice approximation of the identity, i.e. the convolution with a real valued function $t^{-n} \varphi(x/t)$ with $\varphi$ smooth and having Fourier transform identically 1 near 0. Let $P_t$ act on $\C^m$-valued function componentwise.

\begin{prop}\label{whatisneeded}  
\begin{equation}  \label{firsttermprop}
     \qe{\Theta _t(\I-P_t) Dv}\lesssim \|Dv\|^2, \quad v \in \dom(D).
\end{equation}
\end{prop}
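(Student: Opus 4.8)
The plan is to estimate the "principal part minus its average" term $\Theta_t(I - P_t)$ by exploiting the off-diagonal decay (H6) together with the smoothing properties of $I - P_t$ acting on the range of $D$. First I would recall that $\Theta_t = Q_t^B B$ where $Q_t^B = t^kBD(I + t^{2k}BDBD)^{-1}$, so that by (H6) the operators $\Theta_t$ have off-diagonal decay of every polynomial order (with a loss of the $B$-norm, which is harmless). The key algebraic observation is that $\Theta_t D v = Q_t^B (BDv)$, and since $\|Q_t^B\| \lesssim 1$ uniformly, the whole game is to trade the operator $(I - P_t)$ against $D$: on functions in $\ran(D)$, or more precisely on $Dv$, the operator $I - P_t$ is "small at scale $t$" and improves decay.

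The main step is a discretization argument. Let $\dyadic^t$ denote a dyadic grid of cubes of sidelength comparable to $t$ (say $2^{-j} \le t < 2^{-j+1}$). Write $v = \sum_Q v_Q$ where $v_Q$ is (roughly) the part of $v$ living near the cube $Q$; more carefully, I would localize $Dv$ rather than $v$, decomposing $h := Dv \in \clos{\ran(D)}$. Then one estimates
\begin{equation*}
  \|\Theta_t(I - P_t) D v\|^2 = \|Q_t^B (I - P_t) h\|^2 \lesssim \left( \sum_Q \|\mathbf{1}_{Q} Q_t^B(I - P_t) h\| \right)^2,
\end{equation*}
wait — this is not quite how I want to organize it; instead I would bound $\|Q_t^B(I-P_t)h\|_{L^2(\R^n)}^2 = \sum_Q \|Q_t^B(I-P_t)h\|_{L^2(Q)}^2$ and for each target cube $Q$ split $h = \sum_{Q'} \mathbf{1}_{Q'} h$ over source cubes $Q'$, using (H6) to get
\begin{equation*}
  \|Q_t^B(I-P_t)(\mathbf{1}_{Q'} h)\|_{L^2(Q)} \lesssim \brac{\dist(Q,Q')/t}^{-N} \|(I-P_t)(\mathbf{1}_{Q'} h)\|,
\end{equation*}
and then I need the crucial gain that $\|(I - P_t) g\| \lesssim$ something small when $g$ is a piece of $Dv$. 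The point is that $P_t$ has a Fourier multiplier equal to $1$ near the origin, so $I - P_t$ vanishes to high order at frequency $0$; combined with $h = Dv$ and the fact that $D$ is a $k$th order homogeneous constant-coefficient operator, one writes $h = D v$ and moves one power of $D$ onto $I - P_t$: the operator $(I - P_t)$ applied after $D$ behaves like $t^k$ times a bounded (in fact rapidly decaying) operator, because $(I-\widehat{\varphi}(t\xi))/|t\xi|^k$ is a smooth bounded function. So morally $\|(I - P_t)Dv\| \lesssim \|(I-P_t)Dv\|$ and we extract a factor that makes the $t$-integral converge. Concretely I expect to show $\int_0^\infty \|\Theta_t(I - P_t)Dv\|^2 \frac{dt}{t} \lesssim \int_0^\infty \| t^k \wt Q_t Dv\|^2 \frac{dt}{t} \lesssim \|Dv\|^2$ by a standard Littlewood–Paley / Plancherel argument for the "nice" operators $\wt Q_t$ built from $D$ and $\varphi$, since $D$ being self-adjoint with (H3) makes $\int_0^\infty \|t^k D(I + t^{2k}D^2)^{-1}(\cdot)\|^2\frac{dt}{t} \lesssim \|\cdot\|^2$ trivially by the spectral theorem.

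So in summary the steps are: (1) reduce via $\Theta_t = Q_t^B B$ and $\|Q_t^B\|\lesssim 1$, and record the off-diagonal bounds for $Q_t^B$ from (H6); (2) use the cancellation in $I - P_t$ at low frequency and the structure $Dv$ to write $\Theta_t(I - P_t)Dv$ as $\Theta_t$ (which is uniformly bounded with off-diagonal decay) applied to $U_t Dv$ where $U_t$ is an operator of the form "$t^k$ times a bounded Fourier multiplier of Mikhlin type", or directly to compare $(I-P_t)Dv$ with $t^k$ applied to a Littlewood–Paley piece; (3) conclude by the elementary quadratic estimate for the self-adjoint operator $D$ (spectral theorem, using (H3) only to handle $\nul(D)$, which $I - P_t$ does \emph{not} automatically kill — but note $Dv \in \clos{\ran(D)}$ so this is fine). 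The main obstacle I anticipate is making precise the heuristic "move $D$ onto $I - P_t$": since $B$ and the resolvent $(I + t^{2k}BDBD)^{-1}$ do not commute with $D$ or with $P_t$, one cannot literally commute operators, and instead must handle everything through the kernel/off-diagonal estimates, combining the polynomial decay from (H6) with the $t^k$-gain from the frequency-$0$ cancellation of $I - P_t$ against the $k$th order operator $D$. Summing the resulting doubly-indexed series $\sum_{Q,Q'}\brac{\dist(Q,Q')/t}^{-N}$ and carrying out the $\frac{dt}{t}$ integration, using Schur's test or Cauchy–Schwarz on the cube decomposition, is then routine provided $N$ is chosen large enough (say $N > n/2 + 1$).
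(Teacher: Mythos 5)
There is a genuine gap here. The proposition follows from a short operator-theoretic argument that you circle around but never land on, and the route you propose instead does not close. The key fact you dismiss is that $P_t$ \emph{does} commute with $D$: both are constant-coefficient Fourier multipliers (what fails to commute is $B$ and the resolvent, but that is irrelevant to this step). Hence, after reducing to $v\in\clos{\ran(D)}$ via \eqref{eq:splitting}, one writes
$$
\Theta_t(I-P_t)Dv=(\Theta_tD)(I-P_t)v=t^{-k}\bigl(I-(I+(t^kBD)^2)^{-1}\bigr)(I-P_t)v,
$$
so that $\|\Theta_t(I-P_t)Dv\|\lesssim t^{-k}\|(I-P_t)v\|$, and one concludes from the elementary Fourier estimate $\int_0^\infty\|(I-P_t)v\|^2\,dt/t^{2k+1}\lesssim\|\nabla^kv\|^2$ together with (H3). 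No off-diagonal decay is used for this proposition; (H6) enters only later, in the principal part approximation.

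Your plan breaks down in two places. First, the claimed gain ``$(I-P_t)$ applied after $D$ behaves like $t^k$ times a bounded operator'' is wrong as stated: the symbol identity $(1-\hat\varphi(t\xi))\hat D(\xi)=t^k\,\frac{1-\hat\varphi(t\xi)}{|t\xi|^k}\cdot|\xi|^k\hat D(\xi)$ shows that extracting $t^k$ costs $k$ \emph{extra} derivatives of $v$ (an operator of order $2k$), which you do not control. The correct bookkeeping puts the factor $t^{-k}$ on the composite $\Theta_tD$ and the factor $t^k$ on $(I-P_t)v$ relative to $\|\nabla^kv\|$; it is here --- not ``to handle $\nul(D)$'' --- that (H3) is essential, converting $\|\nabla^k v\|$ back into $\|Dv\|$. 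Second, your discretization $h=\sum_{Q'}\mathbf 1_{Q'}h$ destroys the only source of decay in $t$: for a generic $g\in L^2$ one has $\int_0^\infty\|(I-P_t)g\|^2\,dt/t=\infty$, because $1-\hat\varphi(t\xi)\to1$ as $t\to\infty$. The cancellation lives in the global identity $(I-P_t)Dv=D\,(I-P_t)v$, and a truncated piece $\mathbf 1_{Q'}Dv$ is no longer $D$ of anything controlled. Summing $\brac{\dist(Q,Q')/t}^{-N}$ therefore yields at best a bound uniform in $t$, and the $dt/t$ integration diverges. The ``main obstacle'' you flag at the end is thus not a technicality but precisely the missing idea.
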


\begin{proof}\label{proof:thm}  For the purpose of this proof,  by using the splitting
\eqref{eq:splitting}, one can even assume $v\in \clos{\ran(D)}$. Since $P_t$ and $D$ commute and $(I-P_t)v\in \dom(D)$, we have 
$$
\Theta_t (I-P_t) Dv= (\Theta_t D) (I-P_t)v= t^k(BD)^2(I+(t^kBD)^2)^{-1} (I-P_t)v.
$$
Now $(t^kBD)^2(I+(t^kBD)^2)^{-1} =I -(I+(t^kBD)^2)^{-1}$ is uniformly bounded, hence
$$
\|\Theta_t (I-P_t) Dv\| \lesssim \frac 1{ t^k} \| (I-P_t)v\|.
$$
Standard Fourier arguments show that
$$
 \int_0^\infty\|(\I-P_t) v\|^2\,\frac{dt}{t^{2k+1}}\lesssim \|\nabla^k  v\|^2
 $$
 and we conclude the proof of  (\ref{firsttermprop}) using (H3). \qedend
 \end{proof}

 \begin{rem} There are different possible choices of $P_t$'s. For example, following \cite{AKMc} one can take $P_t=(I+t^{2k}D^2)^{-1}$. The organisation of the reduction to a Carleson measure would be somewhat different.
 \end{rem}

 Next, we perform  the principal part approximation.  

We use the following dyadic decomposition of $\R^n$. Let
$\dyadic= \bigcup_{j=-\infty}^\infty\dyadic_{2^j}$ where
$\dyadic_{2^j}:=\{ 2^j(k+(0,1]^n) :k\in\Z^n \}$. For a dyadic cube $Q\in\dyadic_{2^j}$, denote by $l(Q)=2^j$
its \emph{sidelength}, by $|Q|= 2^{nj}$ its {\em volume}.  We set $\dyadic_t=\dyadic_{2^j}$  if $2^{j-1}<t\le
2^j$.  Let the {\em dyadic averaging operator} $S_t:
\mH \rightarrow \mH $ be given by
$$
  S_t u(x) := u_Q:= \barint_{\hspace{-6pt}Q} u(y)\,  dy = \frac{1}{|Q|} \int_Q
  u(y)\,  dy
$$
for every $x \in \R^n$ and $t>0$, where  $Q $ is
the unique dyadic cube  in $ \dyadic_t$ that contains $x$. We remark that $S^2_t=S_t$.

\begin{defn}  \label{defn:princpart}
By the {\em principal part} of $(\Theta_t)_{t>0}$
we mean the multiplication operators $\gamma_t$ defined by
$$
   \gamma_t(x)w:= (\Theta_t w)(x)
$$
for every $w\in \C^m$. We view $w$ on the right-hand side
of the above equation as the constant function valued in $\C^m$ defined on $\R^n$ 
by $w(x):=w$. 
We identify $\gamma_t(x)$ with the (possibly unbounded) multiplication
operator $\gamma_t: f(x)\mapsto \gamma_t(x)f(x)$.
\end{defn}
\begin{lem}\label{lem:gammat}  
The operator $\Theta_t$ extends to a bounded operator from 
$L^\infty$ into $ L^2_{\text{loc}}$.
In particular we have well defined functions 
$\gamma_t\in L^2_{\text{loc}}(\R^n; \mL(\C^m,\C^m))$ with bounds
$$
   \barint_{\hspace{-6pt}Q} |\gamma_t(y)|^2 \, d y \lesssim  
  1
$$
for all $Q\in\dyadic_t$.  
Moreover, $\|\gamma_t S_t\|\lesssim 1$ uniformly for all $t>0$.
\end{lem}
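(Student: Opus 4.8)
The plan is to exploit the off-diagonal decay hypothesis (H6) to show that $\Theta_t = Q_t^B B$ maps bounded functions into $L^2_{\mathrm{loc}}$ with the claimed local bound, and then to deduce the estimate on $\gamma_t$ and the uniform bound $\|\gamma_t S_t\|\lesssim 1$. First I would fix $t>0$ and a cube $Q\in\dyadic_t$, and for $u\in L^\infty$ decompose $u = u\mathbf 1_{2Q} + \sum_{j\ge 1} u\mathbf 1_{C_j(Q)}$, where $C_j(Q) = 2^{j+1}Q\setminus 2^jQ$ are the dyadic annuli around $Q$. For the local piece $u\mathbf 1_{2Q}$, the uniform $L^2$-boundedness of $Q_t^B$ (from Proposition 1.7 via the remarks on $Q_t^B$) together with $\|B\|_\infty\lesssim 1$ gives $\|\Theta_t(u\mathbf 1_{2Q})\|_{L^2(Q)}\lesssim \|u\mathbf 1_{2Q}\|_2 \lesssim |Q|^{1/2}\|u\|_\infty$. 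For the far pieces, I apply (H6) with $E=Q$, $F=C_j(Q)$, so that $\dist(E,F)\gtrsim 2^j l(Q)\gtrsim 2^j t$; choosing $N=n+1$ in \eqref{odn} yields $\|\Theta_t(u\mathbf 1_{C_j(Q)})\|_{L^2(Q)}\lesssim 2^{-j(n+1)}\|u\mathbf 1_{C_j(Q)}\|_2 \lesssim 2^{-j(n+1)}(2^{jn}|Q|)^{1/2}\|u\|_\infty = 2^{-j(1+n/2)}|Q|^{1/2}\|u\|_\infty$. Summing the geometric series over $j\ge 1$ and combining with the local piece gives $\|\Theta_t u\|_{L^2(Q)}\lesssim |Q|^{1/2}\|u\|_\infty$, which is exactly the asserted boundedness $L^\infty\to L^2_{\mathrm{loc}}$.

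Next I would obtain the function $\gamma_t$ and its bound. Applying the above with $u\equiv w$ the constant function valued in $\C^m$ (here $\|w\|_\infty = |w|$), Definition 1.8 gives $\gamma_t(\cdot)w = (\Theta_t w)(\cdot)\in L^2_{\mathrm{loc}}$, and running over a basis of $\C^m$ shows $\gamma_t\in L^2_{\mathrm{loc}}(\R^n;\mL(\C^m,\C^m))$ with
$$
  \Big(\barint_{\hspace{-6pt}Q} |\gamma_t(y)|^2\,dy\Big)^{1/2} \lesssim |Q|^{-1/2}\,|Q|^{1/2} = 1
$$
for every $Q\in\dyadic_t$, which is the stated estimate. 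For the last assertion, fix $f\in\mH$ and $Q\in\dyadic_t$: on $Q$ we have $S_t f \equiv f_Q$ is constant, so $\gamma_t S_t f = \gamma_t(\cdot) f_Q$ on $Q$, whence $\|\gamma_t S_t f\|_{L^2(Q)}^2 = |f_Q|^2\int_Q|\gamma_t|^2 \lesssim |f_Q|^2|Q| = \big(\barint_Q|f|\big)^2|Q|\le \int_Q |f|^2$ by Cauchy--Schwarz. Summing over the disjoint cubes $Q\in\dyadic_t$ covering $\R^n$ yields $\|\gamma_t S_t f\|^2 \lesssim \|f\|^2$, i.e. $\|\gamma_t S_t\|\lesssim 1$, uniformly in $t$.

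The only genuinely substantive ingredient is the off-diagonal decay (H6), which is precisely what licenses the annular decomposition to converge; everything else is the uniform boundedness of $Q_t^B$ from Proposition 1.7 and two applications of Cauchy--Schwarz. So I expect no real obstacle here — the one point requiring a little care is matching the scales, namely that $Q\in\dyadic_t$ forces $l(Q)\simeq t$ so that the separation of the annuli is measured correctly in units of $t$ inside $\brac{\dist(E,F)/t}^{-N}$, and that the number $N$ is taken large enough (here $N=n+1$ suffices) to beat the volume growth $2^{jn}$ of the annuli.
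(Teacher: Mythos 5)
Your proof is correct and follows essentially the same route as the paper's: the same annular decomposition $f=f\mathbf 1_{2Q}+\sum_{j\ge1}f\mathbf 1_{C_j(Q)}$, the same use of (H6) with $N$ large enough to beat the $2^{jn}$ volume growth (the paper leaves $N$ unspecified where you take $N=n+1$), the same passage to a basis of $\C^m$ for the $\gamma_t$ bound, and the same Cauchy--Schwarz argument cube-by-cube for $\|\gamma_t S_t\|\lesssim1$. No gaps.
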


\begin{proof} Fix a cube $Q \in \dyadic_t$ and $f \in L^\infty(\R^n,\C^m)$ with $\|f\|_\infty=1$. Then
write $f= f_0+ f_1+f_2+\ldots$ where $f_0=f$ on $2Q$ and $0$ elsewhere and if $j\ge 1$, $f_j=f$ on $2^{j+1}Q \setminus 2^{j}Q$ and $0$ elsewhere. Then apply $\Theta_t$  and use (H6) for each term $\Theta_t f_j$ with $N$ large enough  and sum  to obtain
$$
   \barint_{\hspace{-6pt}Q} |(\Theta_tf)(y)|^2 \, d y \le  
  C.
$$
If we do this for  the constant functions with values describing an orthonormal basis of $\C^m$ and sum, we obtain an upper bound for the desired average of $\gamma_t$.   
Next,  for a function $f\in \mH$, 
$$
\|\gamma_t S_tf\|^2 = \sum_{Q\in \dyadic_t} \int_Q \left|\gamma_t (y)\bigg( \barint_{\hspace{-6pt}Q} f\bigg)\right|^2 \, dy \lesssim  \sum_{Q\in \dyadic_t}  |Q|\left|\,\barint_{\hspace{-6pt}Q} f\right |^2 \le \|f\|^2.
$$
\end{proof}
We have the following principal part approximation of $\Theta_t$ by $\gamma_tS_t$.
\begin{lem}  \label{lem:ppa} We have
 \begin{equation}\label{eq:ppa1}
  \qe{\Theta_tP_tf-\gamma_t S_t f}
  \lesssim \|f\|^2,  \quad f\in \mH. 
\end{equation}
Combining this with Proposition \ref{whatisneeded}, we obtain the principal part approximation 
\begin{equation}\label{eq:ppa}
 \qe{\Theta_t Dv-\gamma_t S_t Dv}
  \lesssim \|Dv\|^2, \quad v\in \dom(D).
\end{equation}
\end{lem}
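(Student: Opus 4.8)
The plan is to prove the estimate \eqref{eq:ppa1} directly; the second estimate \eqref{eq:ppa} then follows immediately from \eqref{eq:ppa1} with $f = Dv$ combined with Proposition \ref{whatisneeded}, since $\Theta_t P_t Dv - \gamma_t S_t Dv = (\Theta_t Dv - \gamma_t S_t Dv) - \Theta_t(\I - P_t)Dv$ and the triangle inequality in $L^2(\tfrac{dt}{t})$ absorbs the error term bounded in Proposition \ref{whatisneeded}. So the whole content is \eqref{eq:ppa1}.

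For \eqref{eq:ppa1} I would write $\Theta_t P_t f - \gamma_t S_t f = (\Theta_t P_t f - \gamma_t P_t f) + \gamma_t(P_t - S_t)f =: \mathrm{I}_t f + \mathrm{II}_t f$. For the first term, note that $\gamma_t P_t f$ means: apply $\gamma_t$ as a multiplication operator to the function $P_t f$, while $\Theta_t P_t f$ is $\Theta_t$ genuinely acting. The difference $\mathrm{I}_t$ encodes that $\Theta_t$ "almost" commutes with multiplication by slowly-varying functions — this is where the off-diagonal decay (H6) enters. The standard trick is to localize: fix a dyadic cube $Q \in \dyadic_t$, decompose $f$ (or rather $P_tf$) into pieces supported on the dyadic annuli $2^{j+1}Q \setminus 2^j Q$, and use that on the innermost piece $P_t f$ is well-approximated by the constant $\gamma_t$ is tested against, up to a gradient term of size $\lesssim t\,\|\nabla(P_t f)\|$ which is controlled by smoothness of $\varphi$; the outer pieces are handled by (H6) with $N$ large, exactly as in the proof of Lemma \ref{lem:gammat}. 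Summing over $Q \in \dyadic_t$ and integrating $\frac{dt}{t}$, the main gain is a Littlewood–Paley / square-function bound $\int_0^\infty \|t\nabla P_t f\|^2 \frac{dt}{t} \lesssim \|f\|^2$, which is a standard Fourier estimate.

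For the second term $\mathrm{II}_t f = \gamma_t (P_t - S_t) f$, I would use the uniform bound $\barint_{\hspace{-6pt}Q} |\gamma_t|^2 \lesssim 1$ from Lemma \ref{lem:gammat} together with the fact that $P_t - S_t$, as an operator, has a square-function estimate $\int_0^\infty \|(P_t - S_t)f\|^2 \frac{dt}{t} \lesssim \|f\|^2$. The latter is again standard: $P_t$ and $S_t$ are both "approximations of the identity" at scale $t$ in the sense that they reproduce constants locally and have kernels concentrated at scale $t$, and their difference applied to $f$ is an averaged oscillation of $f$ at scale $t$; more precisely one can bound $\|(P_t - S_t)f\|_{L^2(Q)}$ pointwise-in-$Q$ in terms of a Poincaré-type oscillation of $f$ near $Q$, sum over $Q \in \dyadic_t$, and integrate — alternatively, split $P_t - S_t = (P_t - I) + (I - S_t)$ and use the individual square-function bounds for each, since both $\int \|(I-P_t)f\|^2\frac{dt}{t}\lesssim\|f\|^2$ (Fourier) and $\int \|(I-S_t)f\|^2\frac{dt}{t}\lesssim\|f\|^2$ (dyadic martingale / Haar) are classical. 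One must be slightly careful that multiplication by $\gamma_t$ is only $L^2_{\mathrm{loc}}$-bounded, not globally bounded; but since we are pairing it against $(P_t-S_t)f$ and controlling things cube-by-cube using the $L^2$-average bound on $\gamma_t$ over each $Q \in \dyadic_t$, this is exactly the setting Lemma \ref{lem:gammat} is designed for.

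The main obstacle I anticipate is the bookkeeping in term $\mathrm{I}_t$: making the "$\Theta_t$ almost commutes with multiplication" heuristic rigorous requires carefully peeling off the constant that $\gamma_t$ is defined against, controlling the telescoping of annuli uniformly in $Q$ and $t$, and verifying that the resulting error genuinely has the form $t\,|\nabla P_t f|$ (plus rapidly-decaying tails) so that the $\frac{dt}{t}$-integral converges to $\|f\|^2$. The off-diagonal bounds (H6) must be used with $N > n/2$ (or larger) to make the sum over dyadic annuli converge, and one needs that the same $C_N$ works uniformly in $t$, which is precisely what (H6) grants. None of the individual pieces is deep — the square-function estimates are textbook Fourier/martingale facts and the localization is routine — but organizing them so the constants chain together cleanly is the bulk of the work.
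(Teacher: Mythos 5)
Your overall architecture (peel off a ``commutator'' term handled by (H6) plus Poincar\'e plus Littlewood--Paley, and a remainder handled by a square-function estimate for $P_t-S_t$) matches the paper's, and your treatment of the first term is essentially the paper's argument. But there is a genuine gap in your second term. You decompose as $\Theta_tP_tf-\gamma_tS_tf=(\Theta_tP_tf-\gamma_tP_tf)+\gamma_t(P_t-S_t)f$ and propose to bound $\|\gamma_t(P_t-S_t)f\|$ by combining $\frac1{|Q|}\int_Q|\gamma_t|^2\lesssim1$ with $\qe{(P_t-S_t)f}\lesssim\|f\|^2$. That step is not valid: the raw multiplication operator $\gamma_t$ is \emph{not} bounded on $L^2$, only $\gamma_tS_t$ is (this is exactly what Lemma \ref{lem:gammat} proves --- the average bound on $\gamma_t$ over $Q$ pairs with the \emph{constant} value of $S_th$ on $Q$ to give $\int_Q|\gamma_tS_th|^2\lesssim\int_Q|h|^2$). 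For a general $L^2$ function $h$ that is not constant on $Q$, $\int_Q|\gamma_t h|^2$ cannot be controlled by $\int_Q|\gamma_t|^2$ and $\int_Q|h|^2$ separately; $\gamma_t$ and $h$ could both concentrate on the same small subset of $Q$. Your parenthetical remark that ``this is exactly the setting Lemma \ref{lem:gammat} is designed for'' is therefore the precise point where the argument breaks: that lemma is designed for $\gamma_tS_t$, not for $\gamma_t$ acting on arbitrary $L^2$ inputs such as $(P_t-S_t)f$.

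The paper's decomposition is engineered to avoid this: it writes
\begin{equation*}
\Theta_tP_t-\gamma_tS_t=(\Theta_tP_t-\gamma_tS_tP_t)+\gamma_tS_t(P_t-S_t)+\gamma_t(S_t^2-S_t),
\end{equation*}
where the last term vanishes because $S_t^2=S_t$, so that $\gamma_t$ only ever appears composed with $S_t$ (bounded uniformly by Lemma \ref{lem:gammat}) or hidden inside the genuine operator $\Theta_t$. Your version could in principle be repaired --- e.g.\ by splitting $(P_t-S_t)f$ on each $Q\in\dyadic_t$ into its mean (a constant, handled by the average bound on $\gamma_t$) plus its oscillation, and then using the smoothness of $P_tf$ to convert the oscillation into a pointwise bound of the form $t\sup_Q|\nabla P_tf|$ before applying Cauchy--Schwarz against $\int_Q|\gamma_t|^2$ --- but this extra work is precisely what inserting $S_t$ renders unnecessary, and as written your proof of the second term does not go through. (Two minor points: the off-diagonal decay must be taken with $N>n+2$, not merely $N>n/2$, to absorb both the annulus count and the Poincar\'e factor $2^{2j}\ell(Q)^2$; and your derivation of \eqref{eq:ppa} from \eqref{eq:ppa1} and Proposition \ref{whatisneeded} is correct and is exactly what the paper intends.)
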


\begin{proof}  Write 
$$\Theta_tP_t -\gamma_t S_t = (\Theta_tP_t -\gamma_tS_tP_t )+ (\gamma_tS_t(P_t-S_t)) + (\gamma_tS_t^2 - \gamma_tS_t).
$$
Because $S_t^2=S_t$, the last term vanishes.  Next, as $\gamma_tS_t$ is uniformly bounded as an operator on $\mH$, we have 
$$
\qe{\gamma_tS_t(P_t-S_t)f} \lesssim \qe{(P_t-S_t)f} \lesssim \|f\|^2.
$$
The last inequality is done componentwise and is classical  (See, e.g. \cite{AT}, p. 172).

We pass to the first term.
We remark that for $t>0$ fixed and $x\in \R^n$, then 
$$(\Theta_tP_t -\gamma_tS_tP_t )f(x)= \Theta_t \bigg(g- \barint_{\hspace{-6pt}Q} g \bigg) (x)
$$
where $g=P_tf$ and $Q$ is the only dyadic cube in $\dyadic_t$ containing $x$. 
Define $C_0(Q)=2Q$ and $C_j(Q)=2^{j+1}Q\setminus 2^jQ$ if $j\in \N^*$. Then
\begin{align*}
\|(\Theta_tP_t -\gamma_tS_tP_t )f \|^2 &=  \sum_{Q\in \dyadic_t} \int_Q \bigg| \Theta_t\bigg(g- \barint_{\hspace{-6pt}Q} g \bigg)\bigg|^2 
\\
& \le \sum_{Q\in \dyadic_t} \left(\sum_{j\ge 0} \bigg(\int_Q \bigg| \Theta_t \bigg({\bf 1}_{C_j(Q)} \bigg(g- \barint_{\hspace{-6pt}Q} g \bigg)\bigg)\bigg|^2\ \bigg)^{1/2}\right)^{2}
\\
&
\lesssim  \sum_{Q\in \dyadic_t} \left(\sum_{j\ge 0} 2^{-jN} \bigg(\int_{C_j(Q)}\bigg|g- \barint_{\hspace{-6pt}Q} g \bigg|^2\ \bigg)^{1/2}\right)^{2}
\\
&
\lesssim  \sum_{Q\in \dyadic_t} \sum_{j\ge 0}  2^{-jN} \int_{C_j(Q)}  \bigg|g- \barint_{\hspace{-6pt}Q} g \bigg|^2
\\
&
\lesssim  \sum_{Q\in \dyadic_t} \sum_{j\ge 0}  2^{-jN} 2^{2j} \ell(Q)^2 \int_{2^{j+1}Q} |\nabla  g |^2
\\
&
\lesssim   t^2 \sum_{j\ge 0}  2^{-jN} 2^{2j} 2^{jn} \int_{\R^n} |\nabla  g |^2.
\\
&
\lesssim   t^2 \|\nabla  g \|^2.
\end{align*} 
We successively used the Minkowski inequality on the second line, (H6) on the third one,  Cauchy--Schwarz on the fourth,
Poincar\'e inequality on the fifth, 
the covering inequality $ \sum_{Q\in \dyadic_t}  {\bf 1}_{2^{j+1}Q} \lesssim 2^{jn}$ and $\ell(Q)\sim t$  on the sixth
and the choice $N> n+2$ in the last. 

Hence
$$\qe{\Theta_tP_tf-\gamma_t S_t P_t f}
  \lesssim \qe{t \nabla  P_t f} \lesssim \|f\|^2
  $$
  using the standard Littlewood-Paley inequality on each component of $f$. 
  \end{proof} 
  
  Before we state the conclusion of this reduction, there is an essential observation. Identifying constant functions with their values, observe  that $Dv$ takes values in the  vector space  $$\mD=\{DL: L\colon \R^n \to \C^m, L\  \text{a  polynomial of degree }k\} \subset \C^m$$  and so does $S_t(Dv)$. Therefore, one  considers the restriction of $\gamma_t(x)$ to $\mD$. Henceforth, we consider  $\gamma_t(x)$ as an element of $\mL(\mD, \C^m)$ and its norm $|\gamma_t(x)|$ is measured in this space.

Recall that  $\R^{n+1}_+\rightarrow \mL(\mD,\C^m), (x,t) \mapsto \gamma_t(x)$, is 
a {\em dyadic Carleson function} if there exists $C<\infty$
such that
$$
  \iint_{R(Q)} |\gamma_t(x)|^2 \frac{dxdt}t \le C^2 |Q|
$$
for all dyadic cubes $Q\subset\R^n$.
Here $R(Q):= Q\times (0,l(Q)]$ is the Carleson box over $Q$. We define the dyadic Carleson norm $\|\gamma_t\|_C$ to be the smallest
constant $C$.  The form of Carleson's lemma that we need and applied componentwise is as follows  (see \cite{AT}, p.168 and references therein). 
\begin{prop}  \label{lem:Carleson}
$$
  \qe{\gamma_t S_tDv} \lesssim \|\gamma_t\|_C^2 \| Dv\|^2, \quad v\in \dom(D).
$$
\end{prop}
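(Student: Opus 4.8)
The plan is to reduce Proposition~\ref{lem:Carleson} to the classical dyadic Carleson embedding theorem. Write $f:=Dv\in\mH$; by the remark preceding the statement, $f$ and hence $S_tf$ take values in the finite-dimensional space $\mD\subset\C^m$, so a.e.\ we have $|\gamma_t(x)S_tf(x)|\le|\gamma_t(x)|\,|S_tf(x)|$, where $|\gamma_t(x)|$ is the scalar norm of $\gamma_t(x)$ in $\mL(\mD,\C^m)$ (the quantity occurring in $\|\gamma_t\|_C$) and $|S_tf(x)|$ is the Euclidean norm of $S_tf(x)\in\C^m$. By the triangle inequality $|S_tf(x)|\le S_t(|f|)(x)$, and $\||f|\|_{L^2}=\|f\|_\mH$. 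Hence it suffices to prove the scalar dyadic Carleson embedding inequality: for nonnegative $g\in L^2(\R^n)$ and a scalar Carleson function $t\mapsto\eta_t$,
\begin{equation*}
  \qe{\eta_tS_tg}\lesssim\|\eta_t\|_C^2\,\|g\|^2 ;
\end{equation*}
the case $\|\gamma_t\|_C=\infty$ being vacuous, we assume $\|\gamma_t\|_C<\infty$.

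To prove this scalar inequality I would discretize the $t$-integral, using that $S_t$ is constant in $t$ on dyadic scales: for $t\in(2^{j-1},2^j]$ one has $S_tg(x)=g_Q$ for the cube $Q\in\dyadic_{2^j}$ containing $x$. Writing $d\mu(x,t):=\eta_t(x)^2\,\frac{dx\,dt}{t}$ for the associated measure on $\R^{n+1}_+$ and $W_Q:=Q\times(l(Q)/2,l(Q)]$ for the Whitney region over a dyadic cube $Q$, one gets
\begin{equation*}
  \qe{\eta_tS_tg}=\sum_{Q\in\dyadic}|g_Q|^2\,\mu(W_Q).
\end{equation*}
For any dyadic cube $Q_0$ the boxes $\{W_Q:Q\subseteq Q_0\}$ partition the Carleson box $R(Q_0)$, so $\sum_{Q\subseteq Q_0}\mu(W_Q)=\mu(R(Q_0))\le\|\eta_t\|_C^2\,|Q_0|$; thus the nonnegative numbers $a_Q:=\mu(W_Q)$ satisfy the dyadic packing condition $\sum_{Q\subseteq Q_0}a_Q\le A\,|Q_0|$ with $A:=\|\eta_t\|_C^2$.

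It remains to show that $\sum_{Q\in\dyadic}a_Q|g_Q|^2\lesssim A\,\|g\|^2$ whenever the $a_Q\ge0$ satisfy this packing condition, which is the heart of Carleson's lemma. I would argue by a layer-cake and stopping-time argument: writing $|g_Q|^2=\int_0^\infty\mathbf{1}_{\{|g_Q|^2>\lambda\}}\,d\lambda$ and interchanging the sum with the integral, one is reduced to estimating, for each fixed $\lambda>0$, the quantity $\sum_{Q:\,|g_Q|>\sqrt\lambda}a_Q$. The dyadic cubes $Q$ with $|g_Q|>\sqrt\lambda$ are exactly those contained in the maximal dyadic cubes $Q_k$ with that property, and $\bigcup_kQ_k=\{M_\dyadic g>\sqrt\lambda\}$, the level set of the dyadic maximal operator $M_\dyadic$. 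Since the $Q_k$ are pairwise disjoint, the packing condition yields $\sum_{Q:\,|g_Q|>\sqrt\lambda}a_Q\le\sum_k\sum_{Q\subseteq Q_k}a_Q\le A\sum_k|Q_k|=A\,|\{M_\dyadic g>\sqrt\lambda\}|$. Integrating in $\lambda$ and invoking the $L^2$-boundedness of $M_\dyadic$,
\begin{equation*}
  \sum_{Q\in\dyadic}a_Q|g_Q|^2\le A\int_0^\infty|\{M_\dyadic g>\sqrt\lambda\}|\,d\lambda=A\,\|M_\dyadic g\|_2^2\lesssim A\,\|g\|^2 ,
\end{equation*}
which is precisely the form of Carleson's lemma quoted from \cite{AT}.

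There is no genuine obstacle internal to this proof: after the reductions of the first two paragraphs it is standard harmonic analysis, the scalar statement being exactly the one quoted from \cite{AT}. The only delicate point is the very first reduction --- that $Dv$, and therefore $S_t(Dv)$, take values in the finite-dimensional subspace $\mD$, so that the relevant size of $\gamma_t$ is its $\mL(\mD,\C^m)$-norm and the estimate indeed features the Carleson norm $\|\gamma_t\|_C$. That this Carleson norm is finite --- the true difficulty of the whole theorem, to be established later by a $T(b)$ argument --- is not part of the present statement.
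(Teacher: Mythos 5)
Your proof is correct. Note that the paper does not actually prove this proposition: it is quoted as a known form of Carleson's lemma with a reference to \cite{AT}, p.~168, the phrase ``applied componentwise'' indicating exactly the kind of reduction to a scalar statement that you carry out. Your argument --- the pointwise bound $|\gamma_t(x)S_t(Dv)(x)|\le|\gamma_t(x)|\,S_t(|Dv|)(x)$ (legitimate because $\mD$ is a linear subspace, so the dyadic averages of a $\mD$-valued function stay in $\mD$ and the $\mL(\mD,\C^m)$-norm is the right one), followed by the discretization over Whitney boxes $W_Q$, the packing condition $\sum_{Q\subseteq Q_0}\mu(W_Q)=\mu(R(Q_0))\le\|\gamma_t\|_C^2|Q_0|$, and the layer-cake/stopping-time estimate via the dyadic maximal function --- is precisely the standard proof of the dyadic Carleson embedding theorem that the citation points to. One cosmetic point: the cubes with $|g_Q|>\sqrt\lambda$ are \emph{contained in}, not ``exactly'', the family of all dyadic subcubes of the maximal ones $Q_k$; but since you only use the inequality $\sum_{|g_Q|>\sqrt\lambda}a_Q\le\sum_k\sum_{Q\subseteq Q_k}a_Q$, this does not affect the argument. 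You are also right that the finiteness of $\|\gamma_t\|_C$ is not at issue here; that is the content of the subsequent $T(b)$ argument.
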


Therefore, we have obtained

\begin{prop} If  the restriction of $\gamma_t(x)$ to $\mD$ is a dyadic Carleson function then the conclusion of Theorem \ref{th:main} holds. 
\end{prop}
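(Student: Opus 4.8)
The plan is to assemble the three estimates established above. By the reduction carried out at the start of Section~\ref{sec:reduction} --- which uses the splitting $\mH=\clos{\ran(BD)}\oplus\nul(BD)$ of Proposition~\ref{prop:typeomega}, the vanishing of $Q_t^B$ on $\nul(BD)$, and the strict accretivity (H5) to pass from $u=BDv$ to $v$ --- it is enough to prove that
$$
\qe{\Theta_t Dv}\ \lesssim\ \|Dv\|^2, \qquad v\in\dom(D).
$$
Fixing $v\in\dom(D)$, I would use the triangle inequality in $L^2\big((0,\infty),\tfrac{dt}{t};\mH\big)$ to write
$$
\Big(\qe{\Theta_t Dv}\Big)^{1/2}\ \le\ \Big(\qe{\Theta_t Dv-\gamma_t S_t Dv}\Big)^{1/2}+\Big(\qe{\gamma_t S_t Dv}\Big)^{1/2}.
$$

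For the first term on the right, I invoke the principal part approximation \eqref{eq:ppa} of Lemma~\ref{lem:ppa} (which already folds in Proposition~\ref{whatisneeded} and the comparison of $P_t$ with the dyadic average $S_t$), giving a bound by $\|Dv\|$. For the second term, the essential observation recorded just before Proposition~\ref{lem:Carleson} is that $Dv$, and hence $S_t Dv$, is valued in the finite-dimensional subspace $\mD\subset\C^m$; thus only $\gamma_t(x)$ viewed as an element of $\mL(\mD,\C^m)$ matters, and Proposition~\ref{lem:Carleson} yields $\qe{\gamma_t S_t Dv}\lesssim\|\gamma_t\|_C^2\,\|Dv\|^2$. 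Under the hypothesis of the proposition, $\|\gamma_t\|_C<\infty$, so adding the two bounds gives $\qe{\Theta_t Dv}\lesssim(1+\|\gamma_t\|_C^2)\|Dv\|^2\lesssim\|Dv\|^2$, with implicit constants depending only on those in (H) and on $\|\gamma_t\|_C$. Tracing back through the reduction then gives \eqref{eq:sfBD} for all $u\in\mH$, which is Theorem~\ref{th:main}.

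I expect no real obstacle here: this proposition is purely organisational, recording how Proposition~\ref{whatisneeded}, Lemma~\ref{lem:ppa}, and Proposition~\ref{lem:Carleson} combine. The only points requiring a word of care are the density/limiting arguments hidden in the reduction of Section~\ref{sec:reduction} (extending $u\mapsto Q_t^B u$ from $\ran(BD)$ to $\clos{\ran(BD)}$ using the uniform boundedness of $Q_t^B$ from Proposition~\ref{prop:typeomega}) and the non-orthogonal projection onto $\clos{\ran(BD)}$ along $\nul(BD)$. The substantive work --- verifying that the restriction of $\gamma_t$ to $\mD$ is indeed a dyadic Carleson function, which is where the hypotheses (H1)--(H6) are genuinely exploited through a stopping-time/$T(b)$ argument --- is exactly what remains to be done in the rest of this section.
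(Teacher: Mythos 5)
Your proof is correct and is essentially identical to the paper's: the proposition is stated there as the summary of the preceding reduction, and its proof is precisely the combination of the passage to $u=BDv\in\ran(BD)$, the principal part approximation \eqref{eq:ppa} of Lemma~\ref{lem:ppa}, and Carleson's lemma (Proposition~\ref{lem:Carleson}) applied to $\gamma_t S_t Dv$ with $S_tDv$ valued in $\mD$. Nothing further is needed.
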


\begin{rem}   At this point, it is nowadays understood that the Carleson measure estimate can be achieved by what is called a $T(b)$ argument, which consists in finding suitable test functions adapted to the operator $\Theta_{t}=Q_{t}B$. However, a dichotomy appears on remarking that  we can prove \eqref{eq:ppa} for functions of the form $f=Dv\in \clos{\ran(D)}$, but not for functions $f\in \nul(D)$. 
This comes from the use of (H3) in Proposition~\ref{whatisneeded}.  The simple situation is when $D$ is one-one (or, equivalently, $D$ has dense range by \eqref{eq:splitting}): the test functions are simply the columns of $B^{-1}$. 
When $D$ fails to be one-one, this choice does not work as we have to select test functions in the range of $D$.  For the  Kato problem, one has one-oneness of the $D$ involved only in one dimension.  It is for this reason that the Kato problem was more difficult in dimensions $n\geq2$ than in one dimension.  For a fair comparison, we provide the concluding argument of the proof of Theorem \ref{th:main} in  both cases.
\end{rem}

\subsection{The T(b) argument when $D$ is injective with dense range}\label{sec:injective}    

Fix a dyadic cube $Q$, and let $\eta_Q$
be a smooth real valued cutoff such that $\eta_Q|_{2Q}=1$, $\supp(\eta_Q)\subset 3Q$
and $\|\nabla^j \eta_Q\|_\infty\lesssim l^{-j}$ for $j=1,2,\ldots, k$ with $l=l(Q)$.
Denote by $B^{-1}_j$ the $j$'th column vector in the matrix $B^{-1}$, and estimate
\begin{multline*}
  \iint_{R(Q)} |\gamma_t(x)|^2 \frac{dxdt}t
\lesssim   \iint_{R(Q)} |\gamma_t(x)S_t(\eta_Q B^{-1})|^2 \frac{dxdt}t \\
\lesssim   \sum_{j=1}^m\iint_{R(Q)} |\gamma_t(x)S_t(\eta_Q B^{-1}_j)|^2 \frac{dxdt}t \\
\lesssim   \sum_{j=1}^m\iint_{R(Q)} |\Theta_t(\eta_Q B^{-1}_j)|^2 \frac{dxdt}t +
 \sum_{j=1}^m\int_0^\infty \|(\Theta_t-\gamma_t(x)S_t)(\eta_Q B^{-1}_j)\|^2 \frac{dt}t \\
\lesssim \sum_{j=1}^m\int_0^{l(Q)} \| (1+t^{2k}(BD)^2)^{-1} t^k BD(\eta_Q e_j)\|^2 \frac{dt}t +  \sum_{j=1}^m \|\eta_Q B^{-1}_j\|^2\\
\lesssim \sum_{j=1}^m\int_0^{l(Q)} \|D(\eta_Q e_j)\|^2 t^{2k-1} dt + |Q| \lesssim |Q|.
\end{multline*}
For the first row, we use the fact that $B^{-1}$ is strictly accretive on $\mH$, and hence is pointwise uniformly strictly accretive. Here
$S_t$ acts componentwise on the 
matrix.
In the first term of row four we write $B\eta_Q B^{-1}_j= \eta_Q BB^{-1}_j =\eta_Q e_j$, where $e_j$ is the $j$'th standard basis vector  in $\C^m$. To obtain the second term, we 
apply the principal part approximation  (\ref{eq:ppa}), using the assumption that
the range of $D$ is dense in $\mH$ (hence $Dv$  there can be replaced by {\bf any}  function in $\mH$).
In row five we use the uniform boundedness of the operators $(1+t^{2k}(BD)^2)^{-1}$
and that $D(\eta_Q e_j)$ is supported on $3Q$ and is bounded by $l^{-k}$.

\subsection{The T(b) argument in the general case}    
\label{sec:tb}
 We now consider the general case where $D$ is not an injective operator with dense range in $\mH$, so that we need
to construct test functions which belong to the range of $D$.
Fix $Q$ a dyadic cube and $w\in \mD$ with $|w|=1$. 
 Let  $L$ be a  polynomial of degree $k$  such that 
$w= D L$ and $\sup_{3Q}|\partial^\alpha L(x)|\lesssim l^{k-|\alpha|}$, $0\leq |\alpha|\leq k-1$  and
define $w_Q := D(\eta_Q L)$,  where $\eta_Q$ is the cutoff above. 
It follows that
$$
  w_Q\in\ran(D),\quad
  w_Q|_{2Q}= w,\quad
  \supp w_Q\subset 3Q\quad\text{and}\quad 
  \|w_Q\|_\infty\le C.
$$
Next we define the test function $b^w_{Q,\epsilon}$ for $\epsilon \in (0,1)$  by
$$  b^w_{Q,\epsilon}: = Dv^w_{Q,\epsilon}, \quad v^w_{Q,\epsilon}:= (I+i(\epsilon l)^k BD)^{-1}(\eta_QL).  
$$
\begin{lem}\label{lem:testfunctions} 
 There exists $C>0$ such that  for each $w\in \mD$ with $|w|=1$, each dyadic cube $Q\subset\R^n$ and each $\epsilon\in (0,1)$,
 \begin{equation}\label{eq:1}
\int_Q|v^w _{Q,\epsilon} - L|^2  \le C (\epsilon l)^{2k}|Q|,
\end{equation}
\begin{equation}\label{eq:2}
\int_Q|b^w _{Q,\epsilon} - w|^2  \le C |Q|,
\end{equation}
\begin{equation}\label{eq:3}
 \left| \barint_{\hspace{-6pt}Q} b^w_{Q,\epsilon} - w \right| \le
   C  \sqrt\epsilon,
\end{equation}
\begin{equation}\label{eq:4}
\iint_{R(Q)} |\gamma_t(x) S_tb^w _{Q,\epsilon}(x)|^2 \, \frac{dxdt}{t} \le C\epsilon^{-2k} |Q|.
\end{equation}
\end{lem}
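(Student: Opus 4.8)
The plan is to establish the four estimates in Lemma~\ref{lem:testfunctions} essentially in the order stated, since each one feeds into the next, with the properties of $w_Q = D(\eta_Q L)$ recorded above as the starting point.

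\textbf{Estimate \eqref{eq:1}.} Write $v^w_{Q,\epsilon} - \eta_Q L = (R^B_{(\epsilon l)^k} - I)(\eta_Q L) = -i(\epsilon l)^k BD R^B_{(\epsilon l)^k}(\eta_Q L)$. Since $DR^B_s = D(I+isBD)^{-1}$ and $R^B_s$ is uniformly bounded (Proposition~\ref{prop:typeomega}), I would commute $D$ past the resolvent and bound $\|v^w_{Q,\epsilon} - \eta_Q L\| \lesssim (\epsilon l)^k \|D(\eta_Q L)\| = (\epsilon l)^k \|w_Q\| \lesssim (\epsilon l)^k |Q|^{1/2}$, using $\supp w_Q \subset 3Q$ and $\|w_Q\|_\infty \le C$. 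On $Q$ we have $\eta_Q L = L$, so this gives \eqref{eq:1} after noting the integral is over $Q$ only.

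\textbf{Estimate \eqref{eq:2}.} Apply $D$ to the previous identity: $b^w_{Q,\epsilon} - w_Q = D v^w_{Q,\epsilon} - w_Q = -i(\epsilon l)^k BD \cdot D R^B_{(\epsilon l)^k}(\eta_Q L)$. This has the form $(\epsilon l)^k BD (BD)' \cdots$ — more carefully, $b^w_{Q,\epsilon} - w_Q = (\epsilon l)^{-k}\big[(\epsilon l)^k BD\big] \cdot (\epsilon l)^k D R^B_{(\epsilon l)^k} (\eta_Q L)$, but the clean way is: $b - w_Q = BD(v - \eta_Q L)\cdot(\text{factor})$; instead just use $v^w_{Q,\epsilon} = R^B_{(\epsilon l)^k}(\eta_Q L)$ so $b^w_{Q,\epsilon} = D R^B_{(\epsilon l)^k}(\eta_Q L)$, and $b^w_{Q,\epsilon} - w_Q = D(R^B_{(\epsilon l)^k}-I)(\eta_Q L) = -i(\epsilon l)^k DBD R^B_{(\epsilon l)^k}(\eta_Q L) = -i(\epsilon l)^k (DB)\, b^w_{Q,\epsilon}$, which is awkward because $DB$ is unbounded. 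The better route: $Q_t^B B = t^k BDBD(I+t^{2k}BDBD)^{-1}B$ type identities. Concretely I would instead write $b^w_{Q,\epsilon} - w_Q$ using $\Theta$-type operators acting on $w_Q$: since $v^w_{Q,\epsilon} - \eta_Q L$ solves the resolvent equation, $D(v^w_{Q,\epsilon}-\eta_Q L) = -i(\epsilon l)^k BD\, D(v^w_{Q,\epsilon})$ is not bounded termwise, so the honest approach is $b^w_{Q,\epsilon} = (I+i(\epsilon l)^k BD)^{-1} w_Q$ — check: $D(I+i s BD)^{-1} = (I+isDB)^{-1}D$ on the appropriate domains? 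No; rather $BD(I+isBD)^{-1}(\eta_Q L)$ and $D$ doesn't commute through $B$. The cleanest: apply $D$ to $v^w_{Q,\epsilon}$ and use $D(I+isBD)^{-1}=(I+isDB)^{-1}D$ (valid since $D(I+isBD)=( I+isDB)D$), giving $b^w_{Q,\epsilon}=(I+i(\epsilon l)^k DB)^{-1}w_Q$, hence $b^w_{Q,\epsilon}-w_Q = -i(\epsilon l)^k DB(I+i(\epsilon l)^k DB)^{-1}w_Q$, and $(\epsilon l)^k DB(I+i(\epsilon l)^k DB)^{-1}$ is uniformly bounded because $DB$ is bisectorial (it is similar to $BD$ via $B$, or directly: $B^{-1}$ strictly accretive). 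So $\|b^w_{Q,\epsilon}-w_Q\|\lesssim\|w_Q\|\lesssim|Q|^{1/2}$, and since on $Q$ we have $w_Q=w$, \eqref{eq:2} follows.

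\textbf{Estimate \eqref{eq:3}.} This is the delicate one and I expect it to be the main obstacle. I would write $\barint_Q b^w_{Q,\epsilon} - w = \barint_Q (b^w_{Q,\epsilon} - w_Q)$ and exploit that $b^w_{Q,\epsilon}-w_Q = -i(\epsilon l)^k DB\,b^w_{Q,\epsilon}$ is itself in the range of $D$, so integrating against the constant $\mathbf 1_Q$ lets one integrate by parts: $\int_Q D(\text{something}) $ becomes a boundary term controlled by the cutoff scale. More precisely, $b^w_{Q,\epsilon}-w_Q = D\big((\epsilon l)^k(\cdots)\big)$, and $D$ being a $k$th order constant-coefficient operator, $\int_Q D g = \int_Q D g$ picks up, after $k$ integrations by parts against $\mathbf 1_Q$ (or better against a smooth cutoff $\tilde\eta_Q$ equal to $1$ on $Q$ and supported in $2Q$ to avoid the sharp boundary of $Q$), terms $\int (\partial^\alpha\tilde\eta_Q)(\cdots)$ with $|\alpha|\ge1$, each gaining a factor $l^{-|\alpha|}$. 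Combining with the $L^2$ bound from \eqref{eq:1} (which gives smallness $(\epsilon l)^k$ for $v^w_{Q,\epsilon}-\eta_Q L$) one gets a net power of $\epsilon$; balancing the worst term yields exactly $\sqrt\epsilon$. I would be careful to use the $k$-th order nature of $D$ and the localization of $v^w_{Q,\epsilon}-\eta_Q L$ (which is only \emph{approximately} localized — it has tails, controlled by (H6) — so one splits into the part near $3Q$ and far tails, the far tails contributing $\epsilon^N$ for large $N$).

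\textbf{Estimate \eqref{eq:4}.} Finally, use $\gamma_t S_t b^w_{Q,\epsilon} = \gamma_t S_t(b^w_{Q,\epsilon}-w) + \gamma_t S_t w$ on $R(Q)$. For the second piece, $\gamma_t S_t w = \gamma_t w = \Theta_t w$ pointwise (since $w$ is constant and $S_t$ averages it to itself), and $\iint_{R(Q)}|\Theta_t w|^2\frac{dxdt}{t}$: here $\Theta_t w = Q_t^B(Bw)$ and one writes $Q_t^B = (t^kBD)(\ldots)$, so for $t\le l(Q)$ one bounds $\int_0^{l(Q)}\|t^k BD(\ldots)Bw\|_{L^2}^2\frac{dt}{t}$ — but $Bw$ is a bounded function not in $L^2$, so one localizes via (H6) exactly as in Lemma~\ref{lem:gammat}, getting a bound $\lesssim \int_0^{l(Q)} t^{2k}\cdot l^{-2k}\cdots$? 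More directly: $\Theta_t w = t^k BDBD(I+t^{2k}(BD)^2)^{-1}(t^{-k}\cdot\text{?})$. The point is $\gamma_t$ itself is what we are trying to bound; so instead for \eqref{eq:4} I would \emph{not} expand around $w$ but simply use $\|\gamma_t S_t\|\lesssim 1$ (Lemma~\ref{lem:gammat}) to get $\iint_{R(Q)}|\gamma_t S_t b^w_{Q,\epsilon}|^2\frac{dxdt}{t}\lesssim \int_0^{l(Q)}\|\mathbf 1_{?}b^w_{Q,\epsilon}\|^2\frac{dt}{t}$ — but $\int_0^{l(Q)}\frac{dt}{t}$ diverges, so one needs the $t$-dependence. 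The resolution is to write $b^w_{Q,\epsilon} = -i(\epsilon l)^k DB\,b^w_{Q,\epsilon} + w_Q$; for the $w_Q$ part one uses that $\gamma_t S_t w_Q = \Theta_t(P_t\text{-type})$... — honestly, the efficient path is: $\gamma_t(x)S_t b^w_{Q,\epsilon} = \gamma_t(x)[S_t b^w_{Q,\epsilon} - b^w_{Q,\epsilon}(x)\text{-avg}] $ is not it either. I would instead invoke that $v^w_{Q,\epsilon}\in\dom(D)$ and apply the principal part approximation \eqref{eq:ppa} backwards: $\iint_{R(Q)}|\gamma_t S_t D v^w_{Q,\epsilon}|^2\frac{dxdt}{t} \lesssim \iint|\Theta_t D v^w_{Q,\epsilon}|^2\frac{dxdt}{t} + \|Dv^w_{Q,\epsilon}\|^2$, and $\Theta_t D v^w_{Q,\epsilon} = \Theta_t b^w_{Q,\epsilon} = Q_t^B B b^w_{Q,\epsilon}$, with $\|Q_t^B B b^w_{Q,\epsilon}\|\lesssim\|b^w_{Q,\epsilon}\|$; then $\|b^w_{Q,\epsilon}\| = \|Dv^w_{Q,\epsilon}\|\lesssim\|D(\eta_Q L)\|\lesssim|Q|^{1/2}$ by boundedness of $(\epsilon l)$-independent... no, $DR^B_s$ is not uniformly bounded as a map, but $b^w_{Q,\epsilon}=(I+i(\epsilon l)^k DB)^{-1}w_Q$ IS bounded by $\|w_Q\|\lesssim|Q|^{1/2}$. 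So $\int_0^{l(Q)}\|b^w_{Q,\epsilon}\|^2\frac{dt}{t}$ still diverges logarithmically at $t=0$ — this is precisely where the factor $\epsilon^{-2k}$ and a more careful estimate $\|Q_t^B B b^w_{Q,\epsilon}\|\lesssim \min(1, (t/(\epsilon l))^{?})\cdots$ must enter. I would bound $\|\Theta_t b^w_{Q,\epsilon}\| = \|t^k(BD)(I+t^{2k}(BD)^2)^{-1}B\,b^w_{Q,\epsilon}\|$ and, using that $b^w_{Q,\epsilon}$ is itself of the form (resolvent at scale $\epsilon l$) applied to $w_Q$, estimate $\|BD\cdot\text{(res)}_t\cdot B\cdot\text{(res)}_{\epsilon l}w_Q\|\lesssim \frac{1}{\epsilon l}\|w_Q\|$ when $t\gtrsim \epsilon l$ and $\lesssim\frac{t^k}{(\epsilon l)^{?}}$-type for small $t$, so that $\int_0^{l(Q)}\|\Theta_t b^w_{Q,\epsilon}\|^2\frac{dt}{t}\lesssim \frac{1}{(\epsilon l)^{2k}}\int_0^{l(Q)} t^{2k}\frac{dt}{t}\cdot\|w_Q\|_\infty^2|3Q| \lesssim \epsilon^{-2k}|Q|$, which is the claimed bound. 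The far-field tails of $b^w_{Q,\epsilon}$ outside $3Q$ are again handled by (H6) and contribute lower order.

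Thus the main obstacle is genuinely \eqref{eq:3}: obtaining the precise power $\sqrt\epsilon$ requires carefully integrating by parts $k$ times using the constant-coefficient structure of $D$ and the smooth cutoff, then balancing the resulting $l^{-|\alpha|}$ gains against the $(\epsilon l)^k$ smallness from \eqref{eq:1}, while controlling the non-local tails of the resolvent via (H6). Estimates \eqref{eq:1}, \eqref{eq:2}, \eqref{eq:4} are more routine once the commutation identity $D(I+isBD)^{-1}=(I+isDB)^{-1}D$ and the bisectoriality of $DB$ (equivalently, of $BD$) are in hand.
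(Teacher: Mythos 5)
Your treatments of \eqref{eq:1}, \eqref{eq:2} and \eqref{eq:4} land, after some detours, on the paper's own argument: the resolvent identity applied to $\eta_QL\in\dom(BD)$ giving $v^w_{Q,\epsilon}-\eta_QL=-i(\epsilon l)^k(I+i(\epsilon l)^kBD)^{-1}(Bw_Q)$, the intertwining $b^w_{Q,\epsilon}=(I+i(\epsilon l)^kDB)^{-1}w_Q$ together with uniform boundedness of $sDB(I+isDB)^{-1}$, and, for \eqref{eq:4}, the principal part approximation \eqref{eq:ppa} run backwards followed by the bound $\|\Theta_tb^w_{Q,\epsilon}\|\lesssim (t/\epsilon l)^k|Q|^{1/2}$ obtained by factoring $\Theta_tb^w_{Q,\epsilon}=(t/\epsilon l)^k(I+t^{2k}BDBD)^{-1}\,(\epsilon l)^kBD(I+i(\epsilon l)^kBD)^{-1}(Bw_Q)$ and integrating $\int_0^l(t/\epsilon l)^{2k}\,dt/t\approx\epsilon^{-2k}$. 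No off-diagonal/tail analysis is needed for these steps.

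The gap is in \eqref{eq:3}, which you rightly identify as the crux but do not actually resolve. Your proposed cutoff $\tilde\eta_Q$ equal to $1$ on $Q$ and supported in $2Q$, with derivative gains $l^{-|\alpha|}$, fails for a concrete reason: replacing $\mathbf 1_Q$ by $\tilde\eta_Q$ introduces the error $\frac1{|Q|}\int_{2Q\setminus Q}\tilde\eta_Q\,(b^w_{Q,\epsilon}-w_Q)$, and the only control available on $2Q\setminus Q$ is the $O(1)$ $L^2$ bound of type \eqref{eq:2}, so by Cauchy--Schwarz this error is $O(1)$, not $O(\sqrt\epsilon)$; there is nothing to ``balance''. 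The correct mechanism (as in the paper) is a cutoff $\varphi$ shrinking \emph{inward}: $\varphi=1$ on $(1-t)Q$, $\varphi=0$ off $Q$, $\|\nabla^k\varphi\|_\infty\lesssim(tl)^{-k}$, with $t\in(0,1)$ a free parameter. Splitting $\barint_{\hspace{-6pt}Q}(b^w_{Q,\epsilon}-w)=\barint_{\hspace{-6pt}Q}\varphi\,D(v^w_{Q,\epsilon}-L)+\barint_{\hspace{-6pt}Q}(1-\varphi)(b^w_{Q,\epsilon}-w)$, the second term is $\lesssim\sqrt t$ by \eqref{eq:2} and Cauchy--Schwarz on the shell $Q\setminus(1-t)Q$ of measure $\sim t|Q|$, while integrating by parts in the first term (all derivatives land on $\varphi$, supported in that same shell) and using \eqref{eq:1} gives $\lesssim (tl)^{-k}\,(\epsilon l)^k|Q|^{1/2}\,(t|Q|)^{1/2}/|Q|=\epsilon^k/t^{k-1/2}$. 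Choosing $t=\epsilon$ balances the two at $\sqrt\epsilon$. Without introducing this second scale $tl$ for the shell width, the exponent $\tfrac12$ in \eqref{eq:3} cannot be produced, and since \eqref{eq:3} is exactly what drives the type-1 stopping-time count in Lemma~\ref{lem:stopping}, this is not a cosmetic omission.
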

\begin{proof} Using $(I+is BD)^{-1}-I = -isBD(I+is BD)^{-1}$ and  $\eta_QL\in \dom(BD)$, we have 
$$ v^w _{Q,\epsilon} - \eta_QL= -i(\epsilon l)^k(I+i(\epsilon l)^k BD)^{-1}(Bw_Q).$$
The properties of $w_Q$ and $\eta_Q$ and the boundedness of  $(I+is BD)^{-1}B$ imply  \eqref{eq:1}. Applying $D$ we get, 
$$
b^w _{Q,\epsilon} - w_Q= -i(\epsilon l)^k D(I+i(\epsilon l)^k BD)^{-1}(Bw_Q).
$$
The properties of $w_Q$ and the boundedness of $s D(I+is BD)^{-1}B$ imply \eqref{eq:2}.

Next, let $\varphi\colon \R^n\to [0,1]$ be a smooth function which is 1 on $(1-t)Q$, 0 on $Q^c$
with $\|\nabla^k \varphi\|_\infty \le C(tl)^{-k}$ with $t\in (0,1)$ to be chosen. We can write
$$
\barint_{\hspace{-6pt}Q} b^w_{Q,\epsilon} - w
= \barint_{\hspace{-6pt}Q} \varphi D(v^w_{Q,\epsilon} - L) + \barint_{\hspace{-6pt}Q} (1-\varphi)(b^w_{Q,\epsilon} - w)
=I+II.
$$
Using \eqref{eq:2} and the properties of $\varphi$ together with Cauchy-Schwarz inequality, we obtain
$$
| II | \le C\sqrt t.
$$
For $I$, we can write using the properties  $\varphi$ and integration by parts,
$$
\int_Q \varphi D(v^w_{Q,\epsilon} - L)= \int_{\R^n} \varphi D(v^w_{Q,\epsilon} - \eta_QL) = \int_{\R^n} (\widetilde{D}\varphi)(v^w_{Q,\epsilon} - \eta_QL) = \int_{Q} (\widetilde{D}\varphi)(v^w_{Q,\epsilon} - L)
$$ where $\widetilde{D}\varphi$ is some $\mL(\C^m,\C^m)$-valued function bounded by $C\|\nabla^k \varphi\|_\infty$ and supported in $Q\setminus (1-t)Q$, so that  we obtain
$$
| I  | \le  C\epsilon^k/ t^{k-1/2}.
$$
Hence, choosing $t=\epsilon$, we have shown \eqref{eq:3}.

Eventually, to prove \eqref{eq:4}, we can use the principal part approximation in Lemma \ref{lem:ppa} (backwards) because $b^w _{Q,\epsilon}=Dv^w _{Q,\epsilon}$ and $\|b^w _{Q,\epsilon}\| \lesssim 1$
and it suffices to establish
\begin{equation}\label{eq:5}
\iint_{R(Q)} |\Theta_tb^w _{Q,\epsilon}(x)|^2 \, \frac{dxdt}{t} \le C\epsilon^{-2k} |Q|.
\end{equation}
Now, 
\begin{align*}
\Theta_tb^w _{Q,\epsilon}&= t^kBD(I + t^{2k}BDBD)^{-1}BD(I+i(\epsilon l)^k BD)^{-1}(\eta_QL)
\\
&= t^kBD(I + t^{2k}BDBD)^{-1}(I+i(\epsilon l)^k BD)^{-1}(Bw_Q).
\\
&
=(t/\epsilon l)^k (I + t^{2k}BDBD)^{-1}(\epsilon l)^k BD(I+i(\epsilon l)^k BD)^{-1}(Bw_Q)
\end{align*}
Since $(I + t^{2k}BDBD)^{-1}$ and $(\epsilon l)^k BD(I+i(\epsilon l)^k BD)^{-1}$ are bounded uniformly with respect to $t$ and $\epsilon l$, we have
$$
\|\Theta_tb^w _{Q,\epsilon}\| \le C(t/\epsilon l)^k.
$$
Integrating over $t\in (0, l]$ we obtain \eqref{eq:5}.
\end{proof}

We now perform a sectorial decomposition and then a stopping-time argument to estimate the dyadic Carleson norm on $\gamma_t(x)$. Cover $\mL(\mD,\C^m)$ by a finite number of sectors 
$\mC_{\gamma, \nu} =\{\kappa \in  \mL(\mD,\C^m) \, ; \, |\kappa - |\kappa| \gamma| \le \nu |\kappa|\}$, with $\gamma\in \mL(\mD,\C^m)$, $|\gamma|=1$, and $\nu\in (0,1)$. The number $\nu$ is to be chosen later.
Fix such a sector. It is enough to estimate the Carleson norm of
$$
\tilde \gamma_t(x)= {\bf 1}_{\gamma_t(x) \in \mC_{\gamma, \nu}} \gamma_t(x).
$$
Pick $w\in \mD, w^*\in \C^m$ such that $(\gamma w, w^*)=1$ and $|w|=|w^*|=1$. For any $\kappa \in 
\mC_{\gamma, \nu}$, we have
$$
\re \big[|\kappa| (\gamma w, w^*) - (\kappa w, w^*) \big] \le \nu |\kappa|
$$
thus
$$
(1-\nu) |\kappa| \le \re (\kappa w, w^*).
$$
Fix a cube $Q$. Applying this to $\tilde \gamma_t(x)$ with $(x,t)\in R(Q)$, we obtain
\begin{align*}
(1-\nu)|\tilde \gamma_t(x)| 
&\le   \re (\tilde \gamma_t(x) w, w^*)
\\
&\le \re (\tilde \gamma_t(x) S_tb_{Q,\epsilon}^w(x), w^*) + 
|\tilde \gamma_t(x)| \re ( \gamma (w-S_tb_{Q,\epsilon}^w(x)), w^*)\\
& \qquad \qquad +
\re ((\tilde \gamma_t(x)-|\tilde \gamma_t(x)|  \gamma) (w-S_tb_{Q,\epsilon}^w(x)), w^*)
\\
&\le |\gamma_t(x) S_tb_{Q,\epsilon}^w(x)| + |\tilde \gamma_t(x)| \re ( \gamma (w-S_tb_{Q,\epsilon}^w(x)), w^*) 
\\
&\qquad\qquad + \nu |\tilde \gamma_t(x)| |w-S_tb_{Q,\epsilon}^w(x)|.
\end{align*}

Thus one needs smallness on $\re ( \gamma (w-S_tb_{Q,\epsilon}^w(x)), w^*) $ and a control
on the size of $|w-S_tb_{Q,\epsilon}^w(x))|$ on a large portion of $R(Q)$.

\begin{lem}\label{lem:stopping}  There exists $\epsilon_0 \in (0,1)$ such that for all $\epsilon\in (0,\epsilon_0)$,  any dyadic cube $Q$ contains disjoint dyadic subcubes $Q_i$ with 
\begin{equation} \label{eq:stopping1}
\sum_{i} |Q_i| \le \left(1-\epsilon\right) |Q|,
\end{equation}
\begin{equation}\label{eq:stopping2}
\re ( \gamma (w-S_tb_{Q,\epsilon}^w(x)), w^*)  \le 10C  \sqrt \epsilon, \quad (x,t) \in R(Q) \setminus  \cup R(Q_i),
\end{equation}
\begin{equation}\label{eq:stopping3}
|w-S_tb_{Q,\epsilon}^w(x)|  \le \sqrt{C/\epsilon},  \quad (x,t) \in R(Q) \setminus  \cup R(Q_i).
\end{equation}
Here, $C$ is the constant appearing in Lemma  \ref{lem:testfunctions}.
\end{lem}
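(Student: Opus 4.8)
The plan is to construct the subcubes $Q_i$ by a standard stopping-time selection applied to the function $b^w_{Q,\epsilon}$, using the three quantitative estimates \eqref{eq:2}, \eqref{eq:3}, \eqref{eq:4} of Lemma \ref{lem:testfunctions} as the input. The key point is that \eqref{eq:3} says the \emph{global} average of $b^w_{Q,\epsilon}$ over $Q$ is within $C\sqrt\epsilon$ of $w$, so the set of dyadic subcubes where the average has drifted too far from $w$ must be small in measure, and on the complement of the Carleson boxes over those bad subcubes the dyadic average $S_t b^w_{Q,\epsilon}(x)$ stays close to $w$ in the two required senses.

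First I would fix $\epsilon\in(0,1)$ and run a stopping-time argument inside $Q$: select the maximal dyadic subcubes $R\subset Q$ for which \emph{either} $\re(\gamma(w - b^w_{Q,\epsilon,R}), w^*) > 10C\sqrt\epsilon$ \emph{or} $|w - b^w_{Q,\epsilon,R}| > \sqrt{C/\epsilon}$, where $b^w_{Q,\epsilon,R}$ denotes the average of $b^w_{Q,\epsilon}$ over $R$; call the selected cubes $\{Q_i\}$. By maximality these are disjoint, and if $(x,t)\in R(Q)\setminus\bigcup R(Q_i)$ then the dyadic cube of $\dyadic_t$ containing $x$ has not been stopped, so $S_t b^w_{Q,\epsilon}(x)$ equals an average over a non-stopped cube; this gives \eqref{eq:stopping2} and \eqref{eq:stopping3} directly from the definition of the stopping criteria. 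It remains to prove the measure bound \eqref{eq:stopping1}, and this is where the work lies.

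For the measure estimate I would bound $\sum_i|Q_i|$ by separately controlling the two families of stopped cubes. The cubes stopped by the size criterion $|w - b^w_{Q,\epsilon,R}|>\sqrt{C/\epsilon}$ have total measure at most $\epsilon/C\cdot\sum_i |Q_i|\,|w-b^w_{Q,\epsilon,Q_i}|^2 \le (\epsilon/C)\int_Q|b^w_{Q,\epsilon}-w|^2 \le \epsilon |Q|$ by \eqref{eq:2} and Jensen; for the cubes stopped by the oscillation criterion, I would test against $w^*$ and use that $x\mapsto \re(\gamma(w-b^w_{Q,\epsilon}(x)),w^*)$ has average over $Q$ at most $C\sqrt\epsilon$ in absolute value by \eqref{eq:3}, combined with the $L^2$ control \eqref{eq:2}, to show via a Chebyshev/John--Nirenberg-type argument that the bad set has measure $\le c\sqrt\epsilon\,|Q|$. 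Choosing the numerical constants ($10C$, $\sqrt{C/\epsilon}$, and then $\epsilon_0$) so that the two contributions together are $\le \epsilon|Q|$ yields \eqref{eq:stopping1}. The main obstacle is getting the oscillation-stopped cubes to contribute only $O(\sqrt\epsilon)|Q|$ rather than $O(1)|Q|$: a naive Chebyshev bound from \eqref{eq:3} alone gives a constant, not a small constant, so one must exploit that the stopping threshold $10C\sqrt\epsilon$ is a fixed multiple of the global average bound $C\sqrt\epsilon$ — i.e. this is really a weak-type/good-$\lambda$ argument where the factor $10$ provides the room, and one should present it carefully so that the final smallness in $\epsilon$ comes out with the correct power.
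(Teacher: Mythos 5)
Your stopping-time set-up is the same as the paper's: select maximal dyadic subcubes violating one of two criteria (an oscillation criterion with threshold $10C\sqrt\epsilon$ and a size criterion with threshold $\sqrt{C/\epsilon}$), get \eqref{eq:stopping2}--\eqref{eq:stopping3} by maximality, and control the size-stopped cubes by Chebyshev and \eqref{eq:2}, which correctly yields $\Sigma_2\le\epsilon|Q|$. (The paper stops on $\barint_{Q'}|w-b^w_{Q,\epsilon}|^2>C\epsilon^{-1}$ rather than on the averaged difference, but by Jensen this is an immaterial variant.)

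The gap is in the oscillation-stopped family, and it is not just a missing computation: the target you set yourself is false. From \eqref{eq:2} and \eqref{eq:3} alone, the function $h(x)=\re(\gamma(w-b^w_{Q,\epsilon}(x)),w^*)$ only has mean $O(\sqrt\epsilon)$ and $L^2$ average $O(1)$ on $Q$; such a function can equal $1$ on a dyadic child of $Q$ and $\approx -1$ on the rest, so the set where dyadic averages exceed $10C\sqrt\epsilon$ can have measure $|Q|/2$. No Chebyshev, John--Nirenberg or good-$\lambda$ argument will give $\Sigma_1\lesssim\sqrt\epsilon\,|Q|$, and insisting that ``the two contributions together are $\le\epsilon|Q|$'' is far stronger than \eqref{eq:stopping1} requires. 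The correct (and much weaker) statement is $\Sigma_1\le(1-c\epsilon)|Q|$, and the paper obtains it by a different mechanism: sum the stopping inequality over the type-1 cubes to get $10C\sqrt\epsilon\,\Sigma_1\le\sum_i\re(\gamma\int_{Q_i}(w-b^w_{Q,\epsilon}),w^*)$, write $\sum_i\int_{Q_i}=\int_Q-\int_{Q\setminus\cup Q_i}$, bound the full integral by $C\sqrt\epsilon\,|Q|$ using \eqref{eq:3} and the complementary integral by $\sqrt C\,(1-X)^{1/2}|Q|$ using Cauchy--Schwarz and \eqref{eq:2}, and solve the resulting inequality $10C\sqrt\epsilon X\le C\sqrt\epsilon+\sqrt C(1-X)^{1/2}$ for $X=\Sigma_1/|Q|$; the factor $10$ enters here, forcing $1-X\gtrsim\epsilon$. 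Adding $\Sigma_2\le\epsilon|Q|$ then gives \eqref{eq:stopping1} for $\epsilon$ small. You should replace your ``weak-type'' step by this summation argument.
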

Assuming this, then we obtain
$$
\left(1-\nu - 10C \sqrt\epsilon - \nu \sqrt{C/\epsilon}\right) |\tilde \gamma_t(x)| \le |\gamma_t(x) S_tb_{Q,\epsilon}^w(x)|,  \quad (x,t) \in R(Q) \setminus  \cup R(Q_i).
$$
Choosing  $\epsilon$ and  then $\nu$ small enough (depending only on $C$, hence on (H)), we have shown for all $Q$ with the corresponding $Q_i$
\begin{equation} \label{eq:ref1}
 |\tilde \gamma_t(x)| \le 2|\gamma_t(x) S_tb_{Q,\epsilon}^w(x)|,  \quad (x,t) \in R(Q) \setminus  \cup R(Q_i).
 \end{equation}

We finish with a classical  observation: fix $\delta>0$ and $$A_Q= \sup \frac{1}{|Q'|} \iint_{(x,t) \in R(Q'), t>\delta} |\tilde \gamma_t(x)|^2 \, \frac{dxdt}{t} <\infty$$
where the supremum is taken over all dyadic subcubes of $Q$. 
Then, if $Q'$ is such a cube and  $Q_i'$ are the subcubes of $Q'$ given by Lemma \ref{lem:stopping}
\begin{equation}\label{eq:ref2}
\begin{split}
&\iint_{(x,t) \in R(Q'), t>\delta} |\tilde \gamma_t(x)|^2 \, \frac{dxdt}{t} \\
&
\le 4 \iint_{(x,t) \in R(Q'), t>\delta} |\gamma_t(x) S_tb_{Q',\epsilon}^w(x)|^2 \, \frac{dxdt}{t} 
+ \sum_i \iint_{(x,t) \in R(Q_i'), t>\delta} |\tilde \gamma_t(x)|^2 \, \frac{dxdt}{t}
\\
&
\le 4 C\epsilon^{-2k} |Q'| + A_Q \sum_i |Q_i'|
\\
&
\le 4 C\epsilon^{-2k} |Q'| + A_Q (1-\epsilon) |Q'|.
\end{split}
\end{equation}
Hence, dividing by $|Q'|$ and taking the supremum over $Q'$ we obtain $A_Q\le 4C\epsilon^{-2k-1}$, 
and in particular
$$
\frac{1}{|Q|} \iint_{(x,t) \in R(Q), t>\delta} |\tilde \gamma_t(x)|^2 \, \frac{dxdt}{t} \le 4C\epsilon^{-2k-1}.
$$This is independent of $\delta>0$, hence we obtain the desired estimate by letting $\delta$ tend to 0. 

It remains to prove Lemma \ref{lem:stopping}.

\begin{proof} We fix a dyadic cube $Q$. We assume $\epsilon$ small. Observe that 
$$
\re (\gamma \left(w-\barint_{\hspace{-6pt}Q} b^w_{Q,\epsilon}\right), w^*) \le C \epsilon^{1/2}
$$
and 
$$
\barint_{\hspace{-6pt}Q}|w-b^w _{Q,\epsilon}|^2  \le   C.$$
We subdivide dyadically $Q$ and consider for the  subcubes $Q'$ both conditions
 \begin{equation}
\label{eq:type1}
\re (\gamma \left(w-\barint_{\hspace{-6pt}Q'} b^w_{Q,\epsilon}\right), w^*) > 10 C \epsilon^{1/2},
\end{equation}
\begin{equation}
\label{eq:type2}
\barint_{\hspace{-6pt}Q'}|w-b^w _{Q,\epsilon}|^2  >  C\epsilon^{-1}.
\end{equation}
If one or the other holds, we stop and put $Q'$ in the sought collection of stopping cubes  $(Q_i)$. If none of the conditions hold, we subdivide $Q'$ and iterate the test on subcubes. 

We note that $ (x,t) \in R(Q) \setminus  \cup R(Q_i)$ exactly means that  $w-S_tb_{Q,\epsilon}^w(x)= w-\barint_{\hspace{-3pt}Q'} b^w_{Q,\epsilon}$ for a non-stopping cube $Q'$. Thus \eqref{eq:stopping2} and \eqref{eq:stopping3} hold immediately.

It remains to show \eqref{eq:stopping1}. Declare $Q_i$  of type 1 if \eqref{eq:type1} holds and
of type 2 if \eqref{eq:type2} holds. We let $\Sigma_j= \sum |Q_i|$ where the sum is retricted to cubes of type $j$. We might count twice cubes of both types  but that is not a problem. For cubes of type 2, we have
$$
\Sigma_2 \le \frac {\epsilon} C \sum \int_{Q_i}|w-b^w _{Q,\epsilon}|^2 \le  \frac \epsilon C  \int_{Q}|w-b^w _{Q,\epsilon}|^2 \le  {\epsilon}  |Q|.
$$
For cubes of type 1, we have
\begin{align*}
10C \epsilon^{1/2} \ \Sigma_1 &\le  \sum \re (\gamma \bigg(\int_{Q_i} w- b^w_{Q,\epsilon} \bigg), w^*)
\\
&=  \re (\gamma \bigg(\int_{Q} w- b^w_{Q,\epsilon}\bigg), w^*) - \re (\gamma\bigg(\int_{Q\setminus \cup Q_i} w- b^w_{Q,\epsilon}\bigg), w^*).
\end{align*}
Using \eqref{eq:3} and the Cauchy-Schwarz inequality for the last term, we obtain
$$
10C\epsilon^{1/2} X \le C\epsilon^{1/2}  + \sqrt C (1-X)^{1/2}
$$
where $X = \Sigma_1/ |Q| \in [0,1]$. The positive root of the corresponding equation is on the order of $1-81 C \epsilon$ for $\epsilon $ small enough. Hence, 
$$
X\le 1- C\epsilon
$$
for $\epsilon $ small enough.
Thus, the total contribution of cubes of both types does not exceed
$
 (1-C\epsilon +\epsilon) |Q|,
$
which gives \eqref{eq:stopping1}  (assuming $C\ge 2$ which we may).
 \end{proof}

 \section{Historical comments} The (almost) self-contained proof of Theorem \ref{th:main}  follows very closely the strategy of \cite{AHLMcT} which, of course, builds upon the ideas of many authors, and it also incorporates ideas from the various extensions of this argument found later on.  Locating the origin of this and that  can be subtle for the reader, so we devote this section to historical comments,  giving appropriate credit for the crucial steps based on our understanding. We do not mention less recent progress and refer to \cite{AT, AHLMcT} for this. 
 
A strategy to solve the Kato conjecture in all dimensions was introduced and developed in \cite{AT} under kernel bound assumptions. It first involves the reduction to Carleson measures in such a context - which was named ``principal part approximation'' in \cite{AKMc} -  as  described in 
 Section \ref{sec:reduction}, exploiting earlier ideas of Coifman-Meyer \cite{CM} further elaborated in works of Christ-Journ\'e and Semmes \cite{CJ,S}. The present formulation of the principal part approximation is closer to the one in \cite{AKMc}. We have chosen this formulation for the simplicity of its proof (assuming minimal knowledge of Littlewood-Paley-Stein theory). The strategy of \cite{AT} required  the existence of a set of appropriate test functions  in order to prove the Carleson bounds via the 
  ``T(b) theorem for square roots''. In \cite{AT}, Chapter 3 this existence was made an assumption called there ``the class $(S)$ assumption''. The construction of such a set was achieved for the first time in \cite{HMc} to  solve the two  dimensional Kato problem. Our choice is close to this one,  rather than the one used later in \cite{AKMc}. 
 However, we need to exploit the observation made in \cite{AAH} that one can reduce the action 
 of $\gamma_{t}(x)$ to a subspace, while this is not necessary in \cite{HMc} or in \cite{AHLMcT}.  
 
 The importance of the inequality \eqref{eq:ref1}, or at least an integral version of it, was pointed out in \cite{AT}, Chapter 3, and found its roots in  \cite{S}.
 The kind of stopping-time argument providing  an inequality like \eqref{eq:ref1} leading to \eqref{eq:ref2} is developed for the first time in this context in \cite{HMc}. It is mentioned in \cite{AHLMcT} that, in retrospect, this stopping-time argument is akin to an argument of Christ \cite{Ch} devised for proving a local $T(b)$ theorem for singular integrals. The conical decomposition done in the space of constants ($\C^n$) to estimate the Carleson measure associated to $\gamma_{t}(x)$ was the main new ingredient of \cite{HLMc}. This provided a means to build a different set of test functions to solve the Kato conjecture in all dimensions  under kernel bound assumptions. The removal of such kernel bounds was achieved in \cite{AHLMcT}, thus proving the Kato conjecture for second order operators in full generality. The idea which we use of doing  the conical decomposition, not in the space of  constants, but within the 
 linear space of matrices to which $\gamma_{t}(x)$ belongs, is an important observation, made in \cite{AHMcT} for proving the Kato conjecture for higher order operators and systems. 
 
 Note that our argument is developed on $\R^n$, while the one in \cite{AHLMcT} was pushed in \cite{AKMc1} to Lipschitz domains for mixed boundary value problems. The case of Dirichlet and Neumann boundary conditions had been previously done in \cite{AT1} by a direct reduction to the $\R^n$ case.
It would be of interest to adapt Theorem \ref{th:main} to domains and to obtain  new proofs and generalisations of the results just mentioned.

\section{Validity of off-diagonal estimates}\label{offdiag}

\begin{prop}  \label{pseudoloc} 
If $k=1$ then (H6) holds for all $B$ and $D$ with (H1,2,4,5) when also $B$ denotes multiplication by a matrix-valued function $B\in L^\infty(\R^n, \mathcal L(\C^m))$.  In fact, one even has exponential decay.
\end{prop}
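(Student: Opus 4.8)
The plan is to exploit the explicit representation of the operator $t B D(I+t^2 BDBD)^{-1}$ in terms of the resolvents $(I\pm it BD)^{-1}$ and to establish exponential off-diagonal decay for these resolvents when $D$ is a \emph{first order} constant-coefficient operator and $B$ is a multiplication operator. The key algebraic identity is the commutator formula: if $\varphi$ is a real-valued Lipschitz function on $\R^n$, then $[\,D,\varphi\,]$ is the operator of multiplication by a bounded matrix-valued function depending linearly on $\nabla\varphi$, with $\|[D,\varphi]\|_{op}\lesssim \|\nabla\varphi\|_\infty$; this uses crucially that $D$ has order $k=1$ and constant coefficients, and that $B$ commutes with all multiplication operators. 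From this one gets, for $u^\varphi:=(I+itBD)^{-1}(e^{\varphi}u)$ with $\supp u\subset F$, an identity of the form $e^{-\varphi}(I+itBD)^{-1}(e^\varphi u)=(I+itBD)^{-1}u + (\text{error involving }t[D,\varphi]B)$, which one iterates.

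More precisely, first I would fix closed sets $E,F$ and set $d:=\dist(E,F)$. Choose $\varphi(x):=\alpha\,\rho(x)$ where $\rho$ is a Lipschitz function with $\rho=0$ on $F$, $\rho\ge d$ on $E$, $\|\nabla\rho\|_\infty\le 1$, and $\alpha>0$ a parameter to be optimized. The conjugated operator $B_\varphi D_\varphi := e^{-\varphi}(BD)e^{\varphi}$ equals $BD + B[D,\varphi]$ (a formal computation justified on $\dom(D)$), so $e^{-\varphi}(I+itBD)e^{\varphi}=I+itBD+itB[D,\varphi]$. Writing $T:=itB[D,\varphi]$, which is a bounded operator with $\|T\|\le C\alpha t$, and using that $(I+itBD)^{-1}$ is uniformly bounded (Proposition \ref{prop:typeomega}), a Neumann-series / perturbation argument shows that for $\alpha t\le c_0$ small enough, $e^{-\varphi}(I+itBD)^{-1}e^{\varphi}$ is bounded uniformly, hence
$$
\|(I+itBD)^{-1}u\|_{L^2(E)} = \|e^{-\varphi}e^{\varphi}(I+itBD)^{-1}u\|_{L^2(E)} \le e^{-\alpha d}\,\|e^{\varphi}(I+itBD)^{-1}u\|
$$
and the right side, using $e^{-\varphi}(I+itBD)^{-1}e^{\varphi}$ bounded together with $\supp u\subset F$ (where $\varphi=0$), is $\lesssim e^{-\alpha d}\|u\|$. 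Taking $\alpha = c_0/t$ gives $\|(I+itBD)^{-1}u\|_{L^2(E)}\lesssim e^{-c_0 d/t}\|u\|$. The same argument applies to $(I-itBD)^{-1}$, and since
$$
t BD(I+t^2BDBD)^{-1}=\tfrac1{2i}\big((I-itBD)^{-1}-(I+itBD)^{-1}\big),
$$
one obtains $\|tBD(I+t^2BDBD)^{-1}u\|_{L^2(E)}\lesssim e^{-c_0 d/t}\|u\|$, which is (H6) with exponential rather than merely polynomial decay (and the polynomial bound $\brac{d/t}^{-N}$ follows a fortiori for every $N$).

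The main obstacle is making the conjugation argument rigorous despite $D$ being unbounded: the manipulations $e^{-\varphi}(I+itBD)e^{\varphi}=I+itBD+itB[D,\varphi]$ and the ensuing perturbation series must be justified on an appropriate dense domain and then extended, and one must check that $u^\varphi\in\dom(D)$ so that the integration-by-parts/commutator identity is legitimate — this is where the hypothesis $k=1$ (so that $[D,\varphi]$ is order zero, i.e. bounded) is indispensable, whereas for $k\ge2$ the commutator $[D,\varphi]$ would involve derivatives of $u$ and the scheme breaks down (consistent with the proposition being restricted to $k=1$, and with the discussion in Section \ref{offdiag} that (H6) for $k\ge2$ requires separate, more restrictive arguments). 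A standard way to finesse the domain issue is to first prove the estimate with bounded Lipschitz $\varphi_R:=\min(\varphi,R)$, obtain bounds uniform in $R$, and let $R\to\infty$; alternatively one differentiates $s\mapsto e^{-s\varphi}(I+itBD)^{-1}e^{s\varphi}u$ in $s$ and uses a Gronwall-type estimate. Once the uniform-in-$t$ exponential bound on the resolvents is in hand, the passage to $Q_t^B$ and hence (H6) is immediate.
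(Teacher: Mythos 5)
Your argument is correct and is essentially the paper's proof: both rest on the observation that, since $k=1$ and $D$ has constant coefficients while $B$ is a multiplication operator, conjugating the resolvent $(I+itBD)^{-1}$ by an exponential weight produces only a bounded multiplicative perturbation of size $O(\alpha)$, which can be absorbed when $\alpha t$ is small, yielding decay $e^{-c\,d/t}$. The paper implements this with the bounded weight $\eta=e^{\alpha d\varphi/t}-1$ ($\varphi$ a Lipschitz cutoff supported near $E$) and the single commutator identity $[\eta \I,R_t^B]=itR_t^BB[D,\eta \I]R_t^B$ followed by direct absorption, which sidesteps the unbounded-weight and domain issues that you handle by truncation and a Neumann series; the substance is the same.
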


The proof is inspired by the one in \cite{AHLMcT}.

\begin{proof} It is enough to consider $R^B_t=(I+i t BD)^{-1}$ for $t\ne 0$, as 
$Q_t^B=\frac{-1}{2i}(R_t^B-R_{-t}^B)$.  Let  $d=\dist(E,F)$. We have already proved uniform bounds. So it is enough to prove (\ref{odn}) under the
assumption  that $|t| \le \alpha d$ for some constant $\alpha>0$ to be chosen. 
Assume  $u \in \mH$ with $\supp u\subset F$.

   Write $$\widetilde E:=\{x\in\R^n :
\dist(x,E)<\tfrac12\dist(x,F) \}$$ and let
$\varphi:\R^n\longrightarrow[0,1]$ be a Lipschitz function such
that $\supp\varphi\subset\widetilde E$, $\varphi|_E=1$ and
$$\|\nabla\varphi\|_\infty\leq 4/d.$$  Let $\eta= e^{\alpha d \varphi /t} -1$ and observe that 
$\eta= 0$ on $F$ and $\eta= e^{\alpha d /t} - 1 \ge \frac 1 2 e^{\alpha d /t}$ on $E$, thus
$$
\frac 1 2 e^{\alpha d /t}\|R_t^B
u\|_{L^2(E)} \le   \| \eta R_t^Bu\| \le \|   [\eta \I,R_t^B]u\|
$$
using that $\eta u = 0$. 
Next,
$$
       [\eta \I,R_t^B]= itR_t^B[BD, \eta \I]R_t^B= itR_t^BB [D, \eta \I] R_t ^B
$$
and $[D, \eta \I]$ is multiplication by a function $me^{\alpha d \varphi/t}$ where $m$ is supported on $\widetilde E$ with $L^\infty$ norm not exceeding $C\alpha d \|\nabla \varphi \|_\infty/t \le 4C\alpha/t$.
Thus, using the boundedness of $R_t^BB$, 
  \begin{equation*} \begin{split} 
\|\eta R_t^Bu\| 
     & \lesssim t \|[D, \eta \I]R_t^Bu\| 
     \\
     & 
     \lesssim  4C\alpha \left\|e^{\alpha d \varphi /t}R_t^Bu\right\|
     \\
     &
     \lesssim  4C\alpha (\|\eta R_t^Bu\| + \|R_t^Bu\|).
     \end{split} \end{equation*}

Hence, choosing $\alpha$ small enough (independent of $t, u$), gives
$\|\eta R_t^B u\| \lesssim \|R_t^Bu\| \lesssim \|u\|$ and  this proves the proposition.
 \qedend
\end{proof}

\begin{prop} Let $k\ge 2$ and $B$, $D$ be as in Section \ref{sec:K}. Then (H6) holds with exponential decay.
\end{prop}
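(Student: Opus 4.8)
The plan is to exploit the block structure of $B$ and $D$ from Section~\ref{sec:K} to reduce (H6) to classical weighted (Davies--Gaffney type) estimates for the $2k$-th order divergence form operator $L=(\nabla^k)^*A\nabla^k$. Since $(BD)^2$ is block diagonal with diagonal blocks $L$ and $M=A\nabla^k(\nabla^k)^*$, so is $(I+t^{2k}(BD)^2)^{-1}$, whereas $BD$ is block off-diagonal; hence $t^kBD(I+t^{2k}(BD)^2)^{-1}$ has only the two off-diagonal entries $t^kA\nabla^k(I+t^{2k}L)^{-1}$ and $t^k(\nabla^k)^*(I+t^{2k}M)^{-1}$. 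As $A\in L^\infty$, it is thus enough to prove that each of these two operator families satisfies the $L^2$ bound \eqref{odn} with exponential decay.

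For the second family I would use the intertwining identity $(I+t^{2k}L)(\nabla^k)^*=(\nabla^k)^*(I+t^{2k}M)$, which gives $(\nabla^k)^*(I+t^{2k}M)^{-1}=(I+t^{2k}L)^{-1}(\nabla^k)^*$, so that $t^k(\nabla^k)^*(I+t^{2k}M)^{-1}$ is exactly the Hilbert space adjoint of $t^k\nabla^k(I+t^{2k}L^*)^{-1}$. Since $L^*=(\nabla^k)^*A^*\nabla^k$ again satisfies the G\aa rding inequality (because $\re(A^*\xi,\xi)=\re(A\xi,\xi)$) and since off-diagonal estimates are stable under taking adjoints, everything is reduced to proving exponential off-diagonal decay for $t^k\nabla^k(I+t^{2k}L)^{-1}$ (and the same with $L$ replaced by $L^*$) for an \emph{arbitrary} $2k$-th order divergence form operator satisfying the G\aa rding form of (H5). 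Note that the expected decay rate is $\dist(E,F)/t$, not $\dist(E,F)/t^k$, consistently with the fact that $t^{2k}L$ is invariant under the dilation $x\mapsto tx$.

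The core step is a weighted energy estimate of Davies type. Fix closed sets $E,F$ with $d:=\dist(E,F)>0$ and $u$ supported in $F$; one may assume $t\le\varepsilon d$ for a small absolute constant $\varepsilon$, since in the range $t>\varepsilon d$ the bound is immediate from the uniform boundedness of $t^kBD(I+t^{2k}(BD)^2)^{-1}$ (a consequence of Proposition~\ref{prop:typeomega}) together with $\brac{d/t}^{-N}\gtrsim 1$. Choose a \emph{smooth} weight $\eta=e^{\phi}$ with $\phi=0$ on $F$, $\phi\ge\rho\,d/t$ on $E$, and $\|\nabla^m\phi\|_\infty\lesssim(\rho/t)^m$ for $0\le m\le k$, where $\rho\in(0,1)$ is a small parameter to be fixed; such a $\phi$ exists in the regime $t\le\varepsilon d$, built from a smooth regularised distance to $F$. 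Writing $v=(I+t^{2k}L)^{-1}u$ and testing the equation $v+t^{2k}Lv=u$ against $\eta^2\bar v$ (after a standard truncation to justify the manipulations), one rewrites $t^{2k}\re(Lv,\eta^2v)$ as $t^{2k}\re(A\nabla^k(\eta v),\nabla^k(\eta v))$ plus commutator terms, applies the G\aa rding inequality to $w:=\eta v\in H^k$, and absorbs each error term (which is of the type $t^{2k}(\rho/t)^{2i}\|\eta\nabla^{k-i}v\|^2$, $1\le i\le k$) into $\|w\|^2+\delta t^{2k}\|\nabla^k w\|^2$ by means of the Gagliardo--Nirenberg inequality $\|\nabla^{k-i}w\|\lesssim\|w\|^{i/k}\|\nabla^k w\|^{1-i/k}$ followed by Young's inequality, which is possible once $\rho$ is small enough (depending only on the constants in (H)). This yields $\|\eta v\|+\|t^k\eta\nabla^k v\|\lesssim\|u\|$, and since $\eta\ge e^{\rho d/t}$ on $E$ we conclude $\|t^k\nabla^k v\|_{L^2(E)}\lesssim e^{-\rho d/t}\|u\|$, which is (H6) with exponential decay for the two operator families isolated above.

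I expect the weighted energy estimate of the last paragraph to be the main obstacle: in contrast with the second order situation treated in Proposition~\ref{pseudoloc}, commuting the exponential weight through a $2k$-th order operator produces a whole hierarchy of lower-order error terms, and one has to verify that they can all be absorbed after a single use of interpolation and Young's inequality, together with the routine but slightly delicate construction of a smooth weight $\phi$ obeying $\|\nabla^m\phi\|_\infty\lesssim(\rho/t)^m$. The other point requiring care is the degeneracy of the block $M=A\nabla^k(\nabla^k)^*$, which is not elliptic on $\C^{Np}$-valued functions and to which the weighted argument cannot be applied directly; this is exactly what the intertwining identity with the elliptic operator $L$ is used to circumvent. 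Estimates of this flavour for higher order divergence form operators are in the spirit of those used in \cite{AHMcT}.
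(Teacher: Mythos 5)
Your proposal is correct and takes essentially the same route as the paper: the identical block reduction of $t^kBD(I+t^{2k}BDBD)^{-1}$ to the two entries $t^kA\nabla^k(I+t^{2k}L)^{-1}$ and $(I+t^{2k}L)^{-1}t^k(\nabla^k)^*$, followed by off-diagonal decay for the resolvent of $L$ and a duality argument for the second entry. The Davies-type weighted energy estimate you sketch (with the interpolation/Young absorption of the commutator hierarchy) is exactly the content of the reference to Davies and of the "argument in the spirit of Proposition~\ref{pseudoloc}" whose details the paper leaves to the reader.
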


\begin{proof} Observe that 
$$
 Q_t^B \begin{bmatrix}
        u_1 \\
       u_2
     \end{bmatrix} =  \begin{bmatrix}
      (I+t^{2k}L)^{-1}  (t^k (\nabla^{k})^*  u_2) \\
       t^kA \nabla^k (I+t^{2k}L)^{-1}  u_1
     \end{bmatrix}. 
     $$

 The off-diagonal bounds \eqref{odn} for $(I+t^{2k}L)^{-1} $ and $t^k \nabla^k (I+t^{2k}L)^{-1}$ have been known for some time:  see \cite{Da}  where it is done for the semi-group $e^{-t^{2k}L}$ instead of the resolvent.  However, there is an argument using  the spirit of the proof of Proposition~\ref{pseudoloc} working directly with $Q_t^B$ instead of $R_t$.  From there the off-diagonal bounds  for  $(I+t^{2k}L)^{-1}  t^k (\nabla^{k})^* A$ follow from a duality argument changing $A^*$ to $A$.  We leave details to the reader.  
 \end{proof}
\section{ Some functional consequences of the quadratic estimate }

\subsection{Proof of  Proposition \ref{cor:main}}\label{sec:ode}

We refer to \cite{ADMc} for details on functional calculus for the class of operators under consideration here. Let us just say that there is a way of defining $\sgn(BD)$ using the 
following formula
\begin{equation}\label{eq:sgnBD}
\sgn(BD)f= c\int_0^\infty (t^kBD)^3(1+t^{2k}BDBD)^{-3}f\,  \frac {dt}{t}= c\int_0^\infty (Q_t^B)^3 f \,  \frac {dt}{t}
\end{equation}
with $c^{-1}=\int_0^\infty u^{3k-1}(1 +u^{2k})^{-3} \, du.$ This comes from the fact that
the function $z\mapsto c \int_0^\infty (t^kz)^3(1+t^{2k}z^2)^{-3}\, \frac {dt}{t}$ is holomorphic 
on $\C \setminus i\R$ where it coincides with $\sgn(z)$, defined to be 1 on the right half-plane and -1 on the left half-plane, in other words, the holomorphic extension of the $\sgn$ function on the real line to 
$\C \setminus i\R$. 

By item (ii), Proposition \ref{prop:typeomega}, it is enough to define and prove boundedness of $\sgn(BD)$ on $\nul(BD)$ and $\clos{\ran(BD)}$ separately. For $f\in \nul(BD)$ then $Q_t^B f=0$ for each $t$, thus $\sgn(BD)=0$ on $\nul(BD)$. 

 It is easy to see that the integral \eqref{eq:sgnBD} converges in norm in $\mH$ for $f\in \dom(BD)\cap \ran(BD)$, because then 
$$ \| (t^kBD)^3(1+t^{2k}BDBD)^{-3}f\| \lesssim \min (t^k, t^{-k}).
$$ Since $ \dom(BD)\cap \ran(BD)$ is dense in $\clos{\ran(BD)}$, this defines $\sgn(BD)$ on the latter provided one shows $\|\sgn(BD)f \| \le c \|f\|$ for $f\in \dom(BD)\cap \ran(BD)$.

Let $f\in\dom(BD),g \in \dom(DB^*)$. Then 
$$
|((Q_t^B)^3f, g) | = |(Q_t^B (Q_t^Bf), (Q_t^B)^*g)| \lesssim \|Q_t^Bf\|\|(Q_t^B)^*g\|
$$
and applying the Cauchy-Schwarz inequality
$$
|(\sgn(BD) f, g)| \lesssim \left( \qe{Q_t^Bf}\right)^{1/2} \left( \qe{(Q_t^B)^*g}\right)^{1/2}.
$$
The first factor is directly controlled by $c\|f\|$ by assumption. For the second factor,  write
$$(Q_t^B)^*g= (I + t^{2k}DB^*DB^*)^{-1} t^kDB^*g= t^kDB^*(I + t^{2k}DB^*DB^*)^{-1} g.$$
We shall show in a moment that quadratic estimates for operators $DB^*$ are a consequence of the assumed quadratic estimates for $B^*D$. We conclude that  $\sgn(BD)$ is bounded as desired.
       
   We remark that $ \sgn(BD)\sgn(BD)= I$ on $\clos{\ran(BD)}$ from the properties of functional calculus. This gives the invertibility of $\sgn(BD)$ on $\clos{\ran(BD)}$, and the proposition is proved. 
   
\subsection{Operators of type $DB$}

\begin{prop}   \label{prop:typeomega2}
Under (H2,4,5), we have
\begin{itemize}
\item[{\rm (i)}]
The operator $DB$ is $\omega$-bisectorial, i.e. $\sigma(DB)\subset S_\omega$ and there are resolvent bounds 
$\|(\lambda I - DB)^{-1}\| \lesssim 1/ \dist(\lambda, S_\omega)$ when $\lambda\notin S_\omega$.
\item[{\rm (ii)}]
The operator $DB$ has range $\ran(DB)=\ran(D)$ and null space $\nul(DB)$ such that topologically (but in general non-orthogonally) one has
$$
\mH = \clos{\ran(DB)} \oplus \nul(DB).
$$
\item[{\rm (iii)}] 
The restriction of $DB$ to $\clos{\ran(DB)}$ 
is a closed and injective operator with dense range in
$\clos{\ran(DB)}$, with estimates on spectrum and resolvents as in (i).
\item[{\rm (iv)}]
If $BD$ satisfies the quadratic estimate (\ref{eq:sfBD}), for example if (H1-6) are all satisfied, then for all $g\in \mH$,
\begin{equation}\left(\qe{t^kDB(I + t^{2k}DBDB)^{-1} g}\right)^{1/2}\lesssim \|g\|.
\label{eq:sfBD*}\end{equation} 
\end{itemize}
\end{prop}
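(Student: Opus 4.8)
Parts (i)--(iii) are the exact analogues for $DB$ of Proposition \ref{prop:typeomega} and would be established in exactly the same way: one first proves $\ran(DB)=\ran(D)$ and the topological (non-orthogonal) splitting $\mH=\clos{\ran(DB)}\oplus\nul(DB)$, using \eqref{eq:splitting} and (H5) --- the relevant fact being that the composition of $B$ with the orthogonal projection onto $\clos{\ran(D)}$ restricts to an invertible operator on $\clos{\ran(D)}$ --- and then adapts the argument of \cite{ADMc} for (iii), from which (i) follows. As in Proposition \ref{prop:typeomega}, only self-adjointness of $D$ and boundedness and accretivity of $B$ on $\ran(D)$ are used; in particular $R_s^{DB}:=(I+isDB)^{-1}$, and hence $Q_t^{DB}:=t^kDB(I+t^{2k}DBDB)^{-1}=\tfrac1{2i}\big(R^{DB}_{-t^k}-R^{DB}_{t^k}\big)$, are uniformly bounded.

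The content is (iv). One is tempted to write $DB=B^{-1}(BD)B$ and transfer the quadratic estimate by similarity, but $B$ need not be invertible on $\mH$. Instead I would intertwine at the level of resolvents. Starting from the algebraic identity $B(I+t^{2k}DBDB)=(I+t^{2k}BDBD)B$ --- valid on $\dom((DB)^2)$, since $x\in\dom((DB)^2)$ forces $Bx\in\dom((BD)^2)$ --- together with the invertibility of $I+t^{2k}(DB)^2$ and $I+t^{2k}(BD)^2$, one gets $B(I+t^{2k}DBDB)^{-1}=(I+t^{2k}BDBD)^{-1}B$ as bounded operators, and therefore
$$
Q_t^{DB}=t^k\,D\,(I+t^{2k}BDBD)^{-1}B .
$$
(Equivalently one checks directly that $R^{DB}_s=I-isD R^B_s B$ and uses $\tfrac12(R^B_{-t^k}+R^B_{t^k})=(I+t^{2k}BDBD)^{-1}$.) The key point is then that, although $D$ is unbounded, on its range it is dominated by $BD$: for every $u\in\dom(D)$ one has $Du\in\ran(D)\subset\clos{\ran(D)}$, so (H5) and Cauchy--Schwarz give $\delta\|Du\|^2\le\re(BDu,Du)\le\|BDu\|\,\|Du\|$, i.e.\ $\|Du\|\le\delta^{-1}\|BDu\|$.

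Applying this with $u=v_t:=(I+t^{2k}BDBD)^{-1}Bg\in\dom((BD)^2)\subset\dom(D)$ yields, for every $g\in\mH$ and every $t>0$,
$$
\|Q_t^{DB}g\|=t^k\|Dv_t\|\le\delta^{-1}t^k\|BDv_t\|=\delta^{-1}\big\|t^kBD(I+t^{2k}BDBD)^{-1}(Bg)\big\| .
$$
Integrating against $\tfrac{dt}{t}$ and invoking the quadratic estimate \eqref{eq:sfBD} for $BD$ --- which holds by assumption, e.g.\ by Theorem \ref{th:main} if (H1--6) all hold --- applied to the single vector $Bg\in\mH$, I obtain
$$
\qe{Q_t^{DB}g}\ \le\ \delta^{-2}\qe{t^kBD(I+t^{2k}BDBD)^{-1}Bg}\ \lesssim\ \|Bg\|^2\ \lesssim\ \|g\|^2 ,
$$
which is \eqref{eq:sfBD*}. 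The only genuine work is the domain bookkeeping behind the intertwining formula for $Q_t^{DB}$; the analytic substance is entirely imported from the case of $BD$, and the non-invertibility of $B$ is neutralised once and for all by the bound $\|Du\|\le\delta^{-1}\|BDu\|$ on $\dom(D)$.
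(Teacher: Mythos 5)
Your argument is correct and is essentially the paper's: both transfer the quadratic estimate from $BD$ to $DB$ through the action of $B$, using that (H5) makes $B$ bounded below on $\ran(D)$ --- your bound $\|Du\|\le\delta^{-1}\|BDu\|$ is precisely the invertibility of the restriction $B'=B:\clos{\ran(D)}\to\clos{\ran(BD)}$ that the paper uses to write $DB=(B')^{-1}(BD)B'$ and carry results over from $\clos{\ran(BD)}$ to $\clos{\ran(DB)}$. The only cosmetic difference is that you unpack this similarity as a resolvent intertwining valid on all of $\mH$, so for part (iv) you need not separately invoke the splitting from (ii) and the vanishing of $Q_t^{DB}$ on $\nul(DB)$.
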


If $B$ is strictly accretive on all $\mH$, then $DB= B^{-1}(BD)B$, so 
$DB$ and $BD$ are similar operators.
In this case, bisectoriality, resolvent bounds, quadratic estimates and boundedness of 
functional calculus carries over from immediately $BD$ to $DB$.

However we are only assuming that $B$ is strictly accretive on $\ran(D)$ as in (H5).
Denote by $B'$ the restricted operator
$B'=B:\clos{\ran(D)}\rightarrow B\clos{\ran(D)}= \clos{\ran(BD)}$.
By (H4,5) this is an isomorphism, and we have $DB=(B')^{-1}(BD)B'$ on $\clos{\ran(DB)}$. Thus we can transfer results about $BD$ on $\clos{\ran(BD)}$ to results about $DB$ on $\clos{\ran(DB)}
$.

To extend the action of $DB$ to all of $\mH$,   we use (ii).  To check (ii), 
note that
$DB$ and $B^*D$ are adjoint operators
so that $\clos{\ran(DB)}=\nul(B^*D)^\perp$ and $\nul(DB)= \ran(B^*D)^\perp$
and the splitting follows from Proposition~\ref{prop:typeomega}(ii) on taking orthogonal complements.

\section{Spectral decomposition and dependence on $B$}
For 
 this section,
we specialise to the case of first order differential operators $D$, i.e.~$k=1$, and assume that $B$ is a multiplication operator.

We continue to treat $DB$, though the following results are readily adapted to $BD$. 

\begin{prop}  \label{spectraldecomp} Suppose the hypotheses (H1-5) are satisfied with $k=1$, and that $B$ is multiplication by a function $B\in L^\infty(\R^n,\mL(\C^m))$. Then the following hold.  
\begin{itemize}
\item[(i)] The operator $\sgn(DB)$ is bounded on $\mH$.
\item[(ii)]   The operator $DB$ has a spectral decomposition $\clos{\ran(D)}=\clos{\ran(DB)}=\mH_{DB+}\oplus\mH_{DB-}$ where $\mH_{DB\pm}=\{v\in\clos{\ran(D)} : \sgn(DB)v=\pm v\}$.
 
\item[(iii)] The projections of $\clos{\ran(D)}$ onto $\mH_{DB\pm}$ are $E^B_\pm=1/2(I\pm \sgn(DB))$.
\item[(iv)] The restriction of $DB$ to $\mH_{DB+}$ is $\omega$-sectorial and thus generates a bounded analytic semigroup $e^{-tDB}$ in $\mH_{DB+}$ which satisfies $\lim_{t\to\infty}e^{-tDB}v\to0$ and $\lim_{t\to0}e^{-tDB}v\to v$ for all $v\in \mH_{DB+}$.
\end{itemize}
\end{prop}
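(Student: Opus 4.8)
The plan is to deduce everything from the operator-theoretic machinery already assembled, principally Proposition \ref{prop:typeomega2} together with Proposition \ref{cor:main} and Corollary \ref{prop:firstorder}. For part (i), I first observe that under (H1--5) with $k=1$ and $B$ a multiplication operator, both $B$ and $B^*$ satisfy (H4,5), and by Proposition \ref{pseudoloc} the off-diagonal bound (H6) holds for $BD$ with $B$ replaced by either $B$ or $B^*$; hence Theorem \ref{th:main} applies to give the quadratic estimate \eqref{eq:sfBD} for $BD$ and the analogous one for $B^*D$. Then Proposition \ref{prop:typeomega2}(iv) (applied with $B$, and also with $B^*$ in place of $B$, which is legitimate since its hypotheses are symmetric in this respect) yields the quadratic estimates \eqref{eq:sfBD*} for $DB$ and for $DB^*$. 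Now I invoke Proposition \ref{cor:main} with the roles of the operators taken to be $D$ and $B$ in the product $DB$: since $DB$ is of the form treated there (one checks $D$ self-adjoint, $B$ bounded and accretive on $\ran(D)$, exactly (H2,4,5)), the boundedness of $\sgn(DB)$ on $\mH$ follows, together with its invertibility on $\clos{\ran(DB)}$. Strictly, Proposition \ref{cor:main} is stated for operators $BD$; I would note that by the similarity $DB=(B')^{-1}(BD)B'$ on $\clos{\ran(DB)}$ established before Proposition \ref{prop:typeomega2}, or by rerunning the same functional-calculus argument with $DB$ in place of $BD$, the conclusion transfers verbatim.

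For (ii) and (iii), I use the functional calculus for the $\omega$-bisectorial operator $DB$ (Proposition \ref{prop:typeomega2}(i),(iii)): on $\clos{\ran(DB)}=\clos{\ran(D)}$ the operator $DB$ is injective with dense range, and $\sgn^2=1$ on the spectrum $S_\omega\setminus\{0\}$, so $\sgn(DB)^2=I$ on $\clos{\ran(DB)}$ by the multiplicativity of the bounded holomorphic functional calculus (this multiplicativity is exactly what Proposition \ref{cor:main} and the reference \cite{ADMc} provide once the quadratic estimates are known). Consequently $E^B_\pm:=\tfrac12(I\pm\sgn(DB))$ are complementary bounded projections on $\clos{\ran(D)}$, their ranges $\mH_{DB\pm}=\{v\in\clos{\ran(D)}:\sgn(DB)v=\pm v\}$ split $\clos{\ran(D)}$ topologically, and the equality $\ran(D)=\ran(DB)$ from Proposition \ref{prop:typeomega2}(ii) identifies the ambient space. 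This is a routine consequence of $\sgn$ being an involution in the functional calculus; no new estimate is needed.

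For (iv), since $\sgn(z)=1$ on the right sector and $DB$ restricted to $\mH_{DB+}$ has spectrum in $S_\omega\cap\{\re z\ge0\}$, this restriction is $\omega$-sectorial with the inherited resolvent bounds from Proposition \ref{prop:typeomega2}(i). An $\omega$-sectorial operator with $\omega<\pi/2$ and a bounded holomorphic functional calculus generates a bounded analytic semigroup $e^{-tDB}$ on $\mH_{DB+}$; the limits $e^{-tDB}v\to v$ as $t\to0$ and $e^{-tDB}v\to0$ as $t\to\infty$ follow from the functional calculus applied to the uniformly bounded family $e^{-tz}$, using that $DB$ is injective with dense range on $\mH_{DB+}$ (dense range gives the $t\to\infty$ decay on a dense set, uniform boundedness extends it; injectivity is not even needed for $t\to0$). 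I expect the only genuinely delicate point to be the careful bookkeeping in (i): making sure the quadratic estimate is available for all four operators $BD$, $B^*D$, $DB$, $DB^*$ — this requires chaining Theorem \ref{th:main} (via Proposition \ref{pseudoloc} for (H6)) with the transfer result Proposition \ref{prop:typeomega2}(iv), applied once with $B$ and once with $B^*$ — after which (ii)--(iv) are formal consequences of holomorphic functional calculus.
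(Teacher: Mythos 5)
Your proposal is correct and follows essentially the same route as the paper: the paper's (very terse) proof likewise obtains the quadratic estimates \eqref{eq:sfBD} and \eqref{eq:sfBD*} (via Theorem \ref{th:main}, Proposition \ref{pseudoloc} and Proposition \ref{prop:typeomega2}(iv)) and then runs the argument of Proposition \ref{cor:main}/Corollary \ref{prop:firstorder} for $DB$, with (ii)--(iv) following as formal consequences of the holomorphic functional calculus. Your elaboration of the bookkeeping for the four operators $BD$, $B^*D$, $DB$, $DB^*$ and of the standard semigroup facts in (iv) is accurate and fills in details the paper leaves implicit.
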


Indeed $V:=e^{-t DB}v$ is the unique function in $C^1(\R_+,\mH)$
  which satisfies \linebreak
$\frac\partial{\partial t}V(t)+DBV(t)=0$ for $t>0$, as well as the limiting conditions just mentioned.

 Part (i) is proved in the same way as Proposition \ref{prop:firstorder} once we have the quadratic estimates (\ref{eq:sfBD}) and (\ref{eq:sfBD*}). The other parts follow as a consequence.

We remark that, by the preceding two propositions, there is a spectral decomposition 
$$
\mH = \mH_{DB+}\oplus\mH_{DB-} \oplus \nul(DB)
$$
corresponding to the right and left sectors of $S_\omega$ and $\{0\}$.

It is an important consequence of proving bounds for a general class of matrices $B$, that all of the corresponding bounded operators such as $\sgn(DB)$ depend analytically on $B$. To prove this, start by showing analytic dependence for the resolvents, and then prove analyticity for more general operators by using the fact that uniform limits of analytic functions are analytic.  
See \cite{AKMc, AAM} for further details. We need this fact for the projections $E^B_\pm$ defined above.

\begin{prop} Suppose that $z\mapsto B_z$ is an analytic function from $z$ in an open set $\Omega\subset\C$ to functions $B_z\in L^\infty(\R^n,\mL(\C^m))$, and that the operators $D$ and multiplication by $B_z$ in $\mH$ satisfy (H1-5) with $k=1$ under uniform bounds. Then the projections $E^{B_z}_\pm:\clos{\ran(D)}\to\mH_{D{B_z}\pm}\subset\clos{\ran(D)}$ depend analytically on $B$. \label{analytic}
\end{prop}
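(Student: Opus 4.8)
The plan is to reduce the analyticity of the projections $E^{B_z}_\pm$ to the analyticity of the resolvents $(\lambda I - DB_z)^{-1}$, and then to propagate analyticity through the various operator-theoretic constructions that produce $\sgn(DB_z)$. First I would observe that $z\mapsto B_z\in L^\infty(\R^n,\mL(\C^m))$ being analytic, together with the uniform validity of (H1--5), implies that for each fixed $\lambda\notin S_\omega$ the map $z\mapsto (\lambda I - DB_z)^{-1}$ is an analytic $\mL(\mH)$-valued function on $\Omega$: one writes the resolvent identity $(\lambda I - DB_{z})^{-1} - (\lambda I - DB_{z_0})^{-1} = (\lambda I - DB_z)^{-1} D(B_z - B_{z_0})(\lambda I - DB_{z_0})^{-1}$, uses the uniform resolvent bounds from Proposition~\ref{prop:typeomega2}(i) and the fact that $D(\lambda I - DB_{z_0})^{-1}$ is bounded on $\clos{\ran(D)}$, and checks the complex differentiability directly by letting $z\to z_0$; the difference quotient converges in operator norm to $(\lambda I - DB_{z})^{-1} D B_z'(\lambda I - DB_z)^{-1}$ where $B_z'$ is the $L^\infty$-derivative of $B_z$.

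Next I would show that $\sgn(DB_z)$ is an analytic $\mL(\mH)$-valued function of $z$. Here I would use the integral representation analogous to \eqref{eq:sgnBD}, namely $\sgn(DB_z)f = c\int_0^\infty (Q_t^{B_z,DB})^3 f\,\frac{dt}{t}$ on $\clos{\ran(DB_z)}$ (and $0$ on $\nul(DB_z)$), where $Q_t^{B_z,DB}=t DB_z(I+t^2DB_zDB_z)^{-1}$ is built from the resolvents $R^{B_z}_{\pm t}$; each $Q_t^{B_z,DB}$ is analytic in $z$ by the previous paragraph, the integrand is analytic, and the quadratic estimates \eqref{eq:sfBD} and \eqref{eq:sfBD*} — which hold uniformly in $z$ because the constant in Theorem~\ref{th:main} depends only on the implicit constants in (H) — give a uniform-in-$z$ bound allowing one to pass analyticity through the integral (uniform limits of analytic functions are analytic, applied to the truncated integrals $\int_{1/R}^R$, which converge uniformly on compact subsets of $\Omega$ thanks to the bound $\|(Q_t^{B_z,DB})^3f\|\lesssim\min(t,t^{-1})\|f\|$ valid on a dense set). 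Strictly, since the decomposition $\mH=\clos{\ran(DB_z)}\oplus\nul(DB_z)$ itself varies with $z$, it is cleaner to work with the bounded holomorphic functional calculus directly: $\sgn(DB_z)$ agrees with $\psi(DB_z)$ for a fixed bounded holomorphic $\psi$ on a double sector (equal to $\pm1$ on the two half-planes), and the functional calculus map $\psi\mapsto\psi(DB_z)$ is constructed from the resolvents via a Cauchy-type integral, so the same ``analytic integrand plus uniform bound'' argument applies without having to track the splitting.

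Finally, since $E^{B_z}_\pm = \tfrac12(I\pm\sgn(DB_z))$ as operators on $\clos{\ran(D)}=\clos{\ran(DB_z)}$ (this identity is part of Proposition~\ref{spectraldecomp}(iii), and note $\clos{\ran(D)}$ is fixed, independent of $z$, by Proposition~\ref{prop:typeomega2}(ii)), the analyticity of $z\mapsto E^{B_z}_\pm\in\mL(\clos{\ran(D)})$ follows immediately from that of $z\mapsto\sgn(DB_z)$. I expect the main obstacle to be the bookkeeping around the $z$-dependent splittings: one must be careful that ``analytic'' is asserted for operators on the fixed space $\clos{\ran(D)}$, and that the uniformity of all quadratic-estimate and resolvent constants in $z$ is genuinely available from the hypotheses (it is, since (H1--5) are assumed with uniform bounds and Theorem~\ref{th:main}'s constant depends only on those). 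Once uniformity is in hand, everything else is the standard ``uniform limit of analytic functions'' mechanism, and I would simply refer to \cite{AKMc, AAM} for the routine details rather than reproduce them.
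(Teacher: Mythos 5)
Your proposal is correct and follows essentially the same route the paper indicates: establish analyticity of the resolvents $z\mapsto(\lambda I-DB_z)^{-1}$ via the resolvent identity, then pass to $\sgn(DB_z)$ (and hence to $E^{B_z}_\pm=\tfrac12(I\pm\sgn(DB_z))$ on the fixed space $\clos{\ran(D)}$) using uniform bounds and the fact that uniform limits of analytic functions are analytic, with the remaining details deferred to \cite{AKMc, AAM}. The paper itself only sketches this argument in the paragraph preceding the proposition, so your write-up is a faithful (and somewhat more detailed) version of the intended proof.
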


\section{Elliptic systems with square integrable boundary data}\label{ESSIBD}

Let us illustrate the power of our estimates by showing that elliptic systems on $\R^{1+n}_+=\{(t,x);t>0\}$ of the form
\begin{equation}\divv_{t,x} A({x})\nabla_{t,x} F(t,x)=0\label{pde}
\end{equation}
are well-posed under  $L^2$ Neumann data on $\R^n$, when the coefficient matrix $A$ is self-adjoint, strictly accretive, and has coefficients which are bounded measurable functions of $x\in\R^n$. The functions $F$ map $\R^{1+n}_+$ to $\C^N$.  One can also handle  $L^2$ Dirichlet and $\dot H^1$ Dirichlet data, thus  generalising  results   concerning the case $N=1$ and real symmetric coefficients \cite{JK1,DJK,KP}.  See \cite{AAM} for a more extensive 
theory and for the historical background. 

\subsection{Results}

On writing $$U(t,x)=\left[\begin{array}{c}U_0(t,x)\\{\bf U}(t,x)\end{array}\right]=\left[\begin{array}{c}\tfrac{\partial}{\partial t}F(t,x)\\ \nabla_xF(t,x)\end{array}\right]=\nabla_{t,x} F(t,x)$$ 
where $U$ takes values in $\C^{N}\oplus\C^{nN}$, the second order equation can be rewritten as a first order system
\begin{align}
\divv_{t,x} A({x})U(t,x)&=0\label{system1}\qquad\text{and}\\
\curl_{t,x} U(t,x)&=0\label{system2}
\end{align}
for $(t,{x}) \in \R^{n+1}_+$. We assume throughout this section that the
 coefficient matrix $A\in L^\infty(\R^n, \mL(\C^N\oplus\C^{nN}))$ is strictly accretive in the sense that $$\re\left(A\left[\begin{array}{c}f \\ \nabla_xg \end{array}\right],
\left[\begin{array}{c}f \\ \nabla_xg \end{array}\right]\right)\geq\kappa(\|f\|^2+\|\nabla_x g\|^2)
$$
for some $\kappa>0$ and all $f\in L^2(\R^n,\C^N)$, $g\in \dot H^1(R^n,\C^N)$. For the moment we do not assume self-adjointness of $A$.

The {\it  Neumann problem} for (\ref{pde}) is {\it well-posed in the} $L^\infty(L^2)$ {\it sense} means that, given $w \in L^2(\R^n, \C^N)$, there exists a unique function $U\in C^1(\R_+,L^2(\R^n,\C^{N}\oplus\C^{nN}))$ which satisfies (\ref{system1}) and (\ref{system2}) on $\R^{1+n}_+$, as well as $\lim_{t\to\infty}U(t,.)=0$ and $\lim_{t\to 0}U(t,.)=u$ in the $L^2$ sense, where 
\begin{equation}(Au)_0=\sum_{j=0}^n A_{0,j} u_j =w\qquad\text{on} \ \R^n\ .\label{neu}\end{equation}

Our aim is to prove the following result.

\begin{prop} Assume, in addition to the above conditions, that $A$ is self-adjoint. Then the Neumann problem for (\ref{pde}) is  well-posed in the $L^\infty(L^2)$ sense.\label{prop:nsa}
\end{prop}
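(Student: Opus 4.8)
The plan is to reduce the Neumann problem for the second-order system \eqref{pde} to a first-order boundary value problem governed by an operator of the form $DB$, and then invoke the spectral decomposition and semigroup theory of Proposition~\ref{spectraldecomp}. First I would set up the first-order formalism: writing $U=\nabla_{t,x}F$ as in \eqref{system1}--\eqref{system2}, I would observe that the curl-free condition \eqref{system2} together with the equation \eqref{system1} can be encoded as $\partial_t U + DBU = 0$ for an appropriate self-adjoint first-order operator $D$ on $\mH=L^2(\R^n,\C^N\oplus\C^{nN})$ acting in the $x$-variables, with $B$ the multiplication operator given by (a rearrangement of) the coefficient matrix $A$. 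Concretely, $D$ is built from $\divv_x$ and $\nabla_x$ so that $\clos{\ran(D)}$ consists precisely of the vectors $\bigl[\begin{smallmatrix}f\\\nabla_x g\end{smallmatrix}\bigr]$, i.e.\ the constraint manifold cut out by \eqref{system2}; one checks (H1,2,4,5) with $k=1$, the accretivity (H5) being exactly the G\aa rding hypothesis on $A$ restricted to $\ran(D)$, and (H6) follows from Proposition~\ref{pseudoloc}. Self-adjointness of $A$ is what guarantees $B$ (equivalently $B^*$) is accretive on $\ran(D)$, so Corollary~\ref{prop:firstorder} and hence Proposition~\ref{spectraldecomp} apply.

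Next I would identify solutions. By Proposition~\ref{spectraldecomp}(iv) and the remark following it, the bounded solutions $U(t,\cdot)$ of $\partial_t U + DBU=0$ with $U(t,\cdot)\to 0$ as $t\to\infty$ are exactly those of the form $U(t,\cdot)=e^{-tDB}h$ with $h\in\mH_{DB+}$, the positive spectral subspace of $DB$ inside $\clos{\ran(D)}$; such $U$ automatically lies in $C^1(\R_+,\mH)$, satisfies the constraint \eqref{system2} (since $\mH_{DB+}\subset\clos{\ran(D)}$), and $U(t,\cdot)\to h$ in $L^2$ as $t\to 0$. So well-posedness of the Neumann problem amounts to showing that the trace map $h\mapsto (Bh)_0$ — the zeroth component of $Bh$, which by \eqref{neu} is the Neumann datum $w$ — is a bounded invertible map from $\mH_{DB+}$ onto $L^2(\R^n,\C^N)$. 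Boundedness is immediate from boundedness of $B$ and of the projection $E^B_+$ from Proposition~\ref{spectraldecomp}(iii).

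The heart of the matter, and the step I expect to be the main obstacle, is the invertibility of this trace map $N_+\colon \mH_{DB+}\to L^2(\R^n,\C^N)$, $h\mapsto (Bh)_0$. The strategy is a continuity (method of continuity / homotopy) argument in the coefficient matrix $A$. When $A=I$ the operator $DB=D$ splits $\clos{\ran(D)}$ into its Hardy-type subspaces and the trace map is easily seen to be an isomorphism by explicit Fourier computation. For general self-adjoint strictly accretive $A$ one connects it to $I$ through the segment $A_\tau = (1-\tau)I + \tau A$, $\tau\in[0,1]$, which stays self-adjoint and strictly accretive with uniform bounds; Proposition~\ref{analytic} then gives that the projections $E^{A_\tau}_+$, hence the spaces $\mH_{DB_\tau+}$ and the trace maps $N_+^{A_\tau}$, vary continuously (indeed analytically) in $\tau$. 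A standard open-and-closed argument reduces invertibility for all $\tau$ to an a~priori lower bound $\|N_+^{A_\tau}h\|\gtrsim\|h\|$ uniform in $\tau$; this lower bound is where self-adjointness of $A$ is used essentially, via an integration-by-parts (Rellich-type) identity showing that for $h\in\mH_{DB+}$ the full $L^2$ norm of $h$ is controlled by the normal component $(Bh)_0$ of the conormal derivative. One also needs a matching upper bound / surjectivity, obtained from the lower bound for the adjoint problem (which, because $A$ is self-adjoint, is of the same type). Finally I would verify uniqueness: any solution $U$ of \eqref{system1}--\eqref{system2} with the stated limits is, by the $C^1$ ODE theory of Proposition~\ref{spectraldecomp}(iv) applied on $\clos{\ran(D)}$, of the form $e^{-tDB}h$ with $h\in\mH_{DB+}$, so two solutions with the same Neumann datum have $N_+(h_1-h_2)=0$, whence $h_1=h_2$ by injectivity of $N_+$. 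This completes the proof modulo the Rellich estimate, whose verification for merely bounded measurable self-adjoint $A$ is the genuinely delicate point and is carried out in detail in \cite{AAM}.
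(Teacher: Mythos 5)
Your overall strategy coincides with the paper's: reduce to a first-order evolution driven by an operator of type $DB$, represent decaying solutions as $e^{-tDB}h$ with $h$ in the positive spectral subspace, and prove invertibility of the boundary trace by combining a Rellich-type a priori bound with the method of continuity from the identity coefficients via Proposition \ref{analytic}. However, there are two concrete gaps. First, the reduction is not as direct as you claim: the system \eqref{system1}--\eqref{system2} cannot be written as $\partial_t U+DBU=0$ in the variable $U$ with $B$ a multiplication operator, because the first line of \eqref{system1} is an equation for $\partial_t(aU_0+b{\bf U})$, not for $\partial_t U_0$. One must pass to the conormal gradient $V=\left[\begin{smallmatrix}a&b\\0&1\end{smallmatrix}\right]U$ and to the transformed matrix $\hat A=\left[\begin{smallmatrix}1&0\\c&d\end{smallmatrix}\right]\left[\begin{smallmatrix}a&b\\0&1\end{smallmatrix}\right]^{-1}$, after which the system reads $\partial_t V+D\hat AV=0$ with $\curl_x{\bf V}=0$ and the Neumann datum is simply $v_0$ (not $(Bv)_0$). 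One must then verify that $\hat A$ is still bounded and strictly accretive on $\clos{\ran(D)}$ (Proposition \ref{prop:hatA}); this accretivity comes from the G\aa rding hypothesis on $A$, not from self-adjointness, so your assertion that ``self-adjointness of $A$ is what guarantees $B$ is accretive'' is a misattribution. Indeed $\hat A$ is \emph{not} self-adjoint when $A$ is.

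Second, and more importantly, you defer the Rellich estimate to the literature, yet it is the only place where self-adjointness enters and it is the crux of the proof; without it your argument is incomplete. It has a short self-contained derivation: self-adjointness of $A$ is equivalent to $(\hat A)^*K=K\hat A$ with $K=\left[\begin{smallmatrix}1&0\\0&-1\end{smallmatrix}\right]$, and since $KD+DK=0$, for $v\in\mH_{D\hat A+}$ and $V=e^{-tD\hat A}v$ one gets $(K\hat Av,v)=-\int_0^\infty\tfrac{d}{dt}(K\hat AV,V)\,dt=0$, i.e. $(\alpha v_0,v_0)+2\re(\beta{\bf v},v_0)-(\delta{\bf v},{\bf v})=0$, and strict accretivity of $\delta$ on $\clos{\ran(\nabla)}$ yields $\|{\bf v}\|\lesssim\|v_0\|$, hence $\|v\|\approx\|v_0\|$. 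A minor further point: your detour through ``the adjoint problem'' for surjectivity is unnecessary (and would require identifying the adjoint of the restricted trace map, which is not immediate); the invariance of the semi-Fredholm index along the segment of coefficients, together with the uniform lower bound and the explicit isomorphism for the Laplacian, already gives surjectivity.
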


In fact the solutions satisfy quadratic estimates 
and have non-tangential maximal function estimates.  See  \cite{AAM} for this, and for a treatment of more general conditions on $A$ and other boundary conditions.

\subsection{A related equation} 
In what follows, we write 
\begin{equation} A=\left[\begin{array}{cc}
a & b \\ c & d \end{array}\right], \end{equation}
where $a,b,c,d$ are $L^\infty$ functions taking values $a({ x})\in \mL(\C^N), b(x)\in\mL(\C^{nN},\C^{N}), c(x)\in\mL(\C^{N},\C^{nN}), d(x)\in\mL(\C^{nN})$ for a.a. $x\in\R^n$. Also write
\begin{equation}\ U=\left[\begin{array}{c}
U_0 \\ {\bf U} \end{array}\right]\ ;\ V=\left[\begin{array}{c}
V_0 \\ {\bf V} \end{array}\right]=\left[\begin{array}{cc}
a&b \\ 0&1 \end{array}\right]U
\end{equation}
where $U_0,V_0:\R^{1+n}_+\to \C^N$ and ${\bf U},{\bf V}:\R^{1+n}_+\to \C^{nN}$
and set
\begin{equation} D=\left[\begin{array}{cc}
0&\divv \\ -\nabla &0 \end{array}\right]
\end{equation}
where $\nabla=\nabla_x$ and $\divv=-\nabla^*$. This operator $D$ satisfies (H1,2,3).

Note that $\clos{\ran(D)}= L^2(\R^n,\C^N)\oplus\clos{\ran(\nabla)}$. So the coercivity condition on $A$ is really coercivity on $\clos{\ran(D)}$, i.e.
$$\re(ADu,Du)\geq\kappa\|Du\|^2$$ for all $u\in\dom(D)$. 

 We note that this inequality implies the pointwise accretivity $\re(a(x)\xi,\xi) \geq  \kappa |\xi|^2$
for all $\xi \in \C^N$ so we may define
$$  \hat A=\left[\begin{array}{cc}
1 & 0 \\ c & d \end{array}\right]\left[\begin{array}{cc}
a&b \\ 0&1 \end{array}\right]^{-1}.
$$

In the next subsection we show that $\hat A$ is also bounded and strictly accretive on $\clos{\ran(D)}$. 

\begin{prop} A function $U\in C^1(\R_+,L^2(\R^n,\C^{N}\oplus\C^{nN}))$ satisfies (\ref{system1}) and (\ref{system2}) on $\R^{1+n}_+$ if and only if the function $V\in C^1(\R_+,L^2(\R^n,\C^{N}\oplus\C^{nN}))$ satisfies 
\begin{align}
\frac\partial{\partial t}V+D{\hat A}V&=0\label{system3}\qquad\text{and}\\
  \curl_x {\bf V}(t, .)&=0\label{system4}
\end{align}
 on $\R^{1+n}_+$.
\end{prop}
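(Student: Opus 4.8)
The plan is to turn the equivalence into an algebraic rearrangement of the two blocks (of sizes $N$ and $nN$) of each vector field, together with the standard distributional characterisation of curl-free fields on $\R^{1+n}_+$; all four equations are read in the sense of distributions on $\R^{1+n}_+$, so one need not worry separately about $\dom(D)$.

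First I would record the block identities linking $U$, $V=\left[\begin{array}{cc} a & b \\ 0 & 1\end{array}\right]U$ and $\hat AV$. The coercivity of $A$ on $\clos{\ran(D)}$ forces $\re(a(x)\xi,\xi)\geq\kappa|\xi|^2$ for all $\xi\in\C^N$, as noted above, so $\left[\begin{array}{cc} a & b \\ 0 & 1\end{array}\right]$ is pointwise invertible with bounded inverse; hence $V\in C^1(\R_+,L^2(\R^n,\C^N\oplus\C^{nN}))$ is well defined and $U=\left[\begin{array}{cc} a & b \\ 0 & 1\end{array}\right]^{-1}V$. From the definition of $\hat A$,
$$
\hat AV=\left[\begin{array}{cc} 1 & 0 \\ c & d\end{array}\right]\left[\begin{array}{cc} a & b \\ 0 & 1\end{array}\right]^{-1}V=\left[\begin{array}{cc} 1 & 0 \\ c & d\end{array}\right]U=\left[\begin{array}{c} U_0 \\ cU_0+d{\bf U}\end{array}\right],
$$
while $V_0=aU_0+b{\bf U}$ and ${\bf V}={\bf U}$. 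In particular the two blocks of $AU$ are $V_0$ and $cU_0+d{\bf U}$, the latter being also the lower block of $\hat AV$.

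Next I would compute $\partial_tV+D\hat AV$. Since $D=\left[\begin{array}{cc} 0 & \divv \\ -\nabla & 0\end{array}\right]$ and $\divv_{t,x}=\partial_t\oplus\divv_x$, the identities above give
$$
\partial_tV+D\hat AV=\left[\begin{array}{c} \partial_t(aU_0+b{\bf U})+\divv_x(cU_0+d{\bf U}) \\ \partial_t{\bf U}-\nabla_xU_0\end{array}\right]=\left[\begin{array}{c} \divv_{t,x}(AU) \\ \partial_t{\bf U}-\nabla_xU_0\end{array}\right].
$$
Hence \eqref{system3} holds if and only if both \eqref{system1} holds and $\partial_t{\bf U}=\nabla_xU_0$. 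Also, since ${\bf V}={\bf U}$, \eqref{system4} is exactly $\curl_x{\bf U}=0$. Finally I would invoke the elementary fact that for a field $W=(W_0,{\bf W})$ on $\R^{1+n}_+$ the distributional condition $\curl_{t,x}W=0$ (that is, $\partial_iW_j=\partial_jW_i$ for all indices) is equivalent to $\partial_t{\bf W}=\nabla_xW_0$ together with $\curl_x{\bf W}=0$, by separating the index $0$ from the $x$-indices; applied to $W=U$ this says \eqref{system2} holds if and only if $\partial_t{\bf U}=\nabla_xU_0$ and \eqref{system4} holds.

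Putting these together, \eqref{system3} is equivalent to \eqref{system1} together with $\partial_t{\bf U}=\nabla_xU_0$, and the latter combined with \eqref{system4} is equivalent to \eqref{system2}; therefore \eqref{system3} and \eqref{system4} hold simultaneously if and only if \eqref{system1} and \eqref{system2} do, which is the assertion. The steps are routine bookkeeping that realigns the second-order divergence-form system with the first-order evolution equation governed by $D\hat A$; the only points I would be careful to state are the pointwise invertibility of $\left[\begin{array}{cc} a & b \\ 0 & 1\end{array}\right]$ and the distributional curl characterisation, and I do not anticipate any genuine obstacle.
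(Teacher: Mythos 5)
Your proposal is correct and follows essentially the same route as the paper: you rewrite the evolution equation for $V$ in terms of $U$ using the factorisation $\hat A\left[\begin{smallmatrix} a & b \\ 0 & 1\end{smallmatrix}\right]=\left[\begin{smallmatrix} 1 & 0 \\ c & d\end{smallmatrix}\right]$, identify the first block with (\ref{system1}) and the second with $\partial_t{\bf U}-\nabla_xU_0=0$, and observe that the latter together with (\ref{system4}) is exactly (\ref{system2}). The only additions beyond the paper's argument are the explicit remarks on the pointwise invertibility of $\left[\begin{smallmatrix} a & b \\ 0 & 1\end{smallmatrix}\right]$ and the splitting of $\curl_{t,x}$, both of which are consistent with the surrounding text.
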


\begin{proof} Equation (\ref{system3}), namely
$$\frac\partial{\partial t}\left[\begin{array}{c}
V_0 \\ {\bf V} \end{array}\right] + \left[\begin{array}{cc}
0&\text{ div}\, \\ -\nabla &0 \end{array}\right]\left[\begin{array}{cc}
1 & 0 \\ c & d \end{array}\right]\left[\begin{array}{cc}
a&b \\ 0&1 \end{array}\right]^{-1}\left[\begin{array}{c}
V_0 \\ {\bf V} \end{array}\right] = 0\ ,
$$ 
is equivalent to
$$\frac\partial{\partial t}\left[\begin{array}{cc}
a&b \\ 0&1 \end{array}\right]\left[\begin{array}{c}
U_0 \\ {\bf U} \end{array}\right] + \left[\begin{array}{cc}
0&\divv \\ -\nabla &0 \end{array}\right]\left[\begin{array}{cc}
1 & 0 \\ c & d \end{array}\right]\left[\begin{array}{c}
U_0 \\ {\bf U} \end{array}\right] = 0
\ ,$$
which is in turn equivalent to the pair of equations
\begin{align*}\left[\begin{array}{cc} \frac\partial{\partial t}  &\divv\end{array}\right]
\left[\begin{array}{cc}
a & b \\ c & d \end{array}\right]\left[\begin{array}{c}
U_0 \\ {\bf U} \end{array}\right] &= 0\\
\frac\partial{\partial t}{\bf U}-\nabla U_0&=0\ .
\end{align*}
The first of these is (\ref{system1}), while the second, when taken together with (\ref{system4}), is (\ref{system2}). 
\end{proof}

The Neumann condition (\ref{neu}) becomes $V_0=w$ on $\R^n$.

\subsection{The map $A\mapsto\hat A$}
Let us look at  the correspondence between a bounded strictly accretive matrix  $A=\left[\begin{array}{cc}
a & b \\ c & d \end{array}\right]$ and $\hat A=\left[\begin{array}{cc}
1 & 0 \\ c & d \end{array}\right]\left[\begin{array}{cc}
a&b \\ 0&1 \end{array}\right]^{-1}$. 
 As observed before,  the sub-matrix $a$ is itself bounded, strictly accretive on $L^2(\R^n,\C^N)$, and  
the inverse $\left[\begin{array}{cc}
a&b \\ 0&1 \end{array}\right]^{-1}$ exists and is bounded. For interest, we note that $\hat{\hat A}=A$.

\begin{prop} \label{prop:hatA} Given a bounded matrix $A$ as above which is strictly accretive on  $\clos{\ran(D)}$, then $\hat A$ is also bounded, strictly accretive on $\clos{\ran(D)}$.
\end{prop}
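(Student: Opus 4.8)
The plan is to reduce both claims about $\hat A$ to a single algebraic identity relating the quadratic form of $\hat A$ to that of $A$. Write $M:=\begin{bmatrix} a & b \\ 0 & 1\end{bmatrix}$ and $N:=\begin{bmatrix} 1 & 0 \\ c & d\end{bmatrix}$, regarded as multiplication operators on $\mH=L^2(\R^n,\C^N\oplus\C^{nN})$, so that $\hat A=NM^{-1}$ and, in the notation of the previous subsection, $V=MU$. Boundedness of $\hat A$ is then clear: $N$ is bounded since $c,d\in L^\infty$, and $M^{-1}=\begin{bmatrix} a^{-1} & -a^{-1}b \\ 0 & 1\end{bmatrix}$ is bounded because the pointwise accretivity $\re(a(x)\xi,\xi)\ge\kappa|\xi|^2$ noted above forces $a(x)^{-1}$ to exist with $|a(x)^{-1}|\le\kappa^{-1}$ uniformly in $x$. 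So the only real issue is the coercivity estimate on $\clos{\ran(D)}$.

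The key step I would isolate is the identity
$$\re(\hat A V,V)=\re(AU,U)\qquad\text{whenever } V=MU.$$
To prove it, note $\hat A V=NM^{-1}(MU)=NU$, and with $U=\begin{bmatrix}U_0\\ \mathbf{U}\end{bmatrix}$ one has $AU=\begin{bmatrix}aU_0+b\mathbf{U}\\ cU_0+d\mathbf{U}\end{bmatrix}$ and $NU=\begin{bmatrix}U_0\\ cU_0+d\mathbf{U}\end{bmatrix}$; thus $AU$ and $\hat A V$ share the same second component $cU_0+d\mathbf{U}$, while their first components are $V_0=aU_0+b\mathbf{U}$ and $U_0$ respectively. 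Since also $\mathbf{V}=\mathbf{U}$, taking inner products yields $(AU,U)=(V_0,U_0)+(cU_0+d\mathbf{U},\mathbf{U})$ and $(\hat A V,V)=(U_0,V_0)+(cU_0+d\mathbf{U},\mathbf{U})$, and these have equal real part because $\re(U_0,V_0)=\re(V_0,U_0)$.

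To conclude, I would use that $M$ restricts to a bounded bijection of $\clos{\ran(D)}=L^2(\R^n,\C^N)\oplus\clos{\ran(\nabla)}$ onto itself with bounded inverse: both $M$ and $M^{-1}$ leave the lower block unchanged, hence map $L^2\oplus\clos{\ran(\nabla)}$ into itself, while $MM^{-1}=M^{-1}M=I$. Hence, for any $V\in\clos{\ran(D)}$ the vector $U:=M^{-1}V$ again lies in $\clos{\ran(D)}$, so the hypothesis $\re(AU,U)\ge\kappa\|U\|^2$ applies; combined with $\|U\|\ge\|M\|^{-1}\|V\|$ (from $\|V\|=\|MU\|\le\|M\|\,\|U\|$, with $\|M\|<\infty$ since $a,b\in L^\infty$), it gives
$$\re(\hat A V,V)=\re(AU,U)\ge\kappa\|U\|^2\ge\kappa\|M\|^{-2}\|V\|^2,$$
which is strict accretivity of $\hat A$ on $\clos{\ran(D)}$, with constant $\kappa\|M\|^{-2}$.

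I do not anticipate a genuine obstacle; the proof is bookkeeping once the identity is spotted. The single point needing care is that $A$ is assumed coercive only on the subspace $\clos{\ran(D)}$ rather than on all of $\mH$, so one must verify that the substitution $U\leftrightarrow MU$ respects that subspace — which it does precisely because the lower block of $M$ is the identity, so the constraint that the lower component lie in $\clos{\ran(\nabla)}$ is preserved. As a sanity check, the involution $\hat{\hat A}=A$ is consistent with the symmetry of the displayed identity in $U$ and $V$.
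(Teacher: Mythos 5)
Your proof is correct and rests on exactly the same key identity as the paper's, namely $\re(\hat A M u, M u)=\re(Au,u)$ with $M=\left[\begin{smallmatrix}a&b\\0&1\end{smallmatrix}\right]$; the paper states this identity and calls the rest "easily verified", while you spell out that verification (invertibility of $M$ via pointwise accretivity of $a$, and the fact that $M$ and $M^{-1}$ preserve $\clos{\ran(D)}$ because their lower block is the identity). No gaps.
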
 

\begin{proof} This is easily verified, once we have the following identity:
\begin{align*} \re\left(\hat A\left[\begin{array}{cc}
a & b \\ 0 & 1 \end{array}\right]\left[\begin{array}{c}
u_0 \\ {\bf u} \end{array}\right], \left[\begin{array}{cc}
a & b \\ 0 & 1 \end{array}\right]\left[\begin{array}{c}
u_0 \\ {\bf u} \end{array}\right]\right)&=\re((u_0,au_0+b{\bf u}) + (cu_0+d{\bf u}, {\bf u}))\\&=\re((au_0+b{\bf u},u_0) + (cu_0+d{\bf u}, {\bf u}))\\&=
 \re\left(A\left[\begin{array}{c}
u_0 \\ {\bf u} \end{array}\right], \left[\begin{array}{c}
u_0 \\ {\bf u} \end{array}\right]\right)\ .
\end{align*}
\end{proof}

Self-adjointness is not preserved under this transformation. Indeed, it can readily be checked that $A$ is self-adjoint if and only if $\hat A$ has the form $\hat A =
\left[\begin{array}{cc}\alpha&\beta\\ \gamma&\delta\end{array}\right]$ with $\alpha=\alpha^*$, $\beta=-\gamma^*$ and $\delta=\delta^*$.

\subsection{Results for $D\hat A$}\label{related}

   We now apply the theory which we have developed concerning operators of the form $DB$ to the present situation. So take $\mH=L^2(\R^n,\C^{N}\oplus\C^{nN})$, $B=\hat A$ and 
\begin{equation*}D=\left[\begin{array}{cc}
0&\divv \\ -\nabla &0 \end{array}\right] .
\end{equation*}

The pair  ($D$, $\hat A$) satisfies (H1-5) and $\hat A$ is a multiplication operator.  Therefore, by Proposition \ref{spectraldecomp}, the spectral projection $E^{\hat A}_+:\clos{\ran(D)}\to \mH_{D\hat A+}$ is bounded. 

The functions $v\in\mH_{D\hat A+}$ are precisely the boundary values of functions $V$ on $\R^{1+n}_+$ which satisfy (\ref{system3}) and (\ref{system4}). Moreover $V(t)=e^{-tD\hat A}v$ for $t>0$. And the Neumann problem for (\ref{pde}) is the determination of $v$, and hence $V$, from its first component,  $v_0=w$, where  $w\in L^2(\R^n,\C^N)$ is given. We are writing  $v=\left[\begin{array}{c} v_0\\ {\bf v}\end{array}\right]$. So, on defining the operator $P_{\hat A}: \mH_{D\hat A+}\to L^2(\R^n,\C^N)$ by $P_{\hat A} v =v_0$, we have the following result.

\begin{prop} The Neumann problem (\ref{pde})  is  well posed in the $L^\infty(L^2)$ sense if and only if  $P_{\hat A}:\mH_{D\hat A+}\to L^2(\R^n,\C^N)$ has a bounded inverse. 
\end{prop}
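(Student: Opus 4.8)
The plan is to show the two directions of the equivalence, translating well-posedness of the Neumann problem into invertibility of $P_{\hat A}$ via the dictionary established in the preceding propositions. The key facts I would invoke are: (1) by the proposition relating $U$ and $V$, the system (\ref{system1})--(\ref{system2}) for $U\in C^1(\R_+,L^2)$ is equivalent to (\ref{system3})--(\ref{system4}) for $V=\left[\begin{array}{cc}a&b\\0&1\end{array}\right]U$, with the Neumann datum $(Au)_0=w$ becoming $V_0=w$; (2) by Proposition~\ref{spectraldecomp}(iv) together with the remark following it, the solutions of $\frac\partial{\partial t}V+D\hat AV=0$ with $\lim_{t\to\infty}V(t,\cdot)=0$ are exactly $V(t)=e^{-tD\hat A}v$ for $v\in\mH_{D\hat A+}$, and these have $\lim_{t\to0}e^{-tD\hat A}v=v$; (3) the statement (quoted just before the proposition) that the functions $v\in\mH_{D\hat A+}$ are precisely the boundary values of the $V$ satisfying (\ref{system3}) and (\ref{system4}), so that the extra curl constraint (\ref{system4}) is automatically encoded in membership of $\mH_{D\hat A+}$.

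Given this, the argument is essentially bookkeeping. First I would fix $w\in L^2(\R^n,\C^N)$ and set up the correspondence: a solution $U$ of the Neumann problem with data $w$ corresponds bijectively, via $V=\left[\begin{array}{cc}a&b\\0&1\end{array}\right]U$ (an isomorphism of $C^1(\R_+,L^2)$ onto itself since $\left[\begin{array}{cc}a&b\\0&1\end{array}\right]$ is a bounded, boundedly invertible multiplication operator), to a solution $V$ of (\ref{system3})--(\ref{system4}) with $\lim_{t\to\infty}V=0$, $\lim_{t\to0}V=v$ for some $v$, and $v_0=w$. By facts (2) and (3), such $V$ are in one-to-one correspondence with elements $v\in\mH_{D\hat A+}$, the correspondence being $V(t)=e^{-tD\hat A}v$. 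Under this chain of bijections, the boundary condition becomes $P_{\hat A}v=v_0=w$. Hence: the Neumann problem has a solution for every $w$ iff $P_{\hat A}$ is surjective, and the solution is unique for every $w$ iff $P_{\hat A}$ is injective. Combining, well-posedness in the $L^\infty(L^2)$ sense is equivalent to $P_{\hat A}\colon\mH_{D\hat A+}\to L^2(\R^n,\C^N)$ being a bijection, and since both spaces are Hilbert spaces and $P_{\hat A}$ is bounded, the open mapping theorem upgrades this to having a bounded inverse.

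The main point requiring care — and the one place I would slow down — is verifying that the three limiting conditions ($t\to\infty$, $t\to0$, and the $C^1$ regularity with the equation holding in the strong sense) match up correctly on both sides of the correspondence, and in particular that I am entitled to assert that \emph{every} $C^1$ solution of $\frac\partial{\partial t}V+D\hat AV=0$ decaying at infinity is of semigroup form $e^{-tD\hat A}v$ with $v\in\mH_{D\hat A+}$. This is exactly the content of the ``Indeed $V:=e^{-tDB}v$ is the unique function\ldots'' remark after Proposition~\ref{spectraldecomp} applied with $B=\hat A$, so I would cite that; the uniqueness clause there is what forces the solution to lie in the $+$ spectral subspace and rules out contributions from $\mH_{D\hat A-}$ (which blow up) or $\nul(D\hat A)$. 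I would also note explicitly that $\hat A$ is bounded and strictly accretive on $\clos{\ran(D)}$ by Proposition~\ref{prop:hatA}, so that the pair $(D,\hat A)$ indeed satisfies (H1-5) and Proposition~\ref{spectraldecomp} applies; self-adjointness of $A$ is not needed for this equivalence (it will be needed only afterwards to prove that $P_{\hat A}$ actually is invertible, which is Proposition~\ref{prop:nsa}). No genuinely hard estimate enters here — the work is entirely in assembling the already-established structural results.
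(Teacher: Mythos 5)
Your argument is correct and is essentially the paper's own: the paper gives no formal proof of this proposition, but derives it from exactly the dictionary you assemble (the $U\leftrightarrow V$ equivalence, the semigroup characterisation of decaying solutions of (\ref{system3})--(\ref{system4}) with traces in $\mH_{D\hat A+}$, and the identification of the Neumann datum with $P_{\hat A}v=v_0$). Your additional remarks — the open mapping theorem to pass from bijectivity to bounded invertibility, and the flag that the ``precisely the boundary values'' claim rests on the uniqueness remark following Proposition~\ref{spectraldecomp} — are accurate and, if anything, slightly more careful than the text.
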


This means that, given $w\in L^2(\R^n,\C^N)$, there exists a unique function $v\in \mH_{D\hat A+}$ with $v_0=w$, and hence there exists a unique function $V\in C^1(\R_+,L^2(\R^n,\C^N\oplus\C^{nN}))$ which satisfies
$\frac\partial{\partial t}V+D\hat AV=0$, $\lim_{t\to\infty}V(t)=0$, $\lim_{t\to0}V(t)=v$.

Now the Neumann problem is not always well-posed \cite{KR}, and $P_{\hat A}$ is not always an isomorphism.

Our aim though is to prove Proposition \ref{prop:nsa}, or in other words, to show that when $A$ is self-adjoint, then  $P_{\hat A}$ is an isomorphism.

\subsection{Unperturbed operators}\label{unperturbed} In order to tie our results in with the classical theory, we start with $A=\hat A=I$. In this case $v=\left[ \begin{array}{c}
v_0  \\ {\bf v}  
\end{array}
\right]\in \mH_{D+}$ if and only if $\curl {\bf v}=0$ and $Dv=\sqrt{(D^2)}v$, i.e.
$$
\left[ \begin{array}{cc}
0&\divv  \\ -\nabla&0  
\end{array}
\right]
\left[ \begin{array}{c}
v_0  \\ {\bf v}  
\end{array}
\right]
=
\left[ \begin{array}{cc}
\sqrt{-\Delta}  &0\\ 0&\sqrt{-\nabla \divv}
\end{array}
\right]
\left[ \begin{array}{c}
v_0  \\ {\bf v}  
\end{array}
\right]$$
where $\Delta=\divv\nabla$ is the Laplacian on $\R^n$. That is, ${\bf v} = -\nabla (-\Delta)^{-1/2} v_0$ , or equivalently, 
$v_0=(-\Delta)^{-1/2}\divv\, {\bf v}$ and $\curl\, {\bf v}=0$. 

Thus the map $P_I$ is an isomorphism, and so, as we know,  the Neumann problem for the Laplacian on the upper half space is well posed in the $L^\infty(L^2)$  sense.

\subsection{Rellich inequality}\label{rellich} We now consider bounded, strictly accretive, self-adjoint matrices $A$. In this case $\hat A$ has the form $\hat A=
\left[
\begin{array}{cc}
\alpha   & \beta  \\ \gamma  &\delta\end{array} \right]$ with  $\alpha =\alpha^*$, $\beta=-\gamma^*$ and $\delta = \delta^*$, or in other words,
$(\hat A)^*K=K\hat A$ where $K=\left[
\begin{array}{cc}
1   & 0  \\0  &-1\end{array} \right]$. 

Since $KD+DK=0$, we have, for $v\in \mH_{D\hat A+}$ and $V=e^{-tD\hat A}v$, that
\begin{equation*}
(K\hat Av,v)=-\int_0^\infty(K\tfrac\partial{\partial t}V, \hat AV)+(K\hat AV, \tfrac\partial{\partial t}V)
=\int_0^\infty(KD\hat AV,\hat AV)+(DK\hat AV,\hat AV)=0\end{equation*}
or in other words,
$$(\alpha v_0,v_0)+2\re(\beta {\bf v},v_0) - (\delta{\bf v},{\bf v})=0.$$
Therefore, as ${\bf v}\in\clos{\ran(\nabla)}$ and $\delta$ is strictly accretive on $\clos{\ran(\nabla)}$, we have
$$\|{\bf v}\|^2\lesssim \re(\delta {\bf v},{\bf v}) \lesssim\|v_0\|^2+\|{\bf v}\|\|v_0\|$$
and hence
 the Rellich inequality
 $$ \|{\bf v}\|\lesssim \|v_0\|$$
for all $v\in \mH_{D\hat A+}$. This in turn tells us that the mapping $P_{\hat A}:v\to v_0$ satisfies the \textit{a priori} estimates $\|P_{\hat A} v\|\approx\|v\|$ for all $v\in \mH_{D\hat A+}$. 
 
 To prove surjectivity of $P_{\hat A}$, use the method of continuity (i.e. the invariance of semi-Fredholm index) for the family $B_\tau = \tau \hat A +(I-\tau)I$, $0\leq\tau\leq1$, and the fact that $P_I$ is an isomorphism. For this we  need to know that the spaces $\mH_{D{B_\tau}+}$, or in other words the projections $E^{B_\tau}_+$, depend continuously on $\tau$. 
 
 In fact $B_\tau$ depends analytically on $\tau\in\C$. Moreover, on some open subset $\Omega\subset\C$ which contains the closed real interval $[0,1]$, the operators $D,B_\tau$ satisfy (H1-5) with uniform bounds. Therefore, by Proposition \ref{analytic},  the projections $E^{B_\tau}_+$ are actually analytic in $\tau$.
 
This completes the proof of Proposition \ref{prop:nsa}.

\begin{rem} 

The results on analytic dependence imply even more, namely that the Neumann problem is well-posed in the $L^\infty(L^2)$ sense for all coefficient matrices $A$ which are sufficiently small perturbations of a self-adjoint matrix. See \cite{AAM}.

Our aim has been to show that the
functional calculus provides a mechanism to turn a Rellich estimate
(i.e., comparability in the $L^2$ norm of transverse and tangential derivatives,
which always holds for self-adjoint, $t$-independent, divergence
form elliptic operators) into $L^2$ solvability. Previously, such a mechanism
had existed in the case of a single equation with real
symmetric coefficients (e.g., as in \cite{JK1,KP}), or in the
case of a block diagonal matrix (i.e., the setting of the Kato problem,
in which case the square function estimates are equivalent to a
Rellich estimate), {or in the case of a system with constant coefficients}.
\end{rem}

\section{Inhomogeneous $D$ when $k=1$}

Consider  operators $D$ and $B$ on $\mH=L^2(\R^n, \C^m)$ with the requirement (inhH) which consists of   (inhH1):   The operator $D:\dom(D) \rightarrow \mH$ is a (inhomogeneous)  first order differential operator with constant coefficients, (H2),   (inhH3): 
$$
\| u\| +  \|\nabla  u\| \lesssim \|Du\|  , \quad  \text{for all }\ u \in \dom(D) \cap \clos{\ran(D)},
$$
 (H4,5)  and $B$ is a multiplication by a matrix-valued function. 

Then we claim that the conclusion of the main theorem is valid replacing (H) by (inhH).  The proof consists in going back to the homogeneous case   (It would be nice to have a direct proof as in Section \ref{sec:PMT}). 

Write $D= D_1 + D_0$ where $D_1$ is a homogeneous first order differential operator and $D_0$ is multiplication by a constant matrix. We observe that since $D$ is self-adjoint, so are $D_0$ and $D_1$ as seen by computing the Fourier symbols.
Also, using Fourier arguments again,  one can check that (H3)  holds for  $D_1$.
So one can define $(I+itBD_1)^{-1}$, the corresponding $Q_t^1$, and obtain
the conclusion of Theorem \ref{th:main} for $Q_t^1$.
Observe that 
$$
\|(I+itBD)^{-1} - (I+itBD_1)^{-1}\|=\| (I+itBD)^{-1} (tBD_0) (I+itBD_1)^{-1}\|  \lesssim | t |.
$$
 Recall that $Q^B_t= \frac 1 {2i} (R^B_{-t} - R^B_t)$, hence
 $$
 \|tBD(I+t^2BDBD)^{-1} - tBD_1(I +t^2BD_1BD_1)^{-1} \| \lesssim t.
$$
So $\qezero{\Theta_tu}{1} \lesssim \|u\|^2$ for all $u \in \mH$ follows from \eqref{eq:sfBD} for 
$BD_1$. 

It remains to control $\qeinf{\Theta_tu}1$  and we know that it suffices to assume $u=Dv$
with $v\in \dom(D)$. We can also assume $v\in \clos{\ran(D)}$ from \eqref{eq:splitting}. But $\|\Theta_t D\| \lesssim t^{-1}$, hence  $\qeinf{\Theta_tDv}1 \lesssim \|v\|^2$  and we conclude invoking (inhH3).

For the  application to the Kato problem for second order elliptic operators with lower order terms 
$L= - \divv A \nabla + \divv b - c\nabla +d$,  where $A, b, c, d$ are multiplication operators with $L^\infty(\R^n, \C)$ coefficients and $ \begin{bmatrix}
        A & b\\
      c & d
     \end{bmatrix}$ is strictly accretive on $L^2(\R^n, \C^{n+1})$,  we take
\begin{equation}  \label{eq:definhD}
 D:=
     \begin{bmatrix}
        0 & \divv_x & 1 \\
        -\nabla_x & 0 &0 \\
        1 & 0& 0
     \end{bmatrix}, \quad 
        B:=  \begin{bmatrix}
        1 & 0 & 0 \\
        0 & A & b\\
        0& c & d
     \end{bmatrix}.
\end{equation}

 \begin{rem} The inhomogenous version for higher order $D$  is  in \cite[Section 0.7]{AT}. Further results on inhomogeneous problems are in \cite{AKMc1}.
 \end{rem}

\section{Related operators and  further  applications 
} \label{sec:relops}

 The aim of this section is to see that operators $BD$ or $DB$ are the building blocks of other operators obtained by functional analytic considerations in such a way that results for $DB$ and $BD$ apply immediately. We will finish with a concrete application to the functional calculus of Dirac type operators on forms.

\subsection{Operators of type $\Gamma+ B^{-1}\Gamma^* B$}

This class of perturbed Dirac operators was studied in \cite{AKMc}, where bisectoriality
and quadratic estimates where proved. As a corollary, our Theorem~\ref{th:main} was
proved in \cite[Theorem 3.1(iii)]{AKMc}.
In this section we prove the converse to this, i.e. we deduce the results in \cite{AKMc} from
Theorem~\ref{th:main}.

As in  \cite{AKMc}, we consider the following slightly more general situation.
Let $\Gamma$ and $\Gamma^*$ be adjoint nilpotent operators in an  arbitrary   Hilbert space $\mH$, i.e. $\Gamma^2= (\Gamma^*)^2=0$,
let $B_1, B_2$ be bounded operators such that $B_1$ is strictly accretive on $\ran(\Gamma^*)$
and $B_2$  is strictly accretive on $\ran(\Gamma)$.
Furthermore, assume  that
$\Gamma^*B_2B_1\Gamma^*=0$ and $\Gamma B_1B_2\Gamma=0$.
This holds in particular if $(B_1,B_2)=(B^{-1},B)$  with $B$ strictly accretive on all of $\mH$ .
For more details on this hypothesis, we refer to (H1-3) in \cite{AKMc}. 
In this case, the operator   on $\mH$ 
$$
\Pi_B:= \Gamma+\Gamma^*_B,\qquad \Gamma^*_B:= B_1 \Gamma^* B_2
$$
induces a Hodge type splitting
\begin{equation}  \label{eq:hodge}
  \mH=  (\nul(\Gamma^*_B)\cap\nul(\Gamma))\oplus \clos{\ran(\Gamma^*_B)}\oplus \clos{\ran(\Gamma)}. 
\end{equation}

Now consider the operators 
$$
  D:= \begin{bmatrix} 0 & \Gamma^* \\ \Gamma & 0 \end{bmatrix},
  \qquad
 B:= \begin{bmatrix} B_1 & 0 \\ 0 & B_2 \end{bmatrix}
$$
acting in $\mH\oplus \mH$.
Note that the hypothesis on $\Gamma, \Gamma^*, B_1, B_2$ above is equivalent
with (H2,4,5) for $D, B$.
 Using the fact that the Hodge splitting (\ref{eq:hodge}) is topological,   the map 
$$
S:\mH \longrightarrow \mH \oplus \mH
: f_0+f_{1}+f_2\longmapsto 
  \begin{bmatrix} f_1 \\ B_2 f_2 \end{bmatrix}
$$
 is well-defined and bounded,  and one can show that  the restriction of $S$ to  $\clos{\ran(\Pi_B)}= \clos{\ran(\Gamma^*_B)}\oplus \clos{\ran(\Gamma)}$  is an isomorphism onto $\clos{\ran(BD)}$. 
Since 
\begin{multline*}
  BDS(0+f_1+f_2)= 
  \begin{bmatrix} 0 & B_1\Gamma^* \\ B_2\Gamma & 0 \end{bmatrix}
  \begin{bmatrix} f_1 \\ B_2 f_2 \end{bmatrix}
  = \begin{bmatrix} \Gamma^*_B f_2 \\ B_2 (\Gamma f_1) \end{bmatrix} \\
  = S(0+ \Gamma^*_B f_2 + \Gamma f_1)= S(\Pi_B(0+ f_1+f_2) ),
\end{multline*}
we have shown  that the restrictions  $BD:\clos{\ran(BD)}\rightarrow \clos{\ran(BD)}$ and 
$
  \Pi_B: \clos{\ran(\Pi_B)}\longrightarrow \clos{\ran(\Pi_B)}
$
are similar operators.  Consequently, we obtain the following.

\begin{prop}
  Let $\Pi_B$ in $\mH$ and $BD$ in $\mH\oplus \mH$ be as above.
$ \Pi_B$ 
is a bisectorial operator on $\mH$ with resolvent bounds.
Furthermore, if $BD$
satisfies quadratic estimates, so does
$\Pi_B$.
\end{prop}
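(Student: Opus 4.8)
The plan is to leverage the intertwining established just above: on $\clos{\ran(\Pi_B)}$ one has $BD\,S=S\,\Pi_B$ with $S$ an isomorphism onto $\clos{\ran(BD)}$, while $\Pi_B$ vanishes on the complementary summand $\nul(\Pi_B)=\nul(\Gamma^*_B)\cap\nul(\Gamma)$ of the topological Hodge splitting \eqref{eq:hodge}. Since the hypothesis on $\Gamma,\Gamma^*,B_1,B_2$ is equivalent to (H2,4,5) for $D,B$, Proposition~\ref{prop:typeomega} applies to $BD$ on $\mH\oplus\mH$, and I would transfer everything it supplies through $S$, summand by summand.

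For bisectoriality, fix $\lambda\notin S_\omega$. By Proposition~\ref{prop:typeomega}(iii) the restriction of $\lambda-BD$ to $\clos{\ran(BD)}$ is invertible with $\|(\lambda-BD)^{-1}\|\lesssim1/\dist(\lambda,S_\omega)$; from $(\lambda-BD)S=S(\lambda-\Pi_B)$ and the fact that $S$ is an isomorphism between the two closed ranges, $\lambda-\Pi_B$ is invertible on $\clos{\ran(\Pi_B)}$ with $(\lambda-\Pi_B)^{-1}=S^{-1}(\lambda-BD)^{-1}S$, hence $\|(\lambda-\Pi_B)^{-1}\|\lesssim\|S\|\,\|S^{-1}\|/\dist(\lambda,S_\omega)$. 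On $\nul(\Pi_B)$ the operator $\Pi_B$ is $0$, so $(\lambda-\Pi_B)^{-1}=\lambda^{-1}I$ there and $1/|\lambda|\le1/\dist(\lambda,S_\omega)$ since $0\in S_\omega$. As \eqref{eq:hodge} is topological (bounded projections) and $\Pi_B$ respects it — its range lies in $\clos{\ran(\Pi_B)}$ and it kills the null summand — patching the two pieces yields $\sigma(\Pi_B)\subset S_\omega$ together with the resolvent bound on all of $\mH$.

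For the quadratic estimate, note that $Q_t^{\Pi_B}:=t\Pi_B(I+t^2\Pi_B^2)^{-1}=\tfrac1{2i}\big((I-it\Pi_B)^{-1}-(I+it\Pi_B)^{-1}\big)$ is a fixed rational expression in the resolvents, so the identity above propagates to $Q_t^{\Pi_B}=S^{-1}Q_t^{BD}S$ on $\clos{\ran(\Pi_B)}$, while $Q_t^{\Pi_B}$ annihilates $\nul(\Pi_B)$. Decomposing $u\in\mH$ along \eqref{eq:hodge} as $u=u_0+u_1+u_2$ with $u_0\in\nul(\Pi_B)$, and writing $u'=u_1+u_2$ so that $\|u'\|\lesssim\|u\|$, this gives
\begin{align*}
\qe{Q_t^{\Pi_B}u}
&=\qe{S^{-1}Q_t^{BD}S u'}\\
&\le\|S^{-1}\|^2\,\qe{Q_t^{BD}(S u')}
\lesssim\|S^{-1}\|^2\,\|S\|^2\,\|u\|^2,
\end{align*}
the last step being precisely the assumed quadratic estimate for $BD$. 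The dual estimate, and hence a bounded holomorphic functional calculus for $\Pi_B$, follows by repeating the argument with $(B_1,B_2)$ replaced by $(B_2^*,B_1^*)$, for which the standing hypotheses still hold.

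I expect the only real difficulty to be organisational rather than substantive: $S$ is an isomorphism merely between the closed ranges, so each assertion on $\mH$ must first be split through the two topological decompositions $\mH\oplus\mH=\clos{\ran(BD)}\oplus\nul(BD)$ and \eqref{eq:hodge} before transferring, and one must check that the resolvent-based functional calculus really carries across the intertwining — which it does because the restricted operators are closed, injective and densely ranged, as recorded in Proposition~\ref{prop:typeomega}(iii).
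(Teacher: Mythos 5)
Your proposal is correct and is essentially the paper's own argument: the proposition is deduced directly from the intertwining $BD\,S=S\,\Pi_B$ established just above, with $S$ an isomorphism between the closed ranges, transferring resolvent bounds and quadratic estimates on $\clos{\ran(\Pi_B)}$ and handling $\nul(\Pi_B)$ trivially via the topological Hodge splitting. The paper leaves these transfer steps implicit ("Consequently, we obtain the following"), whereas you spell them out; no discrepancy in substance.
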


\subsection{Operators of type $BD_1+D_2B^{-1}$}

In this section we aim to deduce quadratic estimates  for operators well adapted to boundary value problems for differential forms. 
Similar to our discussion of the class of operators $\Pi_B$ above, we consider the
following slightly more general class of operators 
$$
B_1D_1+ D_2 B_2
$$ acting in an  arbitrary   Hilbert space $\mH$.
We assume that the two pairs of operators $(D_1,B_1)$ and $(D_2,B_2)$ both
satisfy (H2,4,5), and the following compatibility conditions.
For the unperturbed operators $D_1$ and $D_2$, we assume that
$\ran(D_1)\subset \nul(D_2)$ and $\ran(D_2)\subset \nul(D_1)$.
This means in particular that $D_2D_1=0= D_1D_2$ on appropriate domains.
For the perturbed operators we assume that 
 $B_2B_1: \ran(D_1)\rightarrow \nul(D_2)$ so that $D_2B_2B_1D_1=0$
on  $\dom(D_{1})$.  This holds in particular if $(B_1,B_2)=(B,B^{-1})$  with $B$ strictly accretive on all of $\mH$. 

Similar to the fact that an operator $\Pi_B$ acts by swapping the two ranges in its Hodge splitting,
an operator $B_1D_1+ D_2 B_2$ acts diagonally in the associated splitting
\begin{equation}  \label{eq:hodgetypesplitting}  
  \mH=  \Big(\nul(B_1D_1) \cap \nul(D_2B_2)\Big)\oplus \clos{\ran(B_1D_1)} \oplus \clos{\ran(D_2B_2)} 
\end{equation}
of $\mH$. To see this splitting, note that
$$
  \mH= \clos{\ran(B_1D_1)} \oplus \nul(B_1D_1) = \nul(D_2B_2)\oplus \clos{\ran(D_2B_2)},
$$
and (\ref{eq:hodgetypesplitting}) follows  by intersecting these two splittings since
$\ran(B_1D_1)\subset\nul(D_2B_2)$ and $\ran(D_2B_2)\subset \nul(B_1D_1)$.
Since $B_1D_1$ and $D_2B_2$ act as
$$
  B_1D_1= 0\oplus B_1D_1\oplus  0,\qquad D_2B_2=0\oplus 0 \oplus D_2B_2 
$$
in the splitting (\ref{eq:hodgetypesplitting}), 
the operator $B_1D_1+ D_2B_2$ is the direct sum of these two operators,
namely
$$
   B_1D_1+D_2B_2:\mH\longrightarrow \mH: f_0+f_1+f_2\longmapsto 0+ B_1D_1 f_1+D_2B_2 f_2. 
$$

This shows the following.

\begin{prop}  \label{prop:dbtobdplusdb}
  Assume that $D_1,B_1$ and $D_2, B_2$ are as above. Then $B_1D_1+D_2B_2$ is a bisectorial operator on $\mH$ with resolvent bounds. 
Furthermore, if $B_1D_1
$
and 
$D_2B_2$
satisfy
quadratic estimates, then so does
 $B_1D_1+D_2B_2$.

\end{prop}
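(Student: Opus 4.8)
The plan is to deduce this proposition from the already-established theory of operators of type $BD$ (Proposition~\ref{prop:typeomega}, Theorem~\ref{th:main}, Proposition~\ref{cor:main}) by the same similarity-transfer strategy used in the preceding subsection for $\Pi_B$. First I would record the Hodge-type splitting \eqref{eq:hodgetypesplitting}, which the text has already derived from intersecting the two bisectorial splittings associated with $B_1D_1$ and $D_2B_2$; this immediately gives that $B_1D_1+D_2B_2$ acts diagonally, namely as $0\oplus B_1D_1\oplus D_2B_2$. Bisectoriality with resolvent bounds then follows componentwise: on $\nul(B_1D_1)\cap\nul(D_2B_2)$ the operator is zero, and on each of the closed range pieces the resolvent estimates are inherited from Proposition~\ref{prop:typeomega}(i) (applied to the pair $(D_1,B_1)$) and its obvious analogue for $D_2B_2$ (or from the $DB$-version, Proposition~\ref{prop:typeomega2}, after noting $D_2B_2$ is of that form). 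One must check that the three subspaces in \eqref{eq:hodgetypesplitting} are invariant and that the splitting is topological, so that a block-diagonal operator is bisectorial iff each block is; this is routine once the compatibility conditions $\ran(D_1)\subset\nul(D_2)$, $\ran(D_2)\subset\nul(D_1)$ and $B_2B_1\colon\ran(D_1)\to\nul(D_2)$ are invoked.

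Next, for the quadratic estimate I would use additivity of square-function estimates across a topological direct sum: since resolvents of $B_1D_1+D_2B_2$ are block-diagonal in \eqref{eq:hodgetypesplitting}, the operator $Q_t = t^k(B_1D_1+D_2B_2)(I+t^{2k}(B_1D_1+D_2B_2)^2)^{-1}$ is the direct sum of the zero operator on the kernel part and the corresponding $Q_t$'s for $B_1D_1$ and for $D_2B_2$. Writing an arbitrary $u\in\mH$ as $u_0+u_1+u_2$ according to \eqref{eq:hodgetypesplitting}, one gets
\begin{equation*}
\qe{Q_t u} \lesssim \qe{Q_t^{B_1D_1} u_1} + \qe{Q_t^{D_2B_2} u_2} \lesssim \|u_1\|^2 + \|u_2\|^2 \lesssim \|u\|^2,
\end{equation*}
using the hypothesised quadratic estimates for $B_1D_1$ and $D_2B_2$ on their respective closures of ranges, together with the boundedness of the projections associated with the topological splitting. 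The passage $\|u_1\|^2+\|u_2\|^2\lesssim\|u\|^2$ is exactly the statement that the splitting is topological.

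The main obstacle, and the only point requiring genuine care, is justifying that the functional calculus of the diagonal operator really does decompose as a diagonal sum of the functional calculi of the blocks. One needs that each summand in \eqref{eq:hodgetypesplitting} lies in both the $B_1D_1$-splitting and the $D_2B_2$-splitting, that $B_1D_1$ restricted to $\clos{\ran(D_2B_2)}$ vanishes (and vice versa), and that the resolvent of the sum factors accordingly; this hinges on $D_2D_1=0=D_1D_2$ and $D_2B_2B_1D_1=0$, which are precisely the stated compatibility conditions. Once the algebra of these vanishing products is in place — mirroring the computation $BDS=S\Pi_B$ of the previous subsection — the transfer of bisectoriality and quadratic estimates is formal. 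I would present the argument by first stating the block structure as a lemma-free paragraph, then verifying bisectoriality from Proposition~\ref{prop:typeomega} applied to each pair, and finally assembling the square-function bound as above. One may further remark, as in the $\Pi_B$ case, that boundedness of the holomorphic functional calculus of $B_1D_1+D_2B_2$ follows as well, via Proposition~\ref{cor:main} applied to each block, since the adjoint $(B_1D_1+D_2B_2)^* = D_1B_1^* + B_2^*D_2$ has the same structure with the roles of the pairs interchanged.
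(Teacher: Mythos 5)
Your proposal is correct and follows essentially the same route as the paper: the paper's entire argument consists of establishing the topological splitting \eqref{eq:hodgetypesplitting} from the compatibility conditions and observing that $B_1D_1+D_2B_2$ acts block-diagonally as $0\oplus B_1D_1\oplus D_2B_2$, so that bisectoriality, resolvent bounds and quadratic estimates transfer componentwise. Your additional remarks (boundedness of projections for the topological splitting, the structure of the adjoint) are consistent elaborations of the same argument.
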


 \subsection{ An application to Dirac type equations }

We end with an application of the quadratic estimates for the class of operators 
$B_1D_1+D_2B_2$ to boundary value problems for differential forms. 
The goal is to prove that the underlying operator $T_B$ used in \cite{AAH}
to obtain perturbation results for BVPs for Dirac type equations, and in particular for
BVPs for second order divergence form equations, has quadratic estimates
for {\bf all} complex strictly accretive coefficients $B$.
In \cite{AAH}, quadratic estimates were proved only for special types of coefficients $B$,
namely for small perturbations of real symmetric, constant and block form coefficients.

The operators $T_B$ are infinitesimal generators for the studied Dirac equations,
and following \cite[Definition 2.10]{AAH} we have
$$
  T_B= -iM_B^{-1}(\ud+ B^{-1}\ud^* B),
$$
which acts in $L^2(\R^n;\wedge)$, where $\wedge= \wedge_\C \R^{1+n}$ is the full complexified
exterior algebra of $\R^{1+n}$.
Here $\ud$ is a nilpotent differential operator, i.e. $\ud^2=0$, and $B$ is a strictly accretive multiplication operator
so that the operator $T_{B}$, modulo the factor $-iM_B$ (being an invertible, non-accretive, multiplication operator),
is of type $\Pi_B$.   More precisely, if $\mu f= e_0\wedg f$ is exterior multiplication by the basis vector normal to $\R^n$ and $m:= \mu+\mu^*$ then $\ud:= imd$ where $d$ is the exterior derivative, and
$M_B:= \mu^*\mu- B^{-1}\mu\mu^* B$.  
In the work \cite{AAH}, this factor complicated the application of results for the class $\Pi_B$ 
and only gave partial results. 
However, in connection with the later work \cite{AAM}, it was realized that the operators $T_B$
actually are similar to operators of type $B_1D_1+D_2B_2$.
 This similarity uses the  transform $B\mapsto \hat B:= \uB \oB^{-1}$ of strictly accretive matrices analogous to
Proposition \ref{prop:hatA}, on
splitting the space $L^2(\R^n;\wedge)$ into normal and tangential forms and writing the operators  $B, \uB, \oB$ 
as the matrices
$$
  B=
  \begin{bmatrix}
     B_{\no\no} & B_{\no\ta} \\ B_{\ta \no} & B_{\ta\ta} 
  \end{bmatrix}, \quad 
\uB=  \begin{bmatrix}
    I & 0 \\ B_{\ta \no} & B_{\ta\ta}   \end{bmatrix},
\quad
  \oB=  \begin{bmatrix}
    B_{\no\no} & B_{\no\ta} \\ 0 & I  \end{bmatrix}.
$$ 
 This is summarized in  the following new result.
\begin{cor}
Let $B \in L^\infty(\R^n;\mL(\wedge))$ be any complex  coefficient matrix function which is  strictly  accretive on $L^2(\R^n, \wedge)$, matrix function.
Then 
$$
T_B =\oB^{-1} \Big( D_1\hat B  + \hat B^{-1} D_2\Big)\oB,
$$
where 
the differential operators
are $D_1:=  \mu^* d- \mu d^*$ and $D_2:= \mu^* d^*- \mu d$.
In particular
$T_B$ is an injective $\omega$-bisectorial operator, $\omega$
being the angle of accretivity of $\hat B$, has resolvent bounds and satisfies
quadratic estimates in $L^2(\R^n;\wedge)$.
\end{cor}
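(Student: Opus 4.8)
The plan is to verify the asserted factorization $T_B = \oB^{-1}(D_1\hat B + \hat B^{-1}D_2)\oB$ by a direct algebraic computation, and then to invoke Proposition \ref{prop:dbtobdplusdb} together with Corollary \ref{prop:firstorder} (in its $DB$-version, Proposition \ref{spectraldecomp}, or rather the $BD$-version of Theorem \ref{th:main}) to conclude the functional-calculus statements. First I would recall from \cite{AAH} that $T_B = -iM_B^{-1}(\ud + B^{-1}\ud^* B)$ with $\ud = imd$, $M_B = \mu^*\mu - B^{-1}\mu\mu^* B$, and substitute $\ud = imd$ to rewrite the bracket $\ud + B^{-1}\ud^* B = im(d - \text{(conjugated)}\,d^*)$-type expressions; the point is that both $T_B$ and $D_1\hat B + \hat B^{-1}D_2$ are, modulo the invertible multiplication operator $\oB$, operators of the form $\Pi_{\hat B}$-conjugate. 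Concretely, I would split $L^2(\R^n;\wedge)$ into normal and tangential forms using $\mu,\mu^*$, write $B$, $\uB$, $\oB$ in the $2\times 2$ block form displayed in the statement, note $\oB$ is bounded and invertible since $B_{\no\no}$ is strictly accretive (same reasoning as in Proposition \ref{prop:hatA}), and then compute $\oB T_B \oB^{-1}$, checking that the $M_B$ factor is exactly absorbed by the conjugation $B \mapsto \hat B = \uB\oB^{-1}$ — this is the computational analogue of the identity in the proof of Proposition \ref{prop:hatA}.

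Next I would identify the two differential operators. Writing $d = d_{\no} + d_{\ta}$ according to whether the differentiation direction is normal or tangential (equivalently, decomposing $d$ by whether it raises the normal degree), one has $D_1 := \mu^* d - \mu d^*$ and $D_2 := \mu^* d^* - \mu d$. I would check: (a) $D_1, D_2$ are self-adjoint homogeneous first-order constant-coefficient operators, hence satisfy (H1,H2), and (H3) follows by a Fourier-symbol computation exactly as for the operator $D$ in Section \ref{related}; (b) the nilpotency $d^2 = 0 = (d^*)^2$ together with the relations $\mu^2 = (\mu^*)^2 = 0$, $\mu\mu^* + \mu^*\mu = I$ forces $D_2 D_1 = 0 = D_1 D_2$, i.e. $\ran(D_1) \subset \nul(D_2)$ and $\ran(D_2) \subset \nul(D_1)$; (c) the pairs $(D_1,\hat B)$ and $(D_2, \hat B^{-1})$ satisfy (H2,4,5) because $\hat B$ is bounded and strictly accretive (on the relevant subspaces) — this again uses the block structure and the Gårding-type identity from Proposition \ref{prop:hatA}; and (d) the compatibility condition $\hat B^{-1}\hat B : \ran(D_1) \to \nul(D_2)$ is automatic since $\hat B^{-1}\hat B = I$ and $\ran(D_1)\subset\nul(D_2)$. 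Thus Proposition \ref{prop:dbtobdplusdb} applies once quadratic estimates for $D_1\hat B$ and $\hat B^{-1}D_2$ are known.

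For the quadratic estimates on the building blocks: $D_1 \hat B$ is an operator of the form $D B$ with $B = \hat B$ a multiplication operator satisfying (H1-5) for $k=1$, and $\hat B^{-1}D_2$ is an operator $B D$ with $B = \hat B^{-1}$, likewise a strictly accretive multiplication operator satisfying (H1-5) with $k=1$ (note $\hat B$ strictly accretive on all of $\mH$ implies $\hat B^{-1}$ is too). For $k=1$ and multiplication $B$, hypothesis (H6) holds by Proposition \ref{pseudoloc}, so Theorem \ref{th:main} gives the quadratic estimate \eqref{eq:sfBD} for $\hat B^{-1} D_2$, and the $DB$-version (Proposition \ref{prop:typeomega2}(iv), or \eqref{eq:sfBD*}) gives it for $D_1 \hat B$. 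Feeding these into Proposition \ref{prop:dbtobdplusdb} yields bisectoriality, resolvent bounds and quadratic estimates for $D_1\hat B + \hat B^{-1}D_2$ on $L^2(\R^n;\wedge)$; conjugating by the bounded invertible $\oB$ transfers all of these to $T_B$, and injectivity of $T_B$ follows since $\nul(D_1\hat B + \hat B^{-1}D_2) = \nul(\hat B D_1)\cap\nul(D_2\hat B^{-1})$ is trivial by the ellipticity encoded in (H3) for $D_1$ and $D_2$ on $\R^n$ (there are no nonzero harmonic $L^2$ forms).

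The main obstacle I expect is step one: carefully verifying the factorization identity $T_B = \oB^{-1}(D_1\hat B + \hat B^{-1}D_2)\oB$, i.e. tracking how the non-accretive factor $M_B^{-1}$ and the operator $\ud = imd$ reorganize, after the normal/tangential block decomposition, precisely into the conjugation by $\oB$ and the replacement of $B$ by $\hat B$. This is the place where one must reproduce, at the level of operators on forms, the algebra behind Proposition \ref{prop:hatA}, and it requires being scrupulous about the (anti)commutation relations among $\mu$, $\mu^*$, $d$, $d^*$ and the block entries of $B$. Once that identity is in hand, the rest is a bookkeeping application of results already established in the paper.
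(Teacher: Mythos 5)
Your proposal follows essentially the same route as the paper: verify the similarity $T_B=\oB^{-1}(D_1\hat B+\hat B^{-1}D_2)\oB$ by the normal/tangential block computation, obtain the quadratic estimates for the blocks $\hat B^{-1}D_2$ (type $BD$) and $D_1\hat B$ (type $DB$) from Theorem \ref{th:main}, Proposition \ref{pseudoloc} and Proposition \ref{prop:typeomega2}(iv), and combine them via Proposition \ref{prop:dbtobdplusdb}. One small caveat on your injectivity step: the null space of $D_1\hat B+\hat B^{-1}D_2$ is $\hat B^{-1}\nul(D_1)\cap\nul(D_2)$ rather than $\nul(D_1)\cap\nul(D_2)$, so its triviality uses not only the absence of $L^2$ harmonic forms (which yields $\nul(D_2)=\clos{\ran(D_1)}$) but also the topological splitting $\mH=\clos{\ran(D_1\hat B)}\oplus\nul(D_1\hat B)$ from Proposition \ref{prop:typeomega2}(ii).
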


\begin{proof}
  Since $D_1,D_2$ are first order differential operators, according to Theorem \ref{th:main}, 
  Proposition~\ref{prop:dbtobdplusdb} and Proposition~\ref{pseudoloc}, 
    it suffices to verify the similarity. 
  Multiplying the equation with $BM_B$, it suffices to show that
$$
  -i(B\ud+ \ud^* B)= (BM_B) (\oB^{-1}D_1 \uB+ \uB^{-1}D_2\oB).
$$
Identifying $e_0\wedg f_1+f_2\in L^2(\R^n;\wedge \R^{1+n})$ with
$[f_1,f_2]^t\in L^2(\R^n;\wedge\R^n)^2$, the   above  definitions 
give
$-i\ud= \begin{bmatrix} 0 & d \\- d & 0\end{bmatrix}$, 
$-i\ud^*= \begin{bmatrix} 0 & d^* \\ -d^* & 0\end{bmatrix}$, 
$B M_B= \begin{bmatrix} -B_{\no\no} & 0 \\ 0 & B_{\ta\ta} \end{bmatrix}$, 
$D_1=- \begin{bmatrix} 0 & d^* \\ d & 0\end{bmatrix}$
and 
$D_2= -\begin{bmatrix} 0 & d \\ d^* & 0\end{bmatrix}$.
The similarity is now straightforward to verify.
\end{proof}

We end with the remark that it is only the part $\hat T_B$ similar to $D_1\hat B$
of the full operator $T_B$ that is needed for the application to boundary value problems
for $k$-vector fields / $k$-forms. 
This application is described in \cite[Section 6]{AAM}.
The complementary part $\hut T_B$ similar to $\hat B^{-1}D_2$ will mix $k$-vector fields
of different order $k$, but has the advantage of making the full operator $T_B$ injective.

\bibliographystyle{acm}

\end{document}